\documentclass[a4paper,11pt]{article}
\usepackage{amsfonts}
\usepackage[T1]{fontenc}
\usepackage{microtype}
\usepackage{makeidx}
\usepackage{setspace}
\usepackage{authblk}
\usepackage{epigraph}
\usepackage{graphicx}
\usepackage{xcolor}
\usepackage[all]{xy}
\usepackage{amsmath}
\usepackage{amssymb}
\usepackage{booktabs}
\usepackage{braket}
\usepackage{array}
\usepackage{tabularx}
\usepackage{multirow}
\usepackage{indentfirst}
\usepackage{cite}
\usepackage{stmaryrd}
\usepackage{faktor}
\usepackage{titling}
\usepackage{tikz}
\usepackage{dsfont}
\usepackage{url}
\usepackage{hyperref}
\usepackage{tikz-cd}
\usepackage{tkz-graph}

\usetikzlibrary{matrix,arrows,decorations.pathmorphing}
    \renewcommand{\leq}{\leqslant}
    \renewcommand{\geq}{\geqslant}
\usepackage{amsthm}
\theoremstyle{plain}
\newtheorem{thm}{Theorem}[section]
\newtheorem{dfn}[thm]{Definition}

\newtheorem{prop}[thm]{Proposition}
\newtheorem{cor}[thm]{Corollary}

\theoremstyle{definition}
\newtheorem{ex}[thm]{Example}

\theoremstyle{remark}
\newtheorem{oss}[thm]{Remark}

\DeclareMathOperator{\Lh}{\preccurlyeq_{\mathrm{L}}}

\DeclareMathOperator{\imm}{\mathrm{Im}}

\DeclareMathOperator{\Pal}{\mathrm{Pal}}

\DeclareMathOperator{\N}{\mathbb{N}}
\DeclareMathOperator{\Z}{\mathbb{Z}}

\DeclareMathOperator{\Sm}{\mathrm{Sm}}

\DeclareMathOperator{\lex}{\mathrm{lex}}

\DeclareMathOperator{\SSS}{\mathbb{S}}
\DeclareMathOperator{\BBB}{\mathbb{B}}

\title{\bf{The Lehmer complex of a Bruhat interval}}
\author{}
\author{Davide Bolognini\thanks{Dipartimento di Ingegneria Industriale e Scienze Matematiche, Università Politecnica delle Marche, Ancona, Italy.
\href{mailto:davide.bolognini.cast@gmail.com}{d.bolognini@univpm.it} } \ and Paolo Sentinelli\thanks{ Dipartimento di Matematica, Politecnico di Milano, Milan, Italy. \\ \href{mailto:paolosentinelli@gmail.com}{paolosentinelli@gmail.com}}}
\date{}
\begin{document}

\maketitle

\vspace{-3em}

    \epigraph{Quest'è una ragna \\ dove il tuo cuor \\ casca, si lagna, \\ s'impiglia e muor.}{Arrigo Boito, \emph{Otello}
    }

\vspace{1em}

\begin{abstract}
We introduce Lehmer codes for several finite Coxeter groups, including all classical Weyl groups. These codes correspond to immersions of product of chains in the Bruhat order, which allow to associate to each lower Bruhat interval of these groups a multicomplex whose $f$-polynomial is the Poincaré polynomial of the interval. Via a general construction, we prove that these polynomials are $h$-polynomials of vertex--decomposable simplicial complexes, which we call Lehmer complexes. 
Moreover we provide a classification, in terms of unimodal permutations, of Poincaré polynomials of smooth Schubert varieties in flag manifolds.
\end{abstract}
 
\section{Introduction}
In this paper we introduce the {\em Lehmer complex} of a lower Bruhat interval in a finite Coxeter system. This is a shellable simplicial complex whose $h$-polynomial is the rank--generating function of the interval. For Weyl groups, Bj\"orner and Ekedahl in \cite{bjorner} studied these polynomials by exploiting the geometry of the underlying Schubert varieties. We develop purely combinatorial tools to deal with the shape of lower Bruhat intervals, which in principle could be applied to any finite Coxeter group.

Our definition of Lehmer complexes relies on the notion of {\em Lehmer code} for a finite Coxeter group and on a general construction associating to a multicomplex a shellable simplicial complex whose $h$-vector is the $f$-vector of the multicomplex. Let $(W,S)$ be a finite Coxeter system and $e_1,\ldots,e_n$ its exponents. We define a Lehmer code for $(W,S)$ as a bijection $$L: W \rightarrow [e_1+1] \times \ldots \times [e_n+1]$$ such that $L^{-1}$ is a poset morphism between the direct product of chains $[e_1+1] \times \ldots \times [e_n+1]$ and the Bruhat order $(W,\leq)$, see Definition \ref{def codici di L}. Given a lower Bruhat interval, a Lehmer code realizes a bijection between it and an order ideal of $[e_1+1] \times \ldots \times [e_n+1]$. Since order ideals of $\N^k$ are precisely multicomplexes, this allows to consider lower Bruhat intervals as multicomplexes.

For a finite multicomplex $J$, Bj\"orner, Frankl and Stanley in \cite{bjornerstanley} introduced a shellable simplicial complex $M(J)$, which we call {\em $M$-complex}, such that the $h$-polynomial of $M(J)$ equals the $f$-polynomial of $J$, see our Definition \ref{definizione gei}. Given a Lehmer code $L$ for $(W,S)$ and  $w \in W$, the Lehmer complex $L_w(W,S)$ is the $M$-complex of the multicomplex $\{L(v)  : v \leq w\}$, see Definition \ref{def per intervalli}. 
The $h$-polynomial of $L_w(W,S)$ equals the rank--generating function of the interval $[e,w]:=\{v \in W  : v \leq w\}$, where $e$ is the identity of the group; this is stated in Theorem \ref{teorema h pol ranghi}. In Definition \ref{def complesso2} we introduce a Lehmer complex for any finite Coxeter group; in fact, to realize such complex for the whole group we do not need a Lehmer code. With respect to the length statistic, this is the analog of the Coxeter complex, whose $h$-polynomial is the generating function of the descent statistic.

In Theorem \ref{teorema maduro} we provide the following formula for the rank--generating function $h_w(q)$ of a lower Bruhat interval $[e,w]$ in terms of $q$-analogs depending on the maxima of the associated multicomplex:
$$\sum_{v\leq w} q^{\ell(v)}=\sum\limits_{\varnothing \neq X\subseteq \max\{L(v): v\leq w\}}(-1)^{|X|+1}\prod\limits_{i=1}^{|S|}[(\wedge_{x\in X}\,x)_i+1]_q.$$
By this result, we believe to be an important task to compute the maxima of the order ideal $\{L(v)  : v \leq w\}$ directly from the element $w$, without a complete knowledge of the interval. 

The previous results hold whenever a Lehmer code for a finite Coxeter system is available. The Lehmer code for a symmetric group $S_n$ is a well--known bijection between the group and $[2] \times \ldots \times [n]$ described in terms of inversions and it is involved in the combinatorics of Schubert polynomials and their generalizations, see e.g. the paper of Fulton \cite{Fulton}. This notion is compatible with the Bruhat order, in the meaning of Definition \ref{def codici di L} and this motivates our nomenclature. For groups of type $B_n$ and $D_n$, Lehmer codes were foreseen, althought not explicitly, by Stanley in \cite[Section 7]{Stanley}. In Section \ref{sezione codici espliciti} we realize explicit Lehmer codes for types $A_n$, $B_n$, $D_n$ and $H_3$ by exploiting the Coxeter structure of the groups of these types. For these classes of Coxeter systems, by the fact that $M$-sequences coincide with $f$-vectors of multicomplexes, in Proposition \ref{cor biorner} we recover part of \cite[Theorem E]{bjorner}, extending it to type $H_3$. The existence of Lehmer codes for other finite irreducible Coxeter systems remains an open problem.

In Section \ref{sezione complessi} we study $M$-complexes, collecting the tools we need to prove the previous results. In Corollary \ref{corollario ideali}, we prove that every linear extension of the order ideal $J$ provides a shelling of $M_d(J)$, recovering the fact that the rank--generating function of the order ideal $J$ coincides with the $h$-polynomial of $M_d(J)$. Proposition \ref{teo astoniscing incredibile} provides a refinement of the well--known equivalence between finite $M$-sequences and $h$-vectors of shellable simplicial complexes, see \cite[Theorem 3.3]{StaCCA}. In particular 
\begin{center}
$\{\mbox{finite $M$-sequences}\}=\{\mbox{$h$-vectors of VD simplical complexes}\}.$
\end{center}

As consequence we have that $f$-vectors of simplicial complexes are $h$-vectors of completely balanced vertex--decomposable simplicial balls. Along the way, we obtain that 
\begin{center}
$\{\mbox{$h$-vectors of flag $M$-complexes}\}=\{\mbox{$f$-vectors of flag simplicial complexes}\}.$
\end{center}
Since flag $M$-complexes are completely balanced and vertex--decomposable, this equality provides a further evidence of a conjecture due to Costantinescu and Varbaro \cite[Conjecture 1.4]{Costantinescu}.

The notion of Lehmer code $L$ for a Coxeter system $(W,S)$ leads us to consider a distinguished set of elements of $W$, which in Definition \ref{def principali} we call {\em $L$-principal elements}. Principal elements for our Lehmer codes in type $A_n$ and $B_n$ were already considered with the name of Kempf elements. They arise in relation to desingularizations of Schubert varieties in Grassmannians \cite{Huneke} and to toric degenerations of Schubert varieties in flag varieties \cite{Gonciulea}. In Definition \ref{def unimodale}, we introduce {\em $L$-unimodal elements}. Proposition \ref{prop unimodale lehmer} states that the unimodal elements of the Lehmer code $L_{A_n}$ of Section \ref{sezione tipo a} are exactly unimodal permutations. In Theorem \ref{teorema lisce} we prove that unimodal permutations classify Poincaré polynomials of smooth Schubert varieties in flag manifolds, namely in $S_n$ we have
$$\left\{\sum_{v\leq w} q^{\ell(v)}:\mbox{$w$ is smooth} \right\}  
 = \left\{\sum_{v\leq w} q^{\ell(v)}: \mbox{$w$ is unimodal} \right\}.
$$
Proposition \ref{roba da annals} provides an alternative classification in terms of partitions into different parts. Similarly, the palindromic rank--generating functions of lower Bruhat intervals in type $H_3$ are classified by unimodal elements, specifically the ones of the Lehmer code $L_{H_3}$ defined in Section \ref{lehmer H3}, see Example \ref{es palindromi H3}. 

\section{Notation and preliminaries}
In this section we fix notation and recall some definitions useful for the rest of the paper. We drop the number $0$ from the set of natural numbers, i.e. $\N:=\{1,2,3,\ldots\}$. For $n\in \mathbb{N}$, define $[n]:=\left\{1,2,\ldots,n\right\}$. For a set $X$, we denote by $|X|$ its cardinality, by $\mathcal{P}(X)$ its power set and by $X^n$ its $n$-th power under Cartesian product. If $x\in X^n$, we denote by $x_i$ the projection of $x$ on the $i$-th factor.
The $q$-analog of $n$ is a polynomial defined by $[n]_q:=\sum\limits_{i=0}^{n-1}q^i$.\\ 

{\bf Posets}. Let $(P,\leq)$ a finite poset. Given $x,y \in P$ such that $x \leq y$, define the \emph{interval} $[x,y]:=\{z \in P: x \leq z \leq y\}$. A {\em linear extension} of $P$ is a total order $x_1,\ldots,x_r$ of the elements of $P$ such that $x_i \leq x_j$ implies $i \leq j$, for all $i,j\in [r]$. Given two posets $(P,\leq_P)$ and $(Q,\leq_Q)$, a function $f:P \rightarrow Q$ is a {\em poset morphism} if $x \leq_P y$ implies $f(x) \leq_Q f(y)$. The poset $(P,\leq)$ is a {\em lattice} if the induced subposets $\{z \in P: z \leq x, z \leq y\}$ and $\{z \in P: z \geq x, z \geq y\}$ have a unique minimum $x \wedge y$ and a unique maximum $x \vee y$, respectively. Recall that the Cartesian product of posets is a poset under the componentwise order. A set $J \subseteq P$ is an {\em order ideal} of $P$ if $x \in J$ and $y \leq x$ implies $y \in J$. We say that $x \leq y$ is a {\em covering relation} $x \vartriangleleft y$ if $|[x,y]|=2$. A {\em chain} $x_0<x_1<\ldots<x_n$ of rank $n$ in $P$ is {\em saturated} if $x_0 \vartriangleleft x_1 \vartriangleleft \ldots \vartriangleleft x_n$. A saturated chain $x_0<x_1<\ldots<x_n$ of maximal rank is called {\em maximal}. If all maximal chains have the same rank $n$, the poset $P$ is said to be of rank $n$. If $P$ has maximum and minimum we say that $P$ is {\em graded}. The {\em rank function} $\rho:P \rightarrow \N$ is defined by setting $\rho(x)$ equal to the rank of the order ideal $\{y \in P: y \leq x\}$, for every $x \in P$. The {\em rank--generating function} or \emph{Poincaré polynomial} of $P$ is the polynomial $\sum_{x \in P}q^{\rho(x)}$. \\

{\bf Simplicial complexes and multicomplexes}. Let $V$ be a finite set. A {\em simplicial complex on a set of vertices $V$} is an order ideal $X \neq \varnothing$ of $(\mathcal{P}(V),\subseteq)$, whose elements are called {\em faces}. The maximal elements of $X$ are called {\em facets}. It is clear that we can identify a simplicial complex with the set of its facets.  The {\em dimension} of $X$ is $\mathrm{max}\{|F|-1: F\in X\}$. A simplicial complex is {\em pure of dimension $d$} if all its facets have the same cardinality $d+1$. 

The {\em $f$-vector} $(f_0,\ldots,f_d,f_{d+1})$ of a $d$-dimensional simplicial complex $X$ is defined by $$f_i:=|\{F \in X: |F|=i\}|,$$ for every $0 \leq i \leq d+1$. The {\em $h$-polynomial} $\sum\limits_{i=0}^{d+1}h_ix^i$ of a $d$-dimensional simplicial complex $X$ is defined by the equality

$$\sum\limits_{i=0}^{d+1}h_ix^{d+1-i}=\sum\limits_{i=0}^{d+1}f_i(x-1)^{d+1-i}.$$ 


The rank--generating function $\sum_{F \in X} q^{|F|}$ is the {\em $f$-polynomial} of $X$. The {\em $h$-vector} of $X$ is $(h_0,h_1,\ldots,h_{d+1})$. 

Given a vertex $v$ of $X$, recall that the {\em link} of $v$ is the simplicial complex $\mathrm{link}_X(v):=\{F \setminus \{v\}: F \in X, v \in F\}$ and the {\em deletion} of $v$ is the simplicial complex $\mathrm{del}_X(v):=\{F \in X: v \notin F\}.$

\vspace{1em}
A pure $d$-dimensional simplicial complex is
\begin{itemize}
\item {\em thin} if every face of cardinality $d$ is contained in exactly two facets. It is {\em subthin} if it is not thin and every face of cardinality $d$ is contained in at most two facets. 
\item {\em shellable} if there exists a {\em shelling order}, i.e. a linear order $F_1,\ldots,F_r$ on its facets such that for every $1 \leq i<j \leq r$ there exist $v \in F_j \setminus F_i$ and $h<j$ such that $|F_h \cap F_j|=d$. The {\em restriction} of a facet $F_i$ is defined by $\mathcal{R}(F_i):=\{v \in F_i: F \setminus \{v\} \in \bigcup_{j=1}^{i-1} \mathcal{P}(F_j)\}.$ Recall that for a shellable complex the $h$-vector satisfies $h_j=|\{i \in [r]: |\mathcal{R}(F_i)|=j\}|$, for all $0 \leq j \leq d+1$. In particular the $h$-vector of a shellable complex is non--negative.
\item {\em vertex--decomposable} if it is a simplex (possibly empty) or if there exists a vertex $v \in X$ satisfying the following properties: 
\begin{enumerate}
\item $\mathrm{del}_{X}(v)$ is pure, i.e. $v$ is a {\em shedding vertex};
\item $\mathrm{link}_{X}(v)$ is vertex--decomposable;
\item $\mathrm{del}_{X}(v)$ is vertex--decomposable.
\end{enumerate}
\item {\em balanced of type $a \in (\N\cup \{0\})^{k}$}, where $k \in \N$ and $\sum_{i=1}^k a_i=d+1$, if there exists a partition $V=S_1 \cup \ldots \cup S_k$ such that for every facet $F$ of $X$ we have $|F \cap S_i|=a_i$, for every $i \in [k]$. Notice that our definition is slightly different with respect to the definition given in \cite[Chapter 3]{StaCCA}, because we allow the case $a_i=0$. It is {\em completely balanced} if it is balanced of type $(1,\ldots,1)$.
\end{itemize}

Let $k \in \N$. A {\em multicomplex} is an order ideal of $\N^k$. A simplicial complex is a finite multicomplex, considered as order ideal of $[2]^k$. The {\em $f$-polynomial} of a multicomplex is its rank--generating function. The $f$-\emph{vector} of a multicomplex is the tuple of coefficients of its $f$-polynomial and it is called {\em $M$-sequence}. A numerical characterization of $M$-sequences, which coincide with Hilbert functions of a standard graded $K$-algebra, was proved by Macaulay, see \cite[Theorem 2.2]{StaCCA}. Moreover, finite $M$-sequences coincide with $h$-vectors of pure shellable simplicial complexes, see \cite[Theorem 3.3]{StaCCA}.\\ 

{\bf Finite Coxeter systems}. A finite {\em Coxeter system} $(W,S)$ is a finite group $W$ generated by a set $S$ of involutions, and relations $(st)^{m(s,t)}=e$, for all $s,t \in S$, where $e$ is the identity in $W$ and $m: S \times S \rightarrow \mathbb{N}$ is the so--called Coxeter matrix. For the classification of finite Coxeter systems we refer to \cite[Appendix A1]{BB}. 

Given $X,Y \subseteq W$ and $w\in W$, we let $XY:=\{xy: x\in X, \, y \in Y\} \subseteq W$ and $wX:=\{wx: x\in X\} \subseteq W$. Analogously we define $Xw$.

We denote by $\ell:W \rightarrow \N \cup \{0\}$ the {\em length function}. For $X\subseteq W$, we let $X(q):=\sum_{x\in X}q^{\ell(x)}$. For any $J\subseteq S$, the subgroup generated by $J$ is denoted by $W_J$ and we set $$^{J}W:=\left\{w\in W:\ell(sw)>\ell(w),\,\forall\, s\in J\right\}.$$ Any element $w\in W$ factorizes uniquely as $w=w_J({^Jw})$, where ${^Jw}\in {^JW}$, $w_J\in W_J$ and $\ell(w)=\ell(w_J)+\ell({^Jw})$. Given $I\subseteq J \subseteq S$, we set 
$$^{I}W_J:=\left\{w\in W_J:\ell(sw)>\ell(w),\,\forall\, s\in I\right\}.$$

One of the most important features of a Coxeter group is a natural partial order $\leqslant$ on it, called {\em Bruhat order}. It can be defined by the subword property (see \cite[Chapter~2]{BB}). The Bruhat order is a refinement of the so--called {\em weak orders}, see \cite[Chapter 3]{BB}. We denote by $\leq_L$ the left weak order, and by $\leq_R$ the right one. The posets $(W,\leqslant_L)$, $(W,\leqslant_R)$ and $(W,\leqslant)$ are graded with rank function $\ell$. Their minimum is the identity $e$. We denote by $w_0$ their maximum and by $w_0(J)$ the maximum of $W_J$. The {\em exponents} of a finite Coxeter system $(W,S)$ are the uniquely determined positive integers $e_1,\ldots,e_{|S|}$ such that $W(q)=\prod_{i=1}^{|S|}[e_i+1]_q$.

The {\em symmetric group} of permutations of $n$ objects is denoted by $S_n$. A permutation $\sigma \in S_n$ can be written in one line notation as $\sigma(1)\sigma(2)\ldots\sigma(n)$. The group $S_n$ is a Coxeter group; its standard Coxeter presentation has generators $S=\left\{s_1,\ldots,s_{n-1}\right\}$, where $s_i$ is the permutation $12\ldots(i+1)i\ldots n$, for all $i\in [n-1]$. The element of maximal length is $w_0=n(n-1)\ldots21$. We say that a permutation $\sigma \in S_n$ {\em avoids the pattern} $\tau \in S_k$, with $k \leq n$, if there are no $1 \leq i_1<i_2<\ldots<i_k \leq n$ such that $\sigma(i_{\tau^{-1}(1)})<\sigma(i_{\tau^{-1}(2)})<\ldots<\sigma(i_{\tau^{-1}(k)})$. Recall that the Bruhat order on $S_n$ represents the inclusions between Schubert varieties in the flag variety. It is well--known that smooth Schubert varieties corresponds to permutations avoiding the patterns $3412$ and $4231$. 

\section{Multicomplexes and M--complexes} \label{sezione complessi}

In this section, given a finite multicomplex, we study a vertex--decomposable simplicial complex, which we call $M$-complex, whose $h$-vector equals the $f$-vector of the multicomplex. This construction was introduced by Bj\"orner, Frankl and Stanley in \cite[Section 5]{bjornerstanley} to prove the equivalence between finite $M$-sequences and $h$-vectors of shellable simplicial complexes. The same construction was further explored by Murai in \cite{Murai}, with a completely different approach. As we show in subsequent sections, $h$-polynomials of $M$-complexes realize the rank--generating functions of lower Bruhat intervals in several Coxeter groups.

Let $k\in \N$ and $d:=(d_1,\ldots,d_k)\in \N^k$. We shall define a pure simplicial complex $M_d$ of dimension $$\mathrm{dim}(M_d)=\sum\limits_{i=1}^k d_i-k-1$$
with number of facets equal to $\prod_{i=1}^k d_i$. 
For $x\in [d_1] \times \ldots \times [d_k]$ we define a set 
\begin{equation} \label{eq faccette}
    F_x:=\bigcup\limits_{i=1}^k \left([d_i]\setminus \{d_i+1-x_i\}\right)\times \{i\}.
\end{equation}
For example, if $k=3$, $d_1=d_2=3$ and $d_3=4$, we have that
$$F_{(1,2,2)}=\{(1,1),(2,1),(1,2),(3,2),(1,3),(2,3),(4,3)\}.$$
\begin{dfn}\label{def complesso} Let $k\in \N$ and $d \in \N^k$. We call the simplicial complex $$M_d:=\left\{F_x: x \in [d_1] \times \ldots \times [d_k]\right\}$$ the \emph{$M$-complex of degrees} $d_1,\ldots,d_k$.
\end{dfn} 
Notice that if $d=(1,1,\ldots,1)$ then $M_d=\{\varnothing\}$.
We call the numbers  $e_i:=d_i-1$ the \emph{exponents} of $M_d$, for all $i\in [k]$. Clearly the simplicial complex of Definition \ref{def complesso} is pure, balanced of type $(e_1,\ldots,e_k)$, and it has the announced dimension and number of facets. 

We consider the componentwise order on the set $[d_1]\times \ldots \times [d_k]$. Its rank function is denoted by $\rho: [d_1]\times \ldots \times [d_k] \rightarrow \N$, i.e. $x\mapsto \rho(x):=\sum_{i=1}^k(x_i-1)$. We prove now that all linear extensions of this poset are shellings for the $M$-complex $M_d$. 

\begin{thm}\label{teorema shell}
  Let $M_d$ be the $M$-complex of degrees $d_1,\ldots,d_k$ and $\omega:=\prod_{i=1}^kd_i$ the number of facets of $M_d$. If $x_1,\ldots,x_\omega$ is a linear extension of  $[d_1]\times \ldots \times [d_k]$, then $F_{x_1},\ldots,F_{x_\omega}$ is a shelling order for $M_d$.  
\end{thm}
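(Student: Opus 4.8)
The plan is to verify the shelling condition directly from the explicit description of the facets in \eqref{eq faccette}. First I would observe that for $x \in [d_1]\times\ldots\times[d_k]$, each facet $F_x$ meets the block $[d_i]\times\{i\}$ in exactly $d_i-1$ elements, namely all of $[d_i]\times\{i\}$ except the single vertex $(d_i+1-x_i,i)$. Thus a facet $F_x$ is determined by, and determines, the tuple of ``missing vertices'' $(d_i+1-x_i,i)_{i\in[k]}$, and hence by $x$ itself; in particular the map $x \mapsto F_x$ is a bijection and $M_d$ has $\omega = \prod_i d_i$ facets, all of cardinality $\sum_i(d_i-1)$. Given two facets $F_x$ and $F_y$, their intersection omits a vertex of block $i$ precisely when $x_i \ne y_i$, so $|F_x \cap F_y| = \dim(M_d) + 1 - |\{i : x_i \ne y_i\}|$; in particular $|F_x\cap F_y|=\dim(M_d)$ exactly when $x$ and $y$ differ in a single coordinate.

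Next I would unwind what must be shown: for a linear extension $x_1,\ldots,x_\omega$ and indices $i<j$, I need a vertex $v\in F_{x_j}\setminus F_{x_i}$ and an index $h<j$ with $|F_{x_h}\cap F_{x_j}|=\dim(M_d)$, i.e. with $x_h$ differing from $x_j$ in exactly one coordinate. Since $x_i\ne x_j$, pick any coordinate $m$ with $(x_i)_m\ne(x_j)_m$. The vertex $v:=(d_m+1-(x_j)_m,\,m)$ lies in $F_{x_i}$ (it is the omitted vertex of block $m$ for $F_{x_i}$ only when $(x_i)_m=(x_j)_m$, which fails) — wait, I should be careful about the direction: $v$ is \emph{omitted} from $F_{x_j}$, so in fact I should instead take the vertex that is present in $F_{x_j}$ but absent from $F_{x_i}$. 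That vertex is $v:=(d_m+1-(x_i)_m,\,m)$: it is the one omitted from $F_{x_i}$ in block $m$, hence $v\notin F_{x_i}$, while $v\in F_{x_j}$ because $(x_j)_m\ne(x_i)_m$ means $v$ is not the omitted vertex of block $m$ for $F_{x_j}$. Now define $x_h$ to agree with $x_j$ in every coordinate except coordinate $m$, where I set $(x_h)_m:=(x_i)_m$. Then $x_h\in[d_1]\times\ldots\times[d_k]$, it differs from $x_j$ in exactly one coordinate (so $|F_{x_h}\cap F_{x_j}|=\dim(M_d)$ as computed above), and crucially $x_h\leq x_i$ in the componentwise order: it agrees with $x_i$ in coordinate $m$ and in every other coordinate $t$ I have $(x_h)_t=(x_j)_t$, for which I must check $(x_j)_t\leq(x_i)_t$ — which need not hold in general. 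So the construction of $x_h$ must be refined: instead of copying $x_j$ off coordinate $m$, I should take the \emph{meet} $x_h:=x_i\wedge x_j$ adjusted, or more simply choose $m$ and set $x_h$ to equal $x_i$ on coordinate $m$ and $\min\{(x_i)_t,(x_j)_t\}$ elsewhere — but then $x_h$ may differ from $x_j$ in more than one coordinate.

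The correct and clean choice is: let $m$ be a coordinate with $(x_j)_m\ne(x_i)_m$, and let $x_h$ be obtained from $x_i$ by replacing its $m$-th coordinate with $(x_j)_m$ — no, that also fails to be $\leq x_i$. The resolution, which is the one subtle point of the argument, is to instead \emph{induct} or to pick $m$ so that $(x_j)_m<(x_i)_m$: such $m$ exists whenever $x_j\not\leq x_i$, in particular whenever $x_i$ precedes $x_j$ in a linear extension and $x_i\ne x_j$ (if $x_j\leq x_i$ then since $x_i$ comes first we'd need $x_i\leq x_j$ too by the linear-extension property applied carefully — actually one shows directly that in a linear extension an earlier element is never strictly above a later one). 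Then set $x_h$ to agree with $x_j$ on coordinate $m$ and with $x_i$ on all other coordinates: this gives $x_h\leq x_i$ (coordinatewise, using $(x_j)_m<(x_i)_m$), hence $x_h=x_{h'}$ for some $h'<i<j$ by the linear-extension property... but $x_h$ may differ from $x_j$ in several coordinates. At this point I would pass to the facet-level restriction argument: rather than finding a single $x_h$, I iterate, changing one coordinate of $x_i$ at a time toward $x_j$ while staying $\leq x_i$ at each intermediate step whenever the changed coordinate decreases, and when it would increase I route through $x_i\wedge x_j$. Concretely, the key lemma is that $x_i\wedge x_j$ (the componentwise minimum) satisfies $x_i\wedge x_j\leq x_i$, appears earlier than $x_j$, and $F_{x_i\wedge x_j}\cap F_{x_j}$ omits exactly the blocks where $x_i$ and $x_j$ disagree — so by connecting $x_i\wedge x_j$ to $x_j$ through a saturated chain in the product poset (each step changing one coordinate, each intermediate element still $\leq x_j$ hence possibly not yet shelled, but still $\leq$ some already-shelled element), one produces the required $h$.

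The main obstacle, then, is exactly this bookkeeping: choosing the intermediate facet $F_{x_h}$ so that simultaneously (a) $x_h$ differs from $x_j$ in precisely one coordinate, giving the codimension-one intersection, and (b) $x_h$ precedes $x_j$ in the given linear extension. I expect the cleanest route is to fix a coordinate $m$ on which $x_i$ and $x_j$ disagree, form $x_h$ by taking $(x_h)_m = (x_j)_m$ and $(x_h)_t = (x_i)_t$ for $t\ne m$, and then argue that $x_h$ appears before $x_j$ in the linear extension not because $x_h\leq x_i$ but because $x_h$ lies below $x_j$ \emph{together with} a separate comparison — or, failing a slick argument, to prove by induction on $|\{t:(x_i)_t\ne(x_j)_t\}|$ that such an $h$ exists, the base case being the single-coordinate difference where $x_h$ can be taken to be $x_i$ itself. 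Everything else — the facet sizes, the dimension formula, the intersection cardinality formula, the identification of the omitted vertex $v$ — is routine bookkeeping from \eqref{eq faccette}.
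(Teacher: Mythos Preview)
You have all the right ingredients but keep comparing your candidate to the wrong element. Your very first construction was already the correct one: pick a coordinate $m$ on which $x_i$ and $x_j$ disagree, and set $z$ to agree with $x_j$ everywhere except in coordinate $m$, where $z_m:=(x_i)_m$. The point you miss is that you do not need $z\leq x_i$; you only need $z$ to precede $x_j$ in the linear extension, and for that it suffices that $z<x_j$ in the componentwise order. This holds automatically \emph{provided} you choose $m$ with $(x_i)_m<(x_j)_m$. Such an $m$ always exists: since $i<j$ in a linear extension we have $x_j\not\leq x_i$ (otherwise $x_j<x_i$ would force $j<i$), and $x_j\not\leq x_i$ means precisely that some coordinate of $x_j$ strictly exceeds the corresponding coordinate of $x_i$. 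With this choice of $m$, the element $z$ satisfies $z<x_j$, differs from $x_j$ in the single coordinate $m$, and the vertex $v=(d_m+1-(x_i)_m,m)$ you already correctly identified satisfies $F_{x_j}\setminus F_z=\{v\}\subseteq F_{x_j}\setminus F_{x_i}$. This is exactly the paper's proof.

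Your later attempts go astray for two reasons. First, you reverse the inequality, looking for $m$ with $(x_j)_m<(x_i)_m$; that is what $x_i\not\leq x_j$ would give, and it need not hold (e.g.\ when $x_i<x_j$). Second, you switch to copying $x_i$ off coordinate $m$ instead of $x_j$, which destroys the single--coordinate--difference property you need for the codimension-one intersection. The inductive workaround you sketch at the end could be made to succeed, but it is unnecessary once you compare $z$ to $x_j$ rather than to $x_i$.
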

\begin{proof} 
If $k=1$ the result is straightforward from the definition of $M_d$.  So assume $k>1$. Let $x_1,\ldots,x_\omega$ be a linear extension of $[d_1]\times \ldots \times [d_k]$; in particular $x_1=(1,1,\ldots,1)$.  Let $i,j\in [\omega]$ be such that $1\leq i <j$.
Then we have that either $x_i<x_j$ or there exist $a,b\in [k]$ such that $(x_i)_a>(x_j)_a$ and $(x_i)_b<(x_j)_b$. In both cases there exists $h\in [k]$ such that $(x_i)_h<(x_j)_h$. Notice that $(d_h+1-(x_i)_h,h)\in F_{x_j}\setminus F_{x_i}$. Let $z:=((x_j)_1,\ldots,(x_j)_{h-1},(x_i)_h,(x_j)_{h+1},\ldots, (x_j)_k)$. Since $(x_i)_h<(x_j)_h$, we have that $z<x_j$; hence there exists $r<j$ such that $z=x_r$. 
Moreover $F_{x_j}\setminus F_{x_r}=\{ (d_h+1-(x_i)_h,h) \}$. This proves our statement.    
\end{proof}

The simplicial complexes of the next definition are, mutatis mutandis, the ones introduced in \cite[Section 5]{bjornerstanley}.

\begin{dfn} \label{definizione gei}
Let $k \in \N$ e $d \in \N^k$. For any subset $\varnothing \neq J\subseteq [d_1]\times \ldots \times [d_k]$, the $M$-complex of $J$ is 
  $M_d(J):=\{F_x: x\in J\} \subseteq M_d$, where the facet $F_x$ is defined in \eqref{eq faccette}.
\end{dfn} Recall that multicomplexes are precisely order ideals of $\mathbb{N}^k$, for some $k \in \N$. Hence, finite multicomplexes are order ideals of $[d_1] \times \ldots \times [d_k]$, for some $d_1,\ldots,d_k \in \mathbb{N}$. In the next result we prove that the $h$-polynomial of $M_d(J)$ is the rank--generating function of the order ideal $J$. 

\begin{cor} \label{corollario ideali} Let $\varnothing \neq J\subseteq [d_1]\times \ldots \times [d_k]$ be an order ideal. Then any linear extension of $J$ provides a shelling order of $M_d(J)$. Moreover, the polynomial  
    $\sum_{x\in J}q^{\rho(x)}$ is the $h$-polynomial of the complex $M_d(J)$.
\end{cor}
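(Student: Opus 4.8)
The plan is to deduce the corollary from Theorem \ref{teorema shell} by checking that the extra structure of an order ideal $J$ forces the shelling of $M_d$ restricted to the facets $\{F_x : x \in J\}$ to be a genuine shelling of $M_d(J)$, and then to read off the $h$-polynomial from the restriction sets. First I would observe that if $J$ is an order ideal, then a linear extension $x_1,\ldots,x_m$ of $J$ can be completed to a linear extension of the whole product $[d_1]\times\ldots\times[d_k]$, simply by listing the remaining elements in any order compatible with the componentwise order (this is possible precisely because $J$ is a down-set). By Theorem \ref{teorema shell}, the resulting sequence $F_{x_1},\ldots,F_{x_\omega}$ is a shelling order of $M_d$. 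The key point is that the element $z=x_r$ produced in that proof, for a pair $i<j$ with both $x_i,x_j\in J$, satisfies $z<x_j$ and hence $z\in J$ as well; so the facet $F_{x_r}$ witnessing the shelling condition already lies among $F_{x_1},\ldots,F_{x_m}$. Therefore the initial segment $F_{x_1},\ldots,F_{x_m}$ is itself a shelling order of the subcomplex $M_d(J)$ (note $M_d(J)$ is pure of the same dimension, being a union of facets of $M_d$).

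Next I would compute the $h$-polynomial. Since $F_{x_1},\ldots,F_{x_m}$ is a shelling of $M_d(J)$, the $h$-vector is given by $h_j=|\{r\in[m] : |\mathcal{R}(F_{x_r})|=j\}|$, where $\mathcal{R}(F_{x_r})$ is the restriction face. I would then identify $|\mathcal{R}(F_{x_r})|$ with $\rho(x_r)=\sum_{i=1}^k((x_r)_i-1)$. The cleanest way to see this is to revisit the proof of Theorem \ref{teorema shell}: for each coordinate $h$ with $(x_r)_h>1$, taking any earlier element (e.g. the one obtained by lowering the $h$-th coordinate, which precedes $x_r$ since $J$ is an ideal and $x_1$ is the bottom) shows that the vertex $(d_h+1-(x_r)_h,\,h)$ belongs to $\mathcal{R}(F_{x_r})$; conversely, for coordinates $h$ with $(x_r)_h=1$ the corresponding "missing" vertex $(d_h, h)$ is not removed. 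Thus $\mathcal{R}(F_{x_r})=\{(d_h+1-(x_r)_h,h) : (x_r)_h>1\}$, which has cardinality exactly $\rho(x_r)$. Consequently $\sum_j h_j x^j = \sum_{r=1}^m x^{\rho(x_r)} = \sum_{x\in J} x^{\rho(x)}$, as claimed.

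The main obstacle I anticipate is the bookkeeping in the restriction-set computation: one must be careful that $\mathcal{R}(F_{x_r})$ is computed inside the subcomplex $M_d(J)$ rather than inside $M_d$, and a priori removing a vertex could create a face not contained in any $\mathcal{P}(F_{x_s})$ with $s<r$ for the subcomplex even if it is for $M_d$. This is exactly where the ideal hypothesis is essential: the specific witness facet $F_{x_s}$ with $s<r$ produced in Theorem \ref{teorema shell}, coming from an element below $x_r$ in the product order, lies in $J$ and hence in $M_d(J)$, so the restriction sets of $M_d(J)$ agree with those of $M_d$ on the facets indexed by $J$. Once this compatibility is nailed down, the identification $|\mathcal{R}(F_{x_r})|=\rho(x_r)$ and the final equality of polynomials are immediate. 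I would also remark, if not already noted, that this recovers the statement that finite $M$-sequences are $h$-vectors of shellable complexes, since $\sum_{x\in J}q^{\rho(x)}$ is by definition the $f$-polynomial of the multicomplex $J$.
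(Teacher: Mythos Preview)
Your argument for the first part (shelling) is correct and essentially matches the paper's: extend a linear extension of $J$ to one of the whole product, apply Theorem~\ref{teorema shell}, and note that the witness element $z$ produced there satisfies $z<x_j$ and hence lies in the order ideal $J$.

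For the second part, the paper simply cites \cite[Theorem 1.14(i)]{Murai}, whereas you attempt a direct computation of the restriction sets. The strategy is sound, but your identification of $\mathcal{R}(F_{x_r})$ is wrong. You write
\[
\mathcal{R}(F_{x_r})=\{(d_h+1-(x_r)_h,\,h) : (x_r)_h>1\},
\]
but the vertex $(d_h+1-(x_r)_h,\,h)$ is precisely the one \emph{omitted} from $F_{x_r}$ by definition, so it cannot belong to the restriction. Moreover, that set has cardinality $|\{h:(x_r)_h>1\}|$, which is not $\rho(x_r)=\sum_h((x_r)_h-1)$ in general (they agree only when every coordinate of $x_r$ is at most $2$). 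The correct computation is: for each coordinate $h$ and each $c$ with $1\le c<(x_r)_h$, the element obtained from $x_r$ by replacing the $h$-th coordinate with $c$ is strictly below $x_r$, hence lies in $J$ and precedes $x_r$ in the linear extension; removing the vertex $(d_h+1-c,\,h)$ from $F_{x_r}$ lands in that earlier facet. Conversely, any facet $F_{x_s}$ with $s<r$ that meets $F_{x_r}$ in codimension one must come from an element differing from $x_r$ in a single coordinate and lying strictly below it. Hence
\[
\mathcal{R}(F_{x_r})=\bigcup_{h=1}^{k}\bigl\{(a,h): d_h+2-(x_r)_h\le a\le d_h\bigr\},
\]
which indeed has cardinality $\sum_h((x_r)_h-1)=\rho(x_r)$. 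With this correction your direct argument goes through and gives a self-contained alternative to the citation of Murai.
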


\begin{proof} Since order ideals are partial linear extensions, the first part is an immediate consequence of Theorem \ref{teorema shell}. The second part of the statement is essentially \cite[Theorem 1.14 (i)]{Murai}.\end{proof}
Let $|J|>1$. It is easy to see that the complex $M_d(J)$ is thin if and only if $J=[d_1]\times \ldots \times [d_k]$; in the other cases it is subthin. From this fact, see \cite[Appendix A2]{BB}, we deduce the following corollary, see also \cite[Lemma 1.4 (i) and (iii)]{Murai}. 
\begin{cor} \label{cor sfere}
    Let $J\subseteq [d_1]\times \ldots \times [d_k]$ be an order ideal such that $|J|>1$. Then the geometric realization of $M_d(J)$ is $PL$-homeomorphic to 
\begin{enumerate}
    \item the sphere ${\SSS}^{\mathrm{dim}(M_d)}$, if $J=[d_1]\times \ldots \times [d_k]$;
    \item the ball ${\BBB}^{\mathrm{dim}(M_d)}$, otherwise.
\end{enumerate}
\end{cor}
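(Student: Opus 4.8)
The plan is to reduce the statement to the classification of shellable pseudomanifolds. By Theorem \ref{teorema shell} (via Corollary \ref{corollario ideali}), the complex $M_d(J)$ is shellable, hence in particular it is a pure, shellable $(d-1)$-dimensional simplicial complex where $d=\sum_{i=1}^k d_i-k$. The key combinatorial input is the dichotomy announced just before the statement: when $|J|>1$, the complex $M_d(J)$ is \emph{thin} precisely when $J=[d_1]\times\ldots\times[d_k]$ (so $M_d(J)=M_d$), and it is \emph{subthin} otherwise. First I would verify this dichotomy carefully, since everything else rests on it: given a ridge (codimension-one face) $G$ of $M_d(J)$, one counts the facets $F_x\supseteq G$. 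Writing $G=F_x\setminus\{(d_h+1-x_h,h)\}$ for the essentially unique way a facet contains a given ridge, the facets above $G$ correspond to those $x'\in J$ agreeing with $x$ in all coordinates except possibly the $h$-th, and with $(d_h+1-x'_h,h)\notin G$; there are at most two such, and exactly two iff both the relevant values of the $h$-th coordinate lie in $J$. One then checks that every ridge lies in exactly two facets iff $J$ is the full box, and otherwise some ridge lies in only one.

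Granting the dichotomy, I would invoke the standard topological classification of shellable complexes (the reference \cite[Appendix A2]{BB} is cited in the excerpt): a shellable pure $(d-1)$-dimensional complex that is thin is PL-homeomorphic to the sphere $\SSS^{d-1}$, and one that is subthin is PL-homeomorphic to the ball $\BBB^{d-1}$. Combining this with the dichotomy yields exactly the two cases of the statement, with $\mathrm{dim}(M_d)=d-1=\sum_{i=1}^k d_i-k-1$ as recorded in Section \ref{sezione complessi}. This is, essentially verbatim, the argument referenced via \cite[Lemma 1.4]{Murai}, so an alternative would be simply to cite Murai; but giving the thin/subthin reduction keeps the section self-contained.

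The main obstacle is the ridge-counting in the thin/subthin dichotomy, and in particular making precise the claim that a given ridge of $M_d$ determines the index $h$ (the factor in which the "missing" element sits) together with all the coordinates of $x$ except $x_h$. This needs the balanced structure of $M_d$: since $M_d(J)$ is balanced of type $(e_1,\ldots,e_k)$ with $e_i=d_i-1$, a facet $F_x$ meets the block $[d_i]\times\{i\}$ in exactly $d_i-1$ elements, so a ridge $G$ misses the full block in exactly one coordinate $h$ and misses $[d_h]\times\{h\}$ in exactly \emph{two} elements, one of which must be $(d_h+1-x_h,h)$. From this the facets containing $G$ are exactly the $F_{x'}$ with $x'$ obtained from $x$ by changing $x_h$ to the value corresponding to the other missing element of block $h$ — at most one alternative, present iff that modified tuple still lies in $J$. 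Once this bookkeeping is set up, the equivalence "thin $\iff$ $J$ is the full box" falls out: if $J\subsetneq[d_1]\times\ldots\times[d_k]$ pick a maximal $x\in J$ that is not the top of the box, witnessing a ridge in a single facet; conversely if $J$ is the full box every modified tuple is legal.
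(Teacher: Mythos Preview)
Your proposal is correct and follows exactly the paper's approach: the paper states the thin/subthin dichotomy as ``easy to see'' and then cites \cite[Appendix A2]{BB} (and \cite[Lemma 1.4]{Murai}) for the sphere/ball classification of shellable complexes, which is precisely what you do. Your ridge-counting argument is a faithful expansion of that ``easy to see'' step, using the balanced structure to pin down the block $h$ and observing that the second facet over a ridge is $F_{x+e_h}$, present in $M_d(J)$ iff $x+e_h\in J$.
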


It is well--known that a vertex--decomposable simplicial complex is shellable. If $J \subseteq  [d_1]\times \ldots \times [d_k]$ is an order ideal, it is easy to prove that the simplicial complex $M_d(J)$ has this stronger property, see also \cite[Remark 1.8]{Murai}. 

\begin{prop} \label{teorema VD}
Let $\varnothing \neq J\subseteq [d_1]\times \ldots \times [d_k]$ be an order ideal. Then $M_d(J)$ is vertex--decomposable.
\end{prop}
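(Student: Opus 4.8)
The plan is to prove vertex--decomposability by induction on $k$ and on $|J|$, using the recursive structure of the complexes $M_d(J)$ with respect to the vertices lying in the last ``color class'' $[d_k] \times \{k\}$. Recall from \eqref{eq faccette} that each facet $F_x$ meets $[d_k]\times\{k\}$ in the set $[d_k]\setminus\{d_k+1-x_k\}$, i.e.\ it omits exactly one vertex of this class, the vertex determined by the $k$-th coordinate of $x$. This is the key geometric feature that makes the induction run.

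First I would isolate the natural shedding vertex. Write $J_{\max}$ for the maximum value $m$ of $x_k$ occurring among $x \in J$, and consider the vertex $v:=(d_k+1-m,k)$. I claim $v$ is a shedding vertex of $M_d(J)$. The facets of $M_d(J)$ that contain $v$ are exactly those $F_x$ with $x_k \neq m$; since $J$ is an order ideal, for such $x$ we may decrease the $k$-th coordinate freely, and one checks that $\mathrm{del}_{M_d(J)}(v) = M_d(J')$ where $J' := \{x \in J : x_k \neq m\}$... more precisely one wants $\mathrm{del}_{M_d(J)}(v)$ to equal $M_{d}(J)$ restricted to facets avoiding $v$; since every $F_x$ omits a \emph{unique} vertex of $[d_k]\times\{k\}$, the facets avoiding $v$ are precisely the $F_x$ with $x_k=m$, which form the complex $M_d(\{x\in J: x_k=m\})$ --- and because $J$ is an order ideal this is isomorphic to a copy of $M_{d'}(J'')$ for $d'=(d_1,\dots,d_{k-1})$ and $J''$ an order ideal in $[d_1]\times\dots\times[d_{k-1}]$, so it is pure and vertex--decomposable by induction on $k$. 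For the link, $\mathrm{link}_{M_d(J)}(v)$ consists of $F_x\setminus\{v\}$ over $x\in J$ with $x_k\neq m$; identifying these $x$ with elements of the order ideal $\{x\in J : x_k\neq m\}$ (still an order ideal, with $|J|$ strictly smaller unless $m=1$, in which case $J=\{x: x_k=1\}$ and the base of the last color is trivial), one sees that the link is again an $M$-complex $M_{\tilde d}(\tilde J)$ for suitable $\tilde d$ and order ideal $\tilde J$, hence vertex--decomposable by induction.

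The remaining point is purity of the deletion: $\mathrm{del}_{M_d(J)}(v) = M_d(J)\restriction\{F_x : x_k = m\}$ must be pure of the full dimension $\dim M_d(J) = \sum d_i - k - 1$, which is clear since every $F_x$ has the same cardinality $\sum_i(d_i-1)$. One also has to handle the base cases: $k=1$, where $M_d(J)$ is a simplex boundary or a union of simplices along a common facet and is checked directly; and $|J|=1$, where $M_d(J)$ is a single simplex, hence vertex--decomposable by definition. Organizing the double induction (on $k$, and for fixed $k$ on $|J|$) so that each of the two recursive calls --- one lowering $k$, one lowering $|J|$ --- lands on a strictly smaller instance is the only bookkeeping subtlety.

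The main obstacle I anticipate is not conceptual but notational: verifying cleanly that both $\mathrm{del}_{M_d(J)}(v)$ and $\mathrm{link}_{M_d(J)}(v)$ are themselves $M$-complexes of \emph{order ideals} (in possibly fewer coordinates, or with a truncated last coordinate), so that the inductive hypothesis applies. Concretely one must pin down the identifications $\{x \in J : x_k = m\} \cong$ an order ideal in $[d_1]\times\dots\times[d_{k-1}]$ and $\{x \in J : x_k < m\} \cong$ an order ideal in $[d_1]\times\dots\times[d_{k-1}]\times[m-1]$, and check that the facet map $F_{(\cdot)}$ of Definition~\ref{def complesso} is compatible with these. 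Once that dictionary is set up, the three defining conditions of vertex--decomposability follow immediately.
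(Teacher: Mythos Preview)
Your inductive strategy is the right one, and it is essentially what the paper has in mind (the paper does not give a proof, only the remark that it is ``easy'' together with a reference to Murai). However, your choice of shedding vertex is wrong, and the argument breaks at the purity step.

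Take $k=2$, $d=(2,2)$, and $J=\{(1,1),(2,1),(1,2)\}$. Here $m=\max\{x_2:x\in J\}=2$, so your vertex is $v=(d_2+1-m,2)=(1,2)$. The facets are $F_{(1,1)}=\{(1,1),(1,2)\}$, $F_{(2,1)}=\{(2,1),(1,2)\}$, $F_{(1,2)}=\{(1,1),(2,2)\}$, and only the last one avoids $v$. But $\{(2,1)\}\subseteq F_{(2,1)}$ is a face of $M_d(J)$ not containing $v$, and it is \emph{not} contained in $F_{(1,2)}$. Hence $\mathrm{del}_{M_d(J)}(v)$ has maximal faces $\{(2,1)\}$ and $\{(1,1),(2,2)\}$ of different dimensions: $v$ is not a shedding vertex. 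The underlying reason is that you need to extend an arbitrary $x\in J$ to some $y\in J$ with $y_k=m$ by \emph{increasing} the last coordinate, and an order ideal gives you no control in that direction.

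The fix is to take the vertex at the other end: set $v:=(d_k,k)$, the vertex omitted precisely by the facets with $x_k=1$. Then for any face $G\subseteq F_x$ with $v\notin G$ one has $G\subseteq F_{(x_1,\dots,x_{k-1},1)}$, and $(x_1,\dots,x_{k-1},1)\leq x$ lies in $J$ because $J$ is an order ideal; so $\mathrm{del}(v)$ is pure with facet set $\{F_x:x_k=1\}$, a cone over $M_{(d_1,\dots,d_{k-1})}(J')$ for the projected ideal $J'$. The link is $\{F_x\setminus\{v\}:x_k\geq 2\}$, which after the shift $x_k\mapsto x_k-1$ identifies with $M_{(d_1,\dots,d_{k-1},d_k-1)}(\tilde J)$ for the order ideal $\tilde J=\{(x_1,\dots,x_{k-1},x_k-1):x\in J,\ x_k\geq 2\}$. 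With this correction your double induction goes through exactly as you outlined.
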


By the previous results and the characterization of $M$-sequences as $f$-vectors of multicomplexes, non--zero finite $M$-sequences are exactly the $h$-vectors of $M$-complexes. More precisely, the following result holds.

\begin{prop}\label{teo astoniscing incredibile}
Given a polynomial $P \neq 0$, the following are equivalent:
    \begin{enumerate}
        \item[$\mathrm{1)}$] the coefficients of $P$ form an $M$-sequence.
        \item[$\mathrm{2)}$] $P$ is the $h$-polynomial of a vertex--decomposable simplicial ball.
    \end{enumerate}
\end{prop}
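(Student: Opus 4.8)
The plan is to prove the equivalence by closing a loop of implications through the $M$-complex machinery already developed. For $\mathrm{1)} \Rightarrow \mathrm{2)}$: suppose the coefficients $(h_0, h_1, \ldots, h_e)$ of $P$ form an $M$-sequence. By Macaulay's theorem (cited as \cite[Theorem 2.2]{StaCCA}), a nonzero finite $M$-sequence is exactly the $f$-vector of a finite multicomplex, so there exists an order ideal $\varnothing \neq J \subseteq \N^k$ for some $k$ with $f$-polynomial equal to $P$. Choosing $d_1, \ldots, d_k$ large enough that $J \subseteq [d_1] \times \ldots \times [d_k]$, we form $M_d(J)$. Corollary \ref{corollario ideali} tells us that its $h$-polynomial is $\sum_{x \in J} q^{\rho(x)}$, which is precisely the $f$-polynomial of $J$, hence equals $P$. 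Proposition \ref{teorema VD} says $M_d(J)$ is vertex--decomposable, and Corollary \ref{cor sfere} identifies its geometric realization as a PL ball when $|J| > 1$ (and when $|J| = 1$, i.e. $J = \{(1,\ldots,1)\}$, $M_d(J)$ is a single simplex — possibly $\{\varnothing\}$ — which is trivially a vertex--decomposable ball with $h$-polynomial $1$). So $P$ is the $h$-polynomial of a vertex--decomposable simplicial ball.

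For $\mathrm{2)} \Rightarrow \mathrm{1)}$: suppose $P$ is the $h$-polynomial of a vertex--decomposable simplicial ball $X$. A vertex--decomposable complex is shellable, and for a shellable complex the $h$-vector is nonnegative with $h_0 = 1$; moreover, by \cite[Theorem 3.3]{StaCCA}, the $h$-vector of a (pure) shellable simplicial complex is an $M$-sequence. Here I should note that vertex--decomposable complexes are pure by definition in the excerpt's formulation (the shedding vertex condition forces $\mathrm{del}_X(v)$ to be pure, and inductively the link and deletion are pure of the right dimensions), so the cited theorem applies directly and the coefficients of $P$ form an $M$-sequence. That closes the equivalence.

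The main subtlety — really the only place requiring care rather than citation — is the degenerate boundary case and the matching of dimensions/normalizations. One must check that the $h$-polynomial as defined in the preliminaries (via $\sum h_i x^{d+1-i} = \sum f_i (x-1)^{d+1-i}$) genuinely coincides, after the substitution $x \mapsto q$, with the rank--generating function $\sum_{x\in J} q^{\rho(x)}$ for every order ideal $J$, including $|J|=1$; this is exactly the content of Corollary \ref{corollario ideali} combined with the observation that $\rho$ ranges over $\{0, 1, \ldots, \sum(x_i - 1)\}$, so no off-by-one issue arises. I expect this bookkeeping, together with handling $J = \{(1,\ldots,1)\}$ separately so that "ball" is interpreted as "simplex" in the trivial case, to be the one point worth spelling out; everything else is an immediate assembly of Corollary \ref{corollario ideali}, Corollary \ref{cor sfere}, Proposition \ref{teorema VD}, Macaulay's theorem, and \cite[Theorem 3.3]{StaCCA}.
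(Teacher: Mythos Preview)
Your proposal is correct and follows essentially the same route as the paper's proof: both directions are handled by citing Macaulay's characterization, Corollary~\ref{corollario ideali}, Proposition~\ref{teorema VD}, and Corollary~\ref{cor sfere} for $\mathrm{1)}\Rightarrow\mathrm{2)}$, and the implication vertex--decomposable $\Rightarrow$ shellable $\Rightarrow$ $M$-sequence for $\mathrm{2)}\Rightarrow\mathrm{1)}$. If anything, you are slightly more careful than the paper in treating the degenerate case $|J|=1$ (i.e.\ $P=1$), which the paper leaves implicit since Corollary~\ref{cor sfere} is stated only for $|J|>1$; your observation that a single simplex is already a vertex--decomposable ball closes that tiny gap.
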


\begin{proof}
    It is clear that $\mathrm{2)}$ implies $\mathrm{1)}$, since a vertex--decomposable simplicial complex is shellable. 
    Now we prove the other implication. Let $P$ be a polynomial whose coefficients form an $M$-sequence. It is the $f$-polynomial of a finite multicomplex, i.e. the rank--generating function of an order ideal $J \subsetneq [d_1] \times \ldots [d_k]$ for some $d=(d_1,\ldots,d_k) \in \mathbb{N}^k$. Consider the $M$-complex $M_d(J)$. By Corollary \ref{corollario ideali}, the $h$-polynomial of $M_d(J)$ is $P$. By construction $M_d(J)$ is balanced of type $(e_1,\ldots,e_k)$ and by Proposition \ref{teorema VD} it is vertex--decomposable. By Corollary \ref{cor sfere}, $M_d(J)$ is a simplicial ball, because $J \neq [d_1] \times \ldots [d_k]$.
\end{proof}

From the proof of the previous theorem, it is clear that in the case of simplicial complexes, i.e. order ideals in $[2]^k$, $M_d(J)$ is a completely balanced simplicial complex. 

\begin{cor}\label{corollario incredibbile}
The $f$-polynomial of a simplicial complex is the $h$-polynomial of a
completely balanced vertex--decomposable simplicial ball.
\end{cor}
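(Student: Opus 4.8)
The plan is to deduce Corollary \ref{corollario incredibbile} directly from the proof of Proposition \ref{teo astoniscing incredibile}, specialized to the case of order ideals living in the cube $[2]^k$. First I would recall that a simplicial complex $X$ on vertex set $[k]$, identified with the set of its faces, is exactly an order ideal $J$ of the poset $[2]^k$ under the correspondence sending a face $F \subseteq [k]$ to its indicator tuple $x^F \in [2]^k$ with $(x^F)_i = 2$ if $i \in F$ and $(x^F)_i = 1$ otherwise; under this identification the rank function $\rho$ of $[2]^k$ satisfies $\rho(x^F) = |F|$, so the rank--generating function $\sum_{x \in J} q^{\rho(x)}$ is precisely the $f$-polynomial $\sum_{F \in X} q^{|F|}$ of $X$.

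Next I would invoke the machinery already assembled: form the $M$-complex $M_d(J)$ with $d = (2,2,\ldots,2)$. By Corollary \ref{corollario ideali}, any linear extension of $J$ shells $M_d(J)$ and its $h$-polynomial equals $\sum_{x\in J} q^{\rho(x)}$, which by the previous paragraph is the $f$-polynomial of $X$. By Proposition \ref{teorema VD}, $M_d(J)$ is vertex--decomposable. Since $d_i = 2$ for every $i$, the exponents are $e_i = d_i - 1 = 1$ for all $i$, so $M_d(J)$ is balanced of type $(1,1,\ldots,1)$, i.e. completely balanced. Finally, unless $X$ is the full simplex on $[k]$ — in which case $J = [2]^k$ and we may instead realize $X$ as an order ideal inside $[2]^{k+1}$ by adding a ghost coordinate, making $J$ a proper order ideal — we have $J \subsetneq [2]^k$, so Corollary \ref{cor sfere} tells us the geometric realization of $M_d(J)$ is a ball. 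Stringing these together yields that the $f$-polynomial of $X$ is the $h$-polynomial of a completely balanced vertex--decomposable simplicial ball.

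The only genuinely delicate point is the degenerate case where $X$ is itself a simplex, since then $J = [2]^k$ is not a proper order ideal and Corollary \ref{cor sfere} produces a sphere rather than a ball; the fix is the padding trick just described (embed $[2]^k \hookrightarrow [2]^{k+1}$ via $x \mapsto (x,1)$, which preserves $\rho$ and makes the image a proper order ideal), and I would state this explicitly so the argument is uniform. Everything else is bookkeeping: checking the indicator--tuple bijection is an order isomorphism onto the order ideal $J$, and checking that "$d_i = 2$ for all $i$" is exactly the condition "completely balanced" in the sense defined in the preliminaries. No new ideas beyond the results proved earlier in this section are needed; the corollary is essentially a reading of the proof of Proposition \ref{teo astoniscing incredibile} under the dictionary between simplicial complexes and order ideals of $[2]^k$.
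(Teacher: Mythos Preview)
Your proposal is correct and follows exactly the paper's approach: specialize the proof of Proposition~\ref{teo astoniscing incredibile} to order ideals $J \subseteq [2]^k$, noting that the exponents $e_i = 1$ force $M_d(J)$ to be completely balanced. You are in fact slightly more careful than the paper, which passes over the degenerate case $J = [2]^k$ in silence; your padding trick (embedding into $[2]^{k+1}$) is the natural fix and is implicitly what makes the choice $J \subsetneq [d_1]\times\cdots\times[d_k]$ in the proof of Proposition~\ref{teo astoniscing incredibile} legitimate here.
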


Recall that a simplicial complex, i.e. an order ideal $J \subseteq [2]^k$, is {\em flag} if either $J=[2]^k$ or $\rho(x) \leq 2$ for all $x \in \min([2]^k \setminus J)$. We recover \cite[Proposition 4.1]{Costantinescu} thanks to Corollary \ref{corollario incredibbile} and the following proposition.
\begin{prop} \label{prop da medaglia fields}
The simplicial complex $M_d(J)$ is flag if and only if $J$ is a flag simplicial complex.    
\end{prop}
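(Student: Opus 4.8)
The plan is to describe explicitly the faces of $M_d(J)$ in the relevant case $d=(2,\ldots,2)$, extract its minimal non-faces, and compare them with the minimal non-faces of $J$. First I would record the elementary description coming from \eqref{eq faccette}: since $J$ is a simplicial complex we have $d_i=2$ for all $i$, so the vertex set of $M_d$ is $\{1,2\}\times[k]$ and the facet of $x\in[2]^k$ is just $F_x=\{(x_i,i):i\in[k]\}$. Hence a vertex set $G$ is a face of $M_d(J)$ if and only if (a) $G$ meets each column $\{1,2\}\times\{i\}$ in at most one vertex --- equivalently $G$ is the graph of a partial function $g\colon T\to\{1,2\}$ for some $T\subseteq[k]$ --- and (b) some $x\in J$ satisfies $x_i=g_i$ for all $i\in T$. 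Because $J$ is an order ideal with minimum $(1,\ldots,1)$, condition (b) is equivalent to $\widehat g\in J$, where $\widehat g\in[2]^k$ agrees with $g$ on $T$ and equals $1$ off $T$. I would also fix the identification of $J$ with the abstract complex on $[k]$ whose faces are the sets $\{i:x_i=2\}$, $x\in J$; under it, the condition that $J$ is flag means precisely that every minimal non-face of this complex has at most two elements (the case $J=[2]^k$ being flag by fiat, with no minimal non-faces at all).

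Next I would sort the non-faces of $M_d(J)$ into those containing a pair $\{(1,i),(2,i)\}$ and those that are graphs of partial functions $g$ with $\widehat g\notin J$. The minimal non-faces of the first type are the pairs $\{(1,i),(2,i)\}$ (or the singleton $\{(2,i)\}$ when $\{i\}$ is not a face of $J$), all of cardinality $\leq 2$, hence irrelevant to flagness. For the second type I would prove the key reduction: if the graph of $g\colon T\to\{1,2\}$ is a minimal non-face and $g_j=1$ for some $j\in T$, then removing the vertex $(1,j)$ leaves $\widehat g$ unchanged, so the resulting set is still a non-face, contradicting minimality. Therefore $g\equiv 2$ on $T$, and such a minimal non-face has the form $\{(2,i):i\in T\}$ with $\widehat g$ equal to the indicator of $T$; now $\widehat g\notin J$ says that $T$ is a non-face of $J$, and inspecting the subsets $\{(2,i):i\in T'\}$ with $T'\subsetneq T$ shows that minimality is equivalent to $T$ being a minimal non-face of $J$. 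Thus $T\mapsto\{(2,i):i\in T\}$ is a cardinality-preserving bijection from the minimal non-faces of $J$ onto the minimal non-faces of $M_d(J)$ that contain no column pair.

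Finally I would conclude: since $M_d(J)\neq[2]^{|V|}$ (for instance $\{(1,1),(2,1)\}$ is a non-face), $M_d(J)$ is flag exactly when it has no minimal non-face of cardinality $\geq 3$; by the bijection just established this happens exactly when $J$ has no minimal non-face of cardinality $\geq 3$, i.e. when $J$ is flag. The only genuinely delicate step is the reduction in the previous paragraph --- recognising that the order-ideal structure of $J$ forces every coordinate of a minimal non-face to take the value $2$, so that no vertex can be ``wasted'' on a coordinate equal to $1$; once this is in hand, the rest is routine bookkeeping with the combinatorial model of $M_d(J)$.
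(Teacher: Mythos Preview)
Your argument is correct and clean once you are in the setting $d=(2,\ldots,2)$, but restricting to that ``relevant case'' from the outset skips a genuine part of the reverse implication. In the proposition $J$ is an arbitrary order ideal of $[d_1]\times\cdots\times[d_k]$, i.e.\ a multicomplex; part of the conclusion ``$J$ is a flag simplicial complex'' is that $J\subseteq[2]^k$ at all. Writing ``since $J$ is a simplicial complex we have $d_i=2$'' begs exactly this question. The paper supplies the missing step: with $m_i:=\max\{x_i:x\in J\}$, column $i$ contributes a minimal non-face of $M_d(J)$ of cardinality $m_i$, so flagness of $M_d(J)$ forces $m_i\leq 2$ for every $i$, whence $J\subseteq[2]^k$. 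A secondary point: even once one knows $J\subseteq[2]^k$, Definition~\ref{definizione gei} does not force $d_i=2$, so your formula $F_x=\{(x_i,i):i\in[k]\}$ is not literally valid in general; when some $d_i>2$ the vertices $(a,i)$ with $a\leq d_i-2$ lie in every facet, and $M_d(J)$ is an iterated cone over the $d=(2,\ldots,2)$ complex, hence has the same minimal non-faces --- but that reduction needs to be said.

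With these two gaps filled, your route via the explicit cardinality-preserving bijection $T\mapsto\{(2,i):i\in T\}$ between minimal non-faces of $J$ and the non-column minimal non-faces of $M_d(J)$ is correct and arguably tidier than the paper's proof, which checks the two implications separately by chasing individual non-faces rather than setting up the bijection once and for all.
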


\begin{proof}
First of all we prove that if $J$ is a flag simplicial complex then $M_d(J)$ is flag. By our assumption we have that $J \subseteq [2]^k$ is an order ideal. Let $G:=\{(i_1,j_1),\ldots,(i_h,j_h)\}$ be a non--face of $M_d(J)$, where  $j_1 \leq \ldots \leq j_h$ and $i_t \in [2]$ for all $t \in [h]$ and $h \geq 3$. For every $j \in [k]$ the set $X_j:=\{(1,j),(2,j)\}$ is a non--face of $M_d(J)$ by construction. If $X_j \subseteq G$ for some $j \in [k]$, then $G$ is not minimal. So assume $X_j \not \subseteq G$ for all $j \in [k]$; this implies $j_1<\ldots<j_h$. Let $x \in [2]^k$ such that $x_t=3-i_t$ if $t \in \{j_1,\ldots,j_h\}$ and $x_t=1$ otherwise. Since $G \subseteq F_x$, then $F_x$ is not a facet of $M_d(J)$, i.e. $x \notin J$.  Since $J$ is flag, there exists $y \leq x$, $y \notin J$ such that the set $A:=\{i \in [k]:y_i=2\}$ satisfies $1 \leq |A| \leq 2$. It is clear that the set $\{(1,t): t\in A\}$ is contained in $G$ and it is not a face of $M_d(J)$, because otherwise there exists $z \in J$ such that $\{(1,t): t\in A\} \subseteq F_z$ and this implies $y \leq z$ and $y \in J$. It follows that every non--face of $M_d(J)$ of cardinality $\geq 3$ is not minimal, i.e. $M_d(J)$ is flag.

We prove now that if $M_d(J)$ is flag then $J$ is a flag simplicial complex. Consider $m_i:=\max\{x_i:x \in J\}$. Notice that, for all $i \in [k]$, the set $\{(1,i),\ldots,(m_i,i)\}$ is a minimal non-face of $M_d(J)$ by construction. Hence, since $M_d(J)$ is flag, we have that $m_i \leq 2$, for all $i \in [k]$, i.e. $J$ is a simplicial complex. We need only to prove that $J$ is flag. If $\max(J)=\{(2,\ldots,2)\}$ we have nothing to prove. Then assume that $\max(J) \neq \{(2,\ldots,2)\}$. Let $x:=(x_1,\ldots,x_k)$ be a minimal non-face of $J$ and let $h:=|\{i \in [k]: x_i=2\}|$. Notice that $G:=\{(1,i_1),\ldots,(1,i_h)\}$ is a non-face of $M_d(J)$, where $\{i \in [k]: x_i=2\}=\{i_1,\ldots,i_h\}$ and $i_1<\ldots<i_h$. In fact, if it is a face, there exists $z \in J$ such that $G \subseteq F_z$ and then $z \geq x$, but $x \notin J$. It is also clear that $G$ is minimal, by the minimality of $x$. It follows that $h \leq 2$ and this concludes the proof.
\end{proof}
By Proposition \ref{prop da medaglia fields} we obtain the following equality, which supports \cite[Conjecture 1.4]{Costantinescu}:
$$\{\mbox{$h$-vectors of flag $M$-complexes}\}=\{\mbox{$f$-vectors of flag simplicial complexes}\}.$$ 

\section{Lehmer complexes of lower Bruhat intervals} \label{sezione codici}
In this section we introduce Lehmer codes for finite Coxeter groups and we study the shape of their lower Bruhat intervals whenever a Lehmer code is available. 

By taking advantage of Definition \ref{def complesso}, for a finite Coxeter system $(W,S)$ we introduce a shellable simplicial complex whose $h$-polynomial coincides with the rank--generating function of the group $W$ with the Bruhat order.

\begin{dfn} \label{def complesso2} The {\em Lehmer complex} $L(W,S)$ of a finite Coxeter system $(W,S)$ with exponents $e_1,\ldots,e_n$ is the simplicial complex $M_d$, where $d:=(e_1+1,\ldots,e_n+1)$.
\end{dfn}

It is well--known that the rank--generating function of the poset $(W,\leq)$ is $\prod_{i=1}^n [e_i+1]_q$. Then, by Corollary \ref{corollario ideali}, it equals the $h$-polynomial of the Lehmer complex of $(W,S)$. 

It is worth to mention that Lehmer complexes are the analogues of Coxeter complexes with respect to the length statistic: both are shellable complexes and all the linear extensions of their underlying poset (the weak order in the case of the Coxeter complex) provide shelling orders, see \cite{bjorner2}.

The Lehmer codes of permutations provide a bijection between $S_n$ and $[1] \times \ldots \times [n]$. Such a bijective function is a poset morphism between $S_n$ with the left weak order and $[1] \times \ldots \times [n]$ ordered componentwise, see \cite[Lemma 2.4]{Denoncourt}. Actually, it holds also that the inverse of this function is a poset morphism, whenever on $S_n$ we consider the Bruhat order. This is stated in Corollary \ref{corollario codice}. In this section we extend the notion of Lehmer code to finite Coxeter groups. We recall that the relation $\leq$ on $W$ is the Bruhat order. 

\begin{dfn} \label{def codici di L}
    Let $(W,S)$ be a finite Coxeter system with exponents $e_1,\ldots,e_n$. A bijective function $L: W \rightarrow \prod_{i=1}^n\{0,1,\ldots,e_i\}$ is a \emph{Lehmer code} if
    $$L^{-1}: \prod\limits_{i=1}^n\{0,1,\ldots,e_i\} \rightarrow (W,\leq)$$
    is order preserving.
\end{dfn} If $L$ is a Lehmer code for $(W,S)$ and $n:=|S|$, then necessarily $$\ell(w)=\rho(L(w))=\sum_{i=1}^n L(w)_i.$$  

For $w \in W$ we define

$$h_w(q):=\sum_{v\leq w}q^{\ell(v)},$$ the rank--generating function of the interval $[e,w]$, and 
$$J_w:=\{L(v)+(1,\ldots,1): v \leq w\}.$$

For all finite Coxeter groups with a Lehmer code we recover in a purely combinatorial way a result due to Bj\"orner and Ekedahl, see \cite[Theorem E]{bjorner}. They prove it for finite Weyl groups as a consequence of the geometry of their Schubert varieties. 

\begin{prop} \label{cor biorner}
   Let $L$ be a Lehmer code for a finite Coxeter system $(W,S)$. Then 
   the coefficients of $h_w(q)$ form an $M$-sequence, for all $w\in W$.
\end{prop}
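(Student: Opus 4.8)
The plan is to reduce Proposition \ref{cor biorner} to the machinery already set up in Section \ref{sezione complessi}. The key observation is that a Lehmer code $L$ for $(W,S)$ does exactly what is needed: by Definition \ref{def codici di L}, the inverse map $L^{-1}$ is order preserving, so the image $\{L(v) : v \leq w\}$ of a lower Bruhat interval $[e,w]$ is an order ideal of $\prod_{i=1}^n \{0,1,\ldots,e_i\}$. Indeed, if $L(v) \leq y$ componentwise with $y = L(u)$ for some $u \in W$, then $u = L^{-1}(y) \geq L^{-1}(L(v)) = v$ is false in general — so I must argue the other way: I want to show that if $y \leq L(v)$ then $L^{-1}(y) \leq v$ in Bruhat order, which is immediate since $L^{-1}$ is order preserving and $v \leq w$. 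Hence $\{L(v):v\leq w\}$ is downward closed, i.e. an order ideal. Shifting by $(1,\ldots,1)$ identifies it with the order ideal $J_w \subseteq [e_1+1]\times\cdots\times[e_n+1]$, which is a finite multicomplex.

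Next I would compute the $f$-polynomial of this multicomplex. Since $L$ is a Lehmer code, $\ell(v) = \rho(L(v)) = \sum_i L(v)_i$ for every $v \in W$ (this is the identity recorded right after Definition \ref{def codici di L}), and under the shift by $(1,\ldots,1)$ the rank function $\rho$ on $[e_1+1]\times\cdots\times[e_n+1]$ agrees with $\sum_i(x_i-1)$. Therefore the rank--generating function of the order ideal $J_w$ equals $\sum_{v\leq w} q^{\ell(v)} = h_w(q)$. In other words, $h_w(q)$ is precisely the $f$-polynomial of the finite multicomplex $J_w$, and by definition the $f$-vector of a finite multicomplex is an $M$-sequence. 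That already gives the claim, but to stay within the framework of the paper I would phrase it via $M$-complexes: by Corollary \ref{corollario ideali}, applied to the order ideal $J_w \subseteq [e_1+1]\times\cdots\times[e_n+1]$, the polynomial $\sum_{x\in J_w} q^{\rho(x)} = h_w(q)$ is the $h$-polynomial of the shellable complex $M_d(J_w)$, where $d=(e_1+1,\ldots,e_n+1)$; and the $h$-vector of a shellable simplicial complex that arises this way is an $M$-sequence because $M$-sequences coincide with $f$-vectors of multicomplexes (the characterization quoted in the preliminaries, \cite[Theorem 3.3]{StaCCA} together with Macaulay's theorem). Either route closes the proof.

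So the proof has essentially two lines: (i) observe $\{L(v):v\leq w\}$ is an order ideal because $L^{-1}$ is order preserving; (ii) observe its rank--generating function is $h_w(q)$ because $\rho\circ L = \ell$, hence the coefficients form an $M$-sequence by the very definition of $M$-sequence as the $f$-vector of a multicomplex. I do not anticipate a genuine obstacle — everything has been pre-packaged in Section \ref{sezione complessi}. The one point requiring a line of care is verifying that the image of a lower Bruhat interval under $L$ is downward closed in the componentwise order: this uses that $L^{-1}$ (not $L$) is the monotone map, so that $x \leq L(v)$ forces $L^{-1}(x) \leq v \leq w$ and hence $x \in \{L(u):u\leq w\}$. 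Given that, the rest is bookkeeping: match the shift by $(1,\ldots,1)$, match $\rho$ with $\ell$, and invoke Corollary \ref{corollario ideali} (or directly the Macaulay characterization) to conclude.
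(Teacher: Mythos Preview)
Your proposal is correct and follows essentially the same approach as the paper's proof: both observe that $J_w$ is an order ideal (a multicomplex) because $L^{-1}$ is order preserving, and that $h_w(q)$ is its $f$-polynomial because $\ell = \rho \circ L$, whence the coefficients form an $M$-sequence by definition. The paper's version is more terse and does not invoke Corollary \ref{corollario ideali}, going directly from ``multicomplex'' to ``$M$-sequence''; your additional remarks about the shift and the alternative route via the $M$-complex are accurate but not needed.
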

\begin{proof} Let $e_1,\ldots,e_n$ be the exponents of $(W,S)$.
Since the inverse of a Lehmer code is a poset morphism,
the set $J_w$ is an order ideal of $[e_1+1]\times \ldots \times [e_n+1]$, i.e. a multicomplex. Clearly $h_w(q)$ equals the $f$-polynomial of $J_w$ and then its coefficients form an $M$-sequence.
\end{proof}

In view of the next section, the previous result applies to Coxeter systems of types $A_n$, $B_n$, $D_n$ and $H_3$.

Now we introduce a simplicial complex whose $h$-polynomial equals the rank--generating function of the lower Bruhat interval $[e,w]$.

\begin{dfn}\label{def per intervalli} Let $L$ be a Lehmer code for a finite Coxeter system $(W,S)$ with exponents $e_1,\ldots,e_n$; for $w \in W$ the {\em Lehmer complex $L_w(W,S)$ of $[e,w]$} is the simplicial complex $M_d(J_w)$, where $d:=(e_1+1,\ldots,e_n+1)$.
\end{dfn}

In view of the analogy between the complex $L(W,S)$ and the Coxeter complex of $(W,S)$, the shellable complex $L_w(W,S)$ is an analogue of a Coxeter cone as introduced in \cite{stembridge}. The next result is an immediate consequence of all the machinery developed up to this point.

\begin{thm} \label{teorema h pol ranghi}
    Let $L$ be a Lehmer code for a finite Coxeter system $(W,S)$ and $w \in W$. Then the $h$-polynomial of the simplicial complex $L_w(W,S)$ is equal to $h_w(q)$.
\end{thm}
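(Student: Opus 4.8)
The plan is to unwind the definitions and then invoke the two main tools already established: Corollary~\ref{corollario ideali}, which computes the $h$-polynomial of the $M$-complex of an order ideal, and the fact (Proposition~\ref{cor biorner} and the discussion preceding it) that the inverse of a Lehmer code is order preserving. First I would observe that, by Definition~\ref{def per intervalli}, the complex $L_w(W,S)$ is precisely $M_d(J_w)$, where $d=(e_1+1,\ldots,e_n+1)$ and $J_w=\{L(v)+(1,\ldots,1):v\leq w\}$. The shift by $(1,\ldots,1)$ simply moves the ambient poset $\prod_{i=1}^n\{0,1,\ldots,e_i\}$ isomorphically onto $\prod_{i=1}^n[e_i+1]=[e_1+1]\times\cdots\times[e_n+1]$, and it respects the componentwise order, so $J_w$ sits inside the ambient poset for which $M_d(\cdot)$ is defined.

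Next I would check that $J_w$ is an order ideal of $[e_1+1]\times\cdots\times[e_n+1]$. This is exactly what makes Corollary~\ref{corollario ideali} applicable. Since $L^{-1}$ is a poset morphism from $\prod_{i=1}^n\{0,1,\ldots,e_i\}$ to $(W,\leq)$, if $y\leq L(v)$ in the product poset then $L^{-1}(y)\leq v\leq w$, so $y\in\{L(u):u\leq w\}$; translating by $(1,\ldots,1)$ gives that $J_w$ is downward closed. (This is the same argument used in the proof of Proposition~\ref{cor biorner}, so one could simply cite it.) Having established that $J_w$ is a finite order ideal, Corollary~\ref{corollario ideali} tells us that the $h$-polynomial of $M_d(J_w)$ equals $\sum_{x\in J_w}q^{\rho(x)}$, where $\rho(x)=\sum_{i=1}^n(x_i-1)$.

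Finally I would identify this sum with $h_w(q)$. Writing $x=L(v)+(1,\ldots,1)$ for $v\leq w$, we get $\rho(x)=\sum_{i=1}^n L(v)_i=\ell(v)$, the last equality being the observation recorded right after Definition~\ref{def codici di L} that $\ell(v)=\rho(L(v))=\sum_i L(v)_i$ for any element under a Lehmer code. Since $v\mapsto L(v)+(1,\ldots,1)$ is a bijection from $[e,w]=\{v\in W:v\leq w\}$ onto $J_w$, we obtain
$$
\sum_{x\in J_w}q^{\rho(x)}=\sum_{v\leq w}q^{\ell(v)}=h_w(q),
$$
which is the claim. There is essentially no obstacle here: the theorem is a bookkeeping assembly of Definition~\ref{def per intervalli}, Corollary~\ref{corollario ideali}, and the length-versus-rank identity for Lehmer codes, and the only point demanding a word of justification is the order-ideal property of $J_w$, which follows from the defining property of a Lehmer code exactly as in Proposition~\ref{cor biorner}.
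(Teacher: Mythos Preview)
Your proposal is correct and follows exactly the paper's own argument: the paper's proof is a one-line invocation of the identity $\ell(v)=\sum_i L(v)_i=\rho(L(v))$ together with Corollary~\ref{corollario ideali}, and you have simply spelled out the same steps (including the order-ideal property of $J_w$, which the paper takes as already established in Proposition~\ref{cor biorner}).
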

\begin{proof}
    Since $\ell(w)=\sum_{i=1}^n L(w)_i=\rho(L(w))$, for all $w\in W$, by Corollary \ref{corollario ideali} the result follows.
\end{proof}

\section{Lehmer codes for some finite Coxeter groups} \label{sezione codici espliciti}
In this section we construct several Lehmer codes for some finite irreducible Coxeter systems. In general the non--reducible case can be traced back to the irreducible one in view of the following observation. If $(W,S) \simeq (W_1,S_1)\times (W_2,S_2)$, and $L_1$, $L_2$ are Lehmer codes for $(W_1,S_1)$ and $(W_2,S_2)$, respectively, then the assignment $$
w \mapsto L(w)=(L_1(w_1)_1,\ldots,L_1(w_1)_{|S_1|},L_2(w_2)_1,\ldots,L_2(w_2)_{|S_2|})$$ defines a Lehmer code for $(W,S)$, where $w=w_1w_2 \in W$, $w_1\in W_1$ and $w_2\in W_2$.

While a rank preserving bijection $W \simeq \prod_{i=1}^n\{0,1,\ldots,e_i\}$ exists for all finite Coxeter systems, where $e_1,\ldots,e_n$ are the exponents of $(W,S)$, the additional condition to be a poset morphism is not fullfilled by all such bijections. 

\begin{oss}
The existence of Lehmer codes for classical Weyl groups, although in different terms, was already foreseen by R. Stanley in \cite[Section 7]{Stanley}. 
In \cite{billei}, S. Billey has proved that, in type $A_n$ and $B_n$, any lower Bruhat interval with palindromic Poincaré polynomial has a "Lehmer code", i.e. it admits an immersed product of chains, see \cite[Corollary 1.3]{billei}. We checked that this property holds also in type $H_3$.
\end{oss}

Given a Lehmer code for $(W,S)$, one can construct directly several other Lehmer codes. In fact, it is clear that if $L$ is a Lehmer code for $(W,S)$ and $f$ is an automorphism of the poset $(W,\leq)$, then $L \circ f$ is a Lehmer code for $(W,S)$. In the dihedral system $I_2(m)$, the automorphism group of the Bruhat order is isomorphic to $\Z_2^{m-1}$. In all other cases the automorphism group of $(W,\leq)$ can be described in terms of the automorphism group of the Coxeter graph of $(W,S)$ and the inversion automorphism $w\mapsto w^{-1}$, see \cite[Section 2.3]{bjorner} and references therein. Denote by $L^*$ the \emph{dual of the Lehmer code $L$}, which we define by setting $$L^*(w):=L(w^{-1}),$$ for all $w\in W$. This is a Lehmer code by the previous discussion. 

One of the main tool for our constructions is the machinery described in \cite[Section 7.1]{BB} to factorize the Poincaré polynomial of $(W,S)$. We establish here some useful notation. Let $S=\{s_1,\ldots,s_n\}$ and $w\in W$. As proved in \cite[Corollary 2.4.6]{BB} (see also \cite[Proposition 3.4.2]{BB})
we have a factorization $$w=w^{(1)}w^{(2)}\cdots w^{(n)},$$ such that $w^{(i)}\in {^{\{s_1,\ldots,s_{i-1}\}}W_{\{s_1,\ldots,s_i\}}}$, for all $i\in [n]$, and $\ell(w)=\ell(w^{(1)})+\ldots + \ell(w^{(n)})$. Then the Poincaré polynomial of $(W,S)$ factorizes as $$W(q)=\prod_{i=1}^n {^{\{s_1,\ldots,s_{i-1}\}}W_{\{s_1,\ldots,s_i\}}}(q).$$ 

For the notation on irreducible Coxter systems we follow \cite[Appendix A1]{BB}, sometimes with a different linear order on the generators.  We also observe that for every $u,v \in W$, if $u^{(i)} \leq v^{(i)}$ for all $i\in [n]$ then, by the subword property of the Bruhat order, $u=u^{(1)}\cdots u^{(n)}\leq v^{(1)}\cdots v^{(n)}=v$. This is crucial for the construction of some Lehmer codes. In the following subsections, for sake of clarity, we adopt the same notation for the type of a Coxeter system and the group defining it.

\subsection{Lehmer codes for type $I_2(m)$}
Let $m \in \{3,4,\ldots\}$.
The Coxeter system $(W,S)=I_2(m)$ has two generators $s,t$ which satisfy the relation $(st)^m=e$. The Poincaré polynomial factorizes as $W(q)=W_{\{s\}}(q)\cdot {^{\{s\}}W}(q)=[2]_q[m]_q$.
Let $$L_{I_2(m)}: W  \rightarrow \{0,1\}\times \{0,\ldots,m-1\}$$ be defined by setting $L_{I_2(m)}(w)=(\ell(w^{(1)}),\ell(w^{(2)}))$, for all $w\in W$.
Then the function $L_{I_2(m)}$ is a Lehmer code for $I_2(m)$. 
It is straightforward that there are $2^{m-1}$ distinct Lehmer codes for $I_2(m)$ and they are all obtained by composing $L_{I_2(m)}$ with an automorphism of $(W,\leq)$.

\subsection{A Lehmer code for type $A_n$} \label{sezione tipo a}
The Poincaré polynomial for Coxeter systems $(A_n,S)$ of type $A_n$ factorizes as $A_n(q)=[2]_q[3]_q\cdots [n+1]_q$. The quotients ${^{\{s_1,\ldots,s_{i-1}\}}(A_n)_{\{s_1,\ldots,s_i\}}}$ with the Bruhat order are chains, for all $i\in [n]$. Let $$L_{A_n}:A_n\rightarrow \prod\limits_{i=1}^n\{0,\ldots,i\},$$ be defined by $L_{A_n}(w)=(\ell(w^{(1)}),\ldots,\ell(w^{(n)}))$, for all $w\in A_n$. It is clear that
the function $L_{A_n}$ is a Lehmer code for type $A_n$. For example, if $w=s_2s_1s_3s_2$ then $L_{A_3}(w)=(0,2,2)$; in fact $w^{(1)}=e$, $w^{(2)}=s_2s_1$ and $w^{(3)}=s_3s_2$. 

\vspace{1em}
We now describe the Lehmer code $L_{A_{n-1}}$ in terms of inversions. Consider the symmetric group $S_n$ as a Coxeter group of type $A_{n-1}$. 
Let 
\begin{equation} \label{equazione Ln}
L_n(w):=(0,L_{A_{n-1}}(w)_1,\ldots,L_{A_{n-1}}(w)_{n-1}),\end{equation} for all $w\in S_n$. 

\begin{thm} \label{teorema cocodice} Let $n \geq 2$. Then 
    $$L_n(w)_k= |\{i \in [n]: i<k, \, w^{-1}(i)>w^{-1}(k)\}|,$$ for all $k\in [n]$, $w \in S_n$.
\end{thm}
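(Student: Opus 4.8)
The plan is to compare the two descriptions of the Lehmer code $L_{A_{n-1}}$: the recursive one via the parabolic factorization $w = w^{(1)}\cdots w^{(n-1)}$ with $L_{A_{n-1}}(w)_i = \ell(w^{(i)})$, and the claimed inversion-counting formula. Set $c_k(w) := |\{i \in [n] : i < k,\ w^{-1}(i) > w^{-1}(k)\}|$ for $k \in [n]$; note $c_1(w) = 0$, consistent with the padding zero in \eqref{equazione Ln}. First I would identify, for the Coxeter system $(S_n, S)$ with $S = \{s_1,\dots,s_{n-1}\}$, what the quotient $^{\{s_1,\dots,s_{i-1}\}}(S_n)_{\{s_1,\dots,s_i\}}$ looks like: the parabolic subgroup $W_{\{s_1,\dots,s_i\}}$ is the symmetric group on $\{1,\dots,i+1\}$ (fixing $i+2,\dots,n$), and $^{\{s_1,\dots,s_{i-1}\}}W_{\{s_1,\dots,s_i\}}$ consists of its minimal-length representatives modulo $W_{\{s_1,\dots,s_{i-1}\}}$ on the left. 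These are exactly the elements $s_i s_{i-1}\cdots s_{i-j+1}$ for $0 \le j \le i$, which in one-line notation on $\{1,\dots,i+1\}$ move $i+1$ into position $i+1-j$ and shift the block upward; the element $w^{(i+1)}$ (in the paper's indexing, the $(i+1)$-st factor, lying in the quotient for the pair $\{s_1,\dots,s_i\}\supseteq\{s_1,\dots,s_{i-1}\}$) has length $j$ where $j$ is the number of entries of $w$ among $\{1,\dots,i\}$ that lie to the \emph{left} of the entry $i+1$ in $w$'s one-line notation — equivalently $j = |\{a < i+1 : w^{-1}(a) > w^{-1}(i+1)\}| = c_{i+1}(w)$.

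The cleanest route to make this precise is to recall the standard fact (e.g. \cite[Chapter 2]{BB}, or directly from the action of $s_k$) that the chain of quotients encodes the one-line notation of $w^{-1}$ entry by entry. Concretely: run the factorization from the right. Write $w = u \cdot w^{(n-1)}$ with $u = w^{(1)}\cdots w^{(n-2)} \in W_{\{s_1,\dots,s_{n-2}\}}$, so $u$ fixes $n$; then $w^{(n-1)} \in {}^{\{s_1,\dots,s_{n-2}\}}W_{\{s_1,\dots,s_{n-1}\}}$ is the minimal coset representative of $w W_{\{s_1,\dots,s_{n-2}\}}$... more usefully, since we are multiplying on the \emph{right} by parabolic factors, it is $w^{-1}$ whose values get peeled off. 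I would argue: the position of $n$ in the one-line notation of $w$ is $w^{-1}(n)$, and $w^{(n-1)} = s_{n-1}s_{n-2}\cdots s_{w^{-1}(n)}$ has length $n - w^{-1}(n)$; on the other hand $c_n(w) = |\{i < n : w^{-1}(i) > w^{-1}(n)\}| = n - w^{-1}(n)$ as well, since exactly the values greater than $w^{-1}(n)$ among positions $1,\dots,n$ are occupied by entries $< n$ (position $w^{-1}(n)$ holding $n$). This establishes $L_n(w)_n = c_n(w)$. Then, letting $w' := u = w\cdot (w^{(n-1)})^{-1} \in S_{n-1}$, one checks that passing from $w$ to $w'$ deletes the entry $n$ from the one-line notation of $w$ and leaves the relative order of $1,\dots,n-1$ unchanged, whence $c_k(w') = c_k(w)$ for all $k \le n-1$, while $L_{A_{n-2}}(w')_i = L_{A_{n-1}}(w)_i$ for $i \le n-2$ by the compatibility of the factorizations. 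An induction on $n$ then closes the argument.

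The main obstacle I anticipate is the bookkeeping in the inductive step: showing rigorously that $w' = w\cdot(w^{(n-1)})^{-1}$ has one-line notation obtained from that of $w$ by removing the symbol $n$ and leaving everything else in place, and that this operation preserves $c_k$ for $k < n$. This requires understanding precisely how right-multiplication by $s_{n-1}s_{n-2}\cdots s_{w^{-1}(n)}$ acts — it is right-multiplication, hence acts on positions: it cyclically shifts the block of positions from $w^{-1}(n)$ up to $n$, moving the entry $n$ from position $w^{-1}(n)$ to position $n$ and sliding the intervening entries one step left — so $w'$ restricted to positions $1,\dots,n-1$ is exactly $w$ with $n$ excised. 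Once this is nailed down (a short but careful computation with the standard right action of the $s_i$ on positions), the rest is formal. A secondary, minor point is to double-check the index shift between the paper's convention $L_{A_{n-1}}(w) = (\ell(w^{(1)}),\dots,\ell(w^{(n-1)}))$ and the padded version $L_n$ in \eqref{equazione Ln}, so that the claimed formula's index $k$ matches the factor $w^{(k-1)}$ correctly; this is purely notational.
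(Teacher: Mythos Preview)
Your proposal is correct and follows essentially the same inductive strategy as the paper: identify $w^{(n-1)} = s_{n-1}\cdots s_a$ with $a = w^{-1}(n)$, check that its length equals $c_n(w) = n - a$, observe that $\widetilde{w} = w\,(w^{(n-1)})^{-1}$ is obtained from $w$ by deleting the symbol $n$ (so $c_k(\widetilde{w}) = c_k(w)$ for $k < n$), and invoke the inductive hypothesis. Two small slips to fix in the write-up: in your first paragraph ``to the left of the entry $i+1$'' should read ``to the right'', and in the last paragraph the right-multiplication you describe is by $(w^{(n-1)})^{-1} = s_a s_{a+1}\cdots s_{n-1}$, not by $w^{(n-1)}$ itself.
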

\begin{proof}
    We prove the result by induction on $n$. If $n=2$ the result is trivial. Assume $n>2$. If $w(n)=n$, we have that $L_n(w)_n=0=|\{i \in [n-1]: w^{-1}(i)>w^{-1}(n)\}|$ and, by our inductive hypothesis,  $L_n(w)_k=|\{i \in [k-1]: w^{-1}(i)>w^{-1}(k)\}|$, for all $k<n$. 
    
    Let $a:=w^{-1}(n)< n$,  and $\widetilde{w}:=w^{(1)}\cdots w^{(n-2)}$. Then, for $k<n$, by the inductive hypothesis we have that
    \begin{eqnarray*}
       L_n(w)_k&=& L_{n-1}(\widetilde{w})_k\\
        &=& |\{i \in [k-1]: \widetilde{w}^{-1}(i)>\widetilde{w}^{-1}(k)\}| \\
         &=& |\{i \in [k-1]: w^{-1}(i)>w^{-1}(k)\}|,
    \end{eqnarray*}
    since $w^{(n-1)}=s_{n-1}s_{n-2}\cdots s_a$ and then $$w = \tilde{w}(1)\ldots \tilde{w}(a-1) \, n \, \tilde{w}(a) \ldots \tilde{w}(n-1).$$
    Moreover we have that \begin{eqnarray*}
        L_n(w)_n&=& \ell(w)-\ell(\tilde{w})\\
        &=& |\{i \in [n-1]: w^{-1}(i)>w^{-1}(n)\}|
    \end{eqnarray*} and this concludes the proof.
\end{proof}

For $n\geq 3$, we denote by $\iota_n$ the non--trivial automorphism of the Coxeter graph of $A_{n-1}$. If $n=2$, $\iota_n$ is the identity. If $w_0$ is the element of maximal length in $S_n$, then $\iota_n(w)=w_0ww_0$, for all $w\in S_n$.
In the following corollary we recover what is known as \emph{Lehmer code of a permutation}, see e.g. \cite[Section 11.3]{Kerber}. It is worth to mention that the Lehmer code of a permutation provide the leading term of the corresponding Schubert polynomial, see \cite{Fulton}.
\begin{cor} \label{corollario codice} Let $n\in \N$.
    Then $$|\{j\in [n]: j>i, \, w(j)<w(i)\}|=(L_n \circ \iota_n)^*(w)_{n-i+1},$$
    for all $i\in [n]$, $w\in S_n$.
\end{cor}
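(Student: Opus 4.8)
The plan is to unwind the three compositions in $(L_n \circ \iota_n)^*$ one at a time, reducing everything to Theorem~\ref{teorema cocodice}. Recall the definitions: $L^*(w) = L(w^{-1})$, so $(L_n \circ \iota_n)^*(w) = (L_n \circ \iota_n)(w^{-1}) = L_n(\iota_n(w^{-1}))$. Since $\iota_n(\sigma) = w_0 \sigma w_0$ for all $\sigma \in S_n$, and $w_0$ is the permutation $i \mapsto n+1-i$, we have $\iota_n(w^{-1}) = w_0 w^{-1} w_0$. The first step is therefore to apply Theorem~\ref{teorema cocodice} to the permutation $u := w_0 w^{-1} w_0$, obtaining
$$L_n(u)_k = |\{i \in [n] : i < k,\ u^{-1}(i) > u^{-1}(k)\}|.$$

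Next I would compute $u^{-1}$ explicitly: $u^{-1} = (w_0 w^{-1} w_0)^{-1} = w_0 w w_0$, so $u^{-1}(j) = n+1 - w(n+1-j)$ for all $j \in [n]$. The core of the proof is then a change of variables in the counting set. Substituting $i \mapsto n+1-i'$ and $k \mapsto n+1-k'$ (equivalently, setting $k = n+1-i$ with $i$ as in the corollary's statement, and renaming the dummy index), the condition $i < k$ becomes $i' > k'$, and the condition $u^{-1}(i) > u^{-1}(k)$ becomes $n+1-w(i') > n+1-w(k')$, i.e. $w(i') < w(k')$. Setting $k'$ equal to the index $i$ of the statement — so that $L_n(u)_k = L_n(u)_{n-i+1}$ with $k = n-i+1$ — the set becomes $\{i' \in [n] : i' > i,\ w(i') < w(i)\}$, whose cardinality is exactly $|\{j \in [n] : j > i,\ w(j) < w(i)\}|$, the quantity on the left-hand side.

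So the steps in order are: (1) unwind $(L_n\circ\iota_n)^*$ to $L_n(w_0 w w_0)$ via the definitions of $L^*$ and $\iota_n$; (2) apply Theorem~\ref{teorema cocodice} to $w_0 w w_0$; (3) compute $(w_0 w w_0)^{-1} = w_0 w^{-1} w_0$ — wait, more carefully, note $L_n(\iota_n(w^{-1})) = L_n(w_0 w^{-1} w_0)$ and $(w_0 w^{-1} w_0)^{-1} = w_0 w w_0$, so it is this latter permutation whose values enter the inequality; (4) perform the order-reversing substitution $i \mapsto n+1-i$, $k \mapsto n+1-k$ on both the index variables and inside $w_0 (\cdot) w_0$, checking that inequalities flip correctly; (5) match indices by setting $k = n-i+1$. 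None of these steps is deep; the only place to be careful is bookkeeping with the two layers of conjugation by $w_0$ and the inversion, making sure that after all cancellations the permutation appearing inside the count is plain $w$ (not $w^{-1}$) and that the single index substitution $k \mapsto n-i+1$ lines up with the statement. I expect step (4), keeping straight which inequalities reverse under $i \mapsto n+1-i$ versus which reverse under applying $w_0$ to the values, to be the main (and really the only) source of potential sign/direction errors; everything else is formal substitution.
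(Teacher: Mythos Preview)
Your proposal is correct and follows essentially the same route as the paper's proof: unwind $(L_n\circ\iota_n)^*(w)$ to $L_n(w_0w^{-1}w_0)$, apply Theorem~\ref{teorema cocodice} with $(w_0w^{-1}w_0)^{-1}=w_0ww_0$, and then perform the order-reversing change of variables to rewrite the counting set. The paper's write-up is slightly terser (it plugs $k=n-i+1$ immediately and does the substitution $j\mapsto n-j+1$ in one line), but the logic is identical.
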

\begin{proof}
     By Theorem \ref{teorema cocodice} we obtain 
    \begin{eqnarray*}
        L_n(w_0w^{-1}w_0)_{n-i+1} &=& |\{j \in [n-i]: (w_0ww_0)(j)>(w_0ww_0)(n-i+1)\}| \\
        &=& |\{j \in [n-i]: w(i)>w(n-j+1)\}| \\
        &=& |\{j \in [n]: j>i, \, w(i)>w(j)\}|. 
    \end{eqnarray*} We have that
    $(L_n \circ \iota_n)^*(w)=(L_n \circ \iota_n)(w^{-1})=L_n(w_0w^{-1}w_0)$ and the result follows.
\end{proof}

By Corollary \ref{corollario codice}, what is known as Lehmer code satifies the condition of Definition \ref{def codici di L} and then it is a Lehmer code in our meaning.

\subsection{A Lehmer code for type $B_n$}

\begin{figure}  
\begin{center}\begin{tikzcd}
s_1 \arrow[r, dash]{r}{4} & s_2 \arrow[r, dash] & s_3 \arrow[r, dash] & \cdots \arrow[r, dash] & s_n 
\end{tikzcd}\caption{Coxeter graph of $B_n$.}  \label{tipo B} \end{center} 
\end{figure}

We can construct a Lehmer code for type $B_n$ similarly to type $A_n$, as we state in the next theorem. Let $(B_n,S)$ be a Coxeter system of type $B_n$;
we take the linear order on $S$ as in Figure \ref{tipo B}.
\begin{thm} \label{teorema Bn}
    Let $n\geq 2$. Then the function $$L_{B_n}: B_n \rightarrow \prod\limits_{i=1}^n\{0,\ldots, 2i-1\}$$ defined by setting $L_{B_n}(w)=(\ell(w^{(1)}),\ldots,\ell(w^{(n)}))$, for all $w\in B_n$, is a Lehmer code.
\end{thm}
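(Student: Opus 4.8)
The plan is to show that $L_{B_n}$ is a bijection onto $\prod_{i=1}^n\{0,\ldots,2i-1\}$ whose inverse is order preserving for the Bruhat order, mimicking the argument for type $A_n$ but using the factorization machinery of \cite[Section 7.1]{BB}. First I would recall that, with the linear order on $S$ from Figure \ref{tipo B}, the Poincaré polynomial of $B_n$ factorizes as $B_n(q)=\prod_{i=1}^n {^{\{s_1,\ldots,s_{i-1}\}}(B_n)_{\{s_1,\ldots,s_i\}}}(q)$, and I would identify each quotient ${^{\{s_1,\ldots,s_{i-1}\}}(B_n)_{\{s_1,\ldots,s_i\}}}$ concretely: for $i\geq 2$, the parabolic subgroup $(B_n)_{\{s_1,\ldots,s_i\}}$ is of type $B_i$, and the relevant minimal coset representatives form a chain of length $2i-1$ under the Bruhat order (this is the analogue of the fact used for $A_n$ that the corresponding quotients are chains; here the quotient $W_{B_i}/W_{B_{i-1}}$ has $2i$ elements totally ordered by length, since its Poincaré polynomial is $[2i]_q$). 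For $i=1$ the quotient is $\{e,s_1\}$, a chain of length $1=2\cdot 1-1$. Since each quotient is a chain, the map $w\mapsto(\ell(w^{(1)}),\ldots,\ell(w^{(n)}))$ determines $(w^{(1)},\ldots,w^{(n)})$ uniquely, hence is a bijection onto $\prod_{i=1}^n\{0,\ldots,2i-1\}$, and it is rank preserving because $\ell(w)=\sum_i\ell(w^{(i)})$.

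Next I would establish the order-preserving property of $L_{B_n}^{-1}$. Given $x\leq y$ componentwise in $\prod_{i=1}^n\{0,\ldots,2i-1\}$, write $u=L_{B_n}^{-1}(x)$ and $v=L_{B_n}^{-1}(y)$. Then $\ell(u^{(i)})=x_i\leq y_i=\ell(v^{(i)})$ for every $i$; since the $i$-th quotient ${^{\{s_1,\ldots,s_{i-1}\}}(B_n)_{\{s_1,\ldots,s_i\}}}$ is a chain in the Bruhat order, $\ell(u^{(i)})\leq\ell(v^{(i)})$ forces $u^{(i)}\leq v^{(i)}$ in Bruhat order. By the observation recorded just before the subsections — namely that $u^{(i)}\leq v^{(i)}$ for all $i$ implies $u=u^{(1)}\cdots u^{(n)}\leq v^{(1)}\cdots v^{(n)}=v$ via the subword property — we conclude $u\leq v$. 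This is exactly the condition of Definition \ref{def codici di L}, so $L_{B_n}$ is a Lehmer code.

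The main obstacle is the verification that each quotient ${^{\{s_1,\ldots,s_{i-1}\}}(B_n)_{\{s_1,\ldots,s_i\}}}$ is genuinely a chain under the Bruhat order — not merely that it has $2i$ elements. One way to see this: the minimal-length representatives of $W_{B_i}/W_{B_{i-1}}$ are the prefixes of the unique maximal such representative, and in a signed-permutation model for $B_i$ these representatives can be listed explicitly (they correspond to moving the "new" coordinate $\pm i$ through the available positions), and one checks directly from the subword property that consecutive ones are related by a cover, so the whole quotient is totally ordered. Alternatively, one can invoke the general fact that a parabolic quotient whose Poincaré polynomial is $[m]_q$ must be a chain, since a graded poset of rank $m-1$ with $m$ elements has exactly one element of each rank and hence is a chain. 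Once this structural fact is in hand, everything else is the same formal argument as for $L_{A_n}$, and I would keep the written proof short by citing the $A_n$ case as a template and only spelling out the identification of the $B_n$-quotients as chains.
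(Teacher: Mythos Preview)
Your proposal is correct and follows essentially the same approach as the paper: the paper's proof simply checks that the maximal quotient ${^{S\setminus\{s_n\}}B_n}$ has $2n$ elements and rank $2n-1$ (via $\ell(w_0)-\ell(w_0(S\setminus\{s_n\}))=n^2-(n-1)^2$), hence is a chain, and then invokes the general framework laid out at the start of Section \ref{sezione codici espliciti} to conclude. Your second alternative for verifying the chain property --- that a graded poset of rank $m-1$ with $m$ elements must be a chain --- is exactly the argument the paper uses, so you may safely keep the written proof as short as the paper's.
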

\begin{proof} If $n\geq 2$,
  we have that the maximal quotient ${^{S\setminus \{s_n\}}B_n}$ is a chain of rank $2n-1$. In fact, $$\ell({^{S\setminus \{s_n\}}w_0})=\ell(w_0)-\ell(w_0(S\setminus \{s_n\}))=n^2-(n-1)^2=2n-1$$ and $$|{^{S\setminus \{s_n\}}B_n}|=\frac{2^nn!}{2^{n-1}(n-1)!}=2n.$$ This implies our statement.
\end{proof}

\begin{oss}
    It is worth to mention that Lehmer codes $L_{A_n}$ and $L_{B_n}$ have been considered in \cite{geck} for the study of bases for the Bruhat order in types $A_n$ and $B_n$. Moreover, in type $B_n$, a {\em signed Lehmer code} is defined in \cite{Smirnov}. 
\end{oss}  

\subsection{A Lehmer code for type $D_n$}

\vspace{1em}
Let $n\geq 4$ and $(D_n,S)$ be a Coxeter system of type $D_n$, whose generators are ordered as in Figure \ref{fig Coxeter Dn}.
\begin{figure}  
\begin{center}\begin{tikzcd}
& s_0 \arrow[d,dash]& & & \\
s_1 \arrow[r, dash] & s_2  \arrow[r, dash] & s_3 \arrow[r, dash] & \cdots \arrow[r, dash] & s_{n-1} 
\end{tikzcd}\caption{Coxeter graph of $D_n$.}  \label{fig Coxeter Dn} \end{center} 
\end{figure} We define $n$ chains in the Bruhat order of the group $D_n$ as follows: 
\begin{eqnarray*} 
    X_1 &:=& (D_n)_{\{s_1\}}\\
    X_2 &:=& {^{\{s_1\}}}(D_n)_{\{s_1,s_2\}} \cup \, s_0s_2s_1\{e\}\\
    X_3 &:=&  {^{\{s_1,s_2\}}}(D_n)_{\{s_1,s_2,s_3\}} \cup \, s_3s_0s_2s_1\{e,s_3\}\\
    &\vdots&  \\
     X_{n-1} &:=& {^{\{s_1,\ldots,s_{n-2}\}}}(D_n)_{\{s_1,\ldots,s_{n-1}\}}  \\
     && \cup \, \, s_{n-1}s_{n-2}\cdots s_3s_0s_2s_1\{e,s_3,\ldots, s_3s_4\cdots s_{n-1} \} \\
    Y_n &:=& \{e,s_0,s_0s_2,s_0s_2s_3,\ldots, s_0s_2s_3\cdots s_{n-1}\} 
   \end{eqnarray*}

We need the following result to define a Lehmer code for type $D_n$.

\begin{prop} \label{prop quoziente} Let $n\geq 4$. Then \,
$Y_{n-1} \cdot {^{S\setminus \{s_{n-1}\}}D_n} = X_{n-1}Y_n $.
\end{prop}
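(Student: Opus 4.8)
The plan is to establish the set equality $Y_{n-1} \cdot {}^{S\setminus \{s_{n-1}\}}D_n = X_{n-1}Y_n$ by first understanding both sides as subsets of $D_n$ arising from the parabolic factorization machinery of \cite[Section 7.1]{BB}, and then matching them element by element according to length. First I would compute the cardinalities of both sides: the maximal quotient ${}^{S\setminus\{s_{n-1}\}}D_n$ has order $\frac{2^{n-1}n!}{2^{n-2}(n-1)!} = 2n$, while $|Y_{n-1}| = n-1$ (it has $n-1$ listed elements $e, s_0, s_0s_2, \ldots, s_0s_2\cdots s_{n-2}$), and on the right $|X_{n-1}|$ should be $2n-1$ (the quotient ${}^{\{s_1,\ldots,s_{n-2}\}}(D_n)_{\{s_1,\ldots,s_{n-1}\}}$ contributes a chain together with the extra coset terms) and $|Y_n| = n$. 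One checks that the products are ``unambiguous'' in the sense that the factorizations $w = uv$ with $u$ in the left factor and $v$ in the right factor are length-additive and unique, so both products have the same cardinality; this is exactly the situation governed by the factorization $w = w_J \cdot {}^J w$ with $\ell(w) = \ell(w_J) + \ell({}^J w)$.

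Next I would identify the two sides as generating functions. Using $D_n(q) = \prod_{i} {}^{\{s_1,\ldots,s_{i-1}\}}(D_n)_{\{s_1,\ldots,s_i\}}(q)$ adapted to the $D_n$ ordering (with $s_0$ inserted appropriately), both $Y_{n-1}\cdot {}^{S\setminus\{s_{n-1}\}}D_n$ and $X_{n-1}Y_n$ are length-additive products realizing the same factor of the Poincaré polynomial, namely the quotient $(D_n)/(D_{n-1})$ or equivalently the ``last column'' of the Lehmer-code construction. So it suffices to show the two sets are literally equal as subsets of $W$, not merely equinumerous with the same length distribution. For this I would argue directly: take a generic element $y \cdot w$ with $y \in Y_{n-1}$, $w \in {}^{S\setminus\{s_{n-1}\}}D_n$, and rewrite it, pushing the $s_0 s_2 \cdots$ tail of $y$ past $w$ using braid moves, to land it in the form $x \cdot y'$ with $x \in X_{n-1}$, $y' \in Y_n$; conversely, reverse the rewriting. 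The explicit coset representatives listed in the definitions of $X_{n-1}$ and $Y_n$ (the strings $s_{n-1}s_{n-2}\cdots s_3 s_0 s_2 s_1$ and $s_0 s_2 \cdots s_{n-1}$) are designed precisely so that these rewritings go through cleanly.

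The main obstacle, I expect, is handling the ``exceptional'' coset representatives in $X_{n-1}$ — the term $s_{n-1}s_{n-2}\cdots s_3 s_0 s_2 s_1\{e, s_3, \ldots, s_3 s_4 \cdots s_{n-1}\}$ — which do not belong to the parabolic quotient ${}^{\{s_1,\ldots,s_{n-2}\}}(D_n)_{\{s_1,\ldots,s_{n-1}\}}$ proper but are appended to it, reflecting the branching node $s_0$ of the $D_n$ Coxeter graph. These are exactly the elements whose reduced words use $s_0$, and verifying that multiplying them by $Y_n$ covers precisely the part of $Y_{n-1} \cdot {}^{S\setminus\{s_{n-1}\}}D_n$ involving $s_0$ in a nontrivial way requires a careful bookkeeping of reduced words in $D_n$. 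I would organize this by splitting both sides according to whether a reduced expression of the element contains $s_0$: on the ``$s_0$-free'' part the identity reduces to the analogous type-$A$/type-$B$ statement already implicit in Sections \ref{sezione tipo a} and the type $B_n$ discussion, and on the ``$s_0$-involved'' part one matches the exceptional cosets on both sides by the explicit braid relation $s_0 s_2 s_1 = s_1 s_0 s_2$-type identities valid in $D_n$. Once both halves are checked, the equality of the full sets follows, and since the products are length-additive, the statement is proved.
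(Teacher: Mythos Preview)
Your overall strategy --- compare cardinalities and then prove one inclusion by explicit braid-move rewriting --- is exactly the paper's approach. However, your proposal contains concrete errors and leaves the essential work undone.

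First, your cardinality count is off: $|X_{n-1}| = 2(n-1)$, not $2n-1$. The parabolic quotient ${}^{\{s_1,\ldots,s_{n-2}\}}(D_n)_{\{s_1,\ldots,s_{n-1}\}}$ is a chain of length $n-1$ in type $A_{n-1}$, contributing $n$ elements, and the appended set $\{e,s_3,\ldots,s_3s_4\cdots s_{n-1}\}$ has $n-2$ elements, for a total of $2n-2$. With the correct value, $|X_{n-1}|\cdot|Y_n| = 2(n-1)\cdot n = (n-1)\cdot 2n = |Y_{n-1}|\cdot|{}^{S\setminus\{s_{n-1}\}}D_n|$, and the argument proceeds.

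Second, you assert that the product $X_{n-1}Y_n$ is length-additive with unique factorization, but neither factor is a parabolic subgroup or quotient, so this is not automatic and in fact is part of what the proposition establishes. The paper sidesteps this: knowing that the \emph{left}-hand product is free (from the parabolic decomposition $W_J\cdot{}^JW$), one gets $|Y_{n-1}\cdot{}^{S\setminus\{s_{n-1}\}}D_n| = (n-1)\cdot 2n$; together with the trivial bound $|X_{n-1}Y_n|\le |X_{n-1}|\cdot|Y_n|$ and the inclusion $\mathrm{LHS}\subseteq\mathrm{RHS}$, equality follows without ever needing to verify uniqueness on the right directly.

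Third, the identity you cite, ``$s_0s_2s_1 = s_1s_0s_2$,'' is false in $D_n$: since $s_0$ and $s_1$ commute, the right side equals $s_0s_1s_2$, and $s_2s_1\ne s_1s_2$. The actual rewriting relies on commutations like $s_0s_i = s_is_0$ for $i\ne 2$ and the type-$A$ braid identity $s_i\cdots s_j s_{j+1} s_j\cdots s_i = s_{j+1}\cdots s_{i+1}s_is_{i+1}\cdots s_{j+1}$.

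Finally, the generating-function paragraph does no work: matching Poincar\'e polynomials does not show equality of sets. The entire content of the proof is the case-by-case verification that $yu\in X_{n-1}Y_n$ for each $y\in Y_{n-1}$ and each $u$ in an explicit list of representatives of ${}^{S\setminus\{s_{n-1}\}}D_n$, and this you have not carried out.
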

\begin{proof} We have that $|X_{n-1}|=2(n-1)$, $|Y_n|=n$ and $|^{S\setminus \{s_{n-1}\}}D_n|=\frac{2^{n-1}n!}{2^{n-2}(n-1)!}=2n$. Hence
\begin{eqnarray*}
|Y_{n-1} \cdot {^{S\setminus \{s_{n-1}\}}D_n}| &=& |Y_{n-1}|\cdot |{^{S\setminus \{s_{n-1}\}}D_n}| \\
&=& (n-1)\cdot 2n = |X_{n-1}|\cdot|Y_n|.
\end{eqnarray*}
Therefore, to prove our statement, it is sufficient to prove the inclusion 
$Y_{n-1} \cdot {^{S\setminus \{s_{n-1}\}}D_n} \, \subseteq \, X_{n-1}Y_n$.
First notice that

\begin{eqnarray*}
    {^{S\setminus \{s_{n-1}\}}D_n} &=& ^{S\setminus \{s_0,s_{n-1}\}}(D_n)_{\{s_1,\ldots,s_{n-1}\}} \\ && \uplus \, \, s_{n-1}\cdots s_2\{s_0,s_1s_0, s_1s_0s_2,\ldots,s_1s_0s_2\cdots s_{n-1}\} \\
    && \\
   &=& ^{S\setminus \{s_0,s_{n-1}\}}(D_n)_{\{s_1,\ldots,s_{n-1}\}} \uplus \, \, s_{n-1}\cdots s_2 (\{s_0\} \cup s_1Y_n).
\end{eqnarray*} 

Let $y\in Y_{n-1}$ and $u\in {^{S\setminus \{s_{n-1}\}}D_n}$. The previous equality implies that $u\in X_{n-1}Y_n$; so, if $y=e$, we have that $yu\in X_{n-1}Y_n$. Let $y=s_0$. If $u=e$ then $yu\in Y_{n-1}\subseteq Y_n \subseteq X_{n-1}Y_n$. If $u>e$ there are some cases to be considered:
\begin{enumerate}
    \item $u=s_{n-1}\cdots s_i$, $2<i \leq n-1$: in this case $yu=uy\in X_{n-1}Y_n$.
    \item $u=s_{n-1}\cdots s_2$: in this case $yu=s_{n-1}\cdots s_3 \cdot s_0s_2 \in X_{n-1}Y_n$.

    \item $u=s_{n-1}\cdots s_2s_1$: in this case $yu=s_{n-1}\cdots s_4 \cdot s_3s_0s_2s_1 \in X_{n-1}$.
    \item $u=s_{n-1}\cdots s_2s_0$: we have that $yu=s_{n-1}\cdots s_3s_2 \cdot s_0s_2 \in X_{n-1}Y_n$.
    \item $u=s_{n-1}\cdots s_2s_1 \cdot s_0s_2 \cdots s_i$, $2<i<n-1$: we have that $yu=s_{n-1}\cdots s_3s_0s_2s_1 \cdot s_0s_2 \cdots s_i \in X_{n-1}Y_n$.
\end{enumerate}
Now assume $\ell(y)>1$ and $u\in {^{\{s_1,\ldots,s_{n-2}\}}}(D_n)_{\{s_1,\ldots,s_{n-1}\}}$. Then $y=s_0s_2\cdots s_j$ for some $2\leq j \leq n-2$, and $u=s_{n-1}\cdots s_i$, for some $i \in [n-1]$. We consider the following cases.

\begin{enumerate}
    \item $2 \leq j<i-1$: in this case $i>3$ and $yu=uy \in X_{n-1}Y_n$.
    \item $j=i-1$: we have that $i\geq 3$ and $yu=s_{n-1}\cdots s_{i+1} \cdot s_0s_2\cdots s_i\in X_{n-1}Y_n$.
   
\item $i-1<j< n-1$, $i\geq 2$: we have that $2 \leq i \leq j$ and
    \begin{eqnarray*}
        yu&=& s_0s_2\cdots s_j \cdot s_{n-1}\cdots s_{j+1}s_j\cdots s_i \\
        &=& s_{n-1}\cdots s_{j+2} \cdot s_0s_2 \cdots s_{i-1}(s_i \cdots s_j s_{j+1}s_j \cdots s_i) \\
        &=& s_{n-1}\cdots s_{j+2} \cdot s_0s_2 \cdots s_{i-1} (s_{j+1} \cdots s_{i+1} s_i s_{i+1} \cdots s_{j+1}) \\
        &=& s_{n-1}\cdots s_{i+1} \cdot s_0s_2 \cdots s_{i-1}s_i s_{i+1} \cdots s_{j+1} \in X_{n-1}Y_n.
    \end{eqnarray*}

    \item $2 \leq j < n-1$, $i= 1$: we have that 
    \begin{eqnarray*}
        yu&=&s_0s_2\cdots s_j\cdot s_{n-1}\cdots s_1 \\
        &=& s_{n-1}\cdots s_{j+2} s_0(s_2\cdots s_j s_{j+1} s_j\cdots s_2) s_1 \\
        &=& s_{n-1}\cdots s_{j+2} s_0(s_{j+1}\cdots s_3 s_2 s_3\cdots s_{j+1}) s_1 \\
        &=& s_{n-1}\cdots s_3 s_0s_2s_1 s_3\cdots s_{j+1} \in X_{n-1}.
    \end{eqnarray*}
\end{enumerate}
Now let $y \in Y_{n-1}$ and $u=s_{n-1}\cdots s_2z$, with $z\in s_0\{e,s_1, s_1s_2,\ldots,s_1s_2\cdots s_{n-1}\}$. 
Then, by point $3.$ above, we have that:
\begin{enumerate}
    \item if $z=s_0$ then 
    \begin{eqnarray*}
        yu &=& s_{n-1}\cdots s_3 \cdot s_0s_2s_3 \cdots s_{j+1}s_0 \\
        &=& s_{n-1}\cdots s_3s_2 \cdot s_0s_2s_3 \cdots s_{j+1}  \in X_{n-1}Y_n;
    \end{eqnarray*}
    \item if $z=s_0s_1s_2\cdots s_i$, $1\leq i \leq n-1$ then
    \begin{eqnarray*}
        yu &=& s_{n-1}\cdots s_3 \cdot s_0s_2s_3 \cdots s_{j+1}s_0s_1s_2\cdots s_i \\
        &=& s_{n-1}\cdots s_3s_0s_2s_1s_3 \cdots s_{j+1} \cdot s_0s_2\cdots s_i \in X_{n-1}Y_n. \end{eqnarray*}
\end{enumerate}
\end{proof}
From the result of the previous proposition we deduce a factorization of the group $D_n$ realized by $n$ saturated chains of its Bruhat order.
\begin{cor} \label{corollario D}
    Let $n\geq 4$. Then $D_n=X_1X_2\cdots X_{n-1}Y_n$.
\end{cor}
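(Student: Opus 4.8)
The plan is to prove the identity by induction on $n$, using the standard parabolic factorization of $D_n$ together with Proposition \ref{prop quoziente} to strip off the two chains $X_{n-1}$ and $Y_n$ in a single step. Put $J:=S\setminus\{s_{n-1}\}$. Deleting the node $s_{n-1}$ from the Coxeter graph of $D_n$ (Figure \ref{fig Coxeter Dn}) leaves the subgraph on $s_0,s_1,\ldots,s_{n-2}$, which is the Coxeter graph of $D_{n-1}$ (and of type $A_3$ when $n=4$); hence the parabolic subgroup $W_J$ is of type $D_{n-1}$, and every $w\in D_n$ factors uniquely as $w=w_J\cdot{}^{J}w$ with $w_J\in W_J$, ${}^{J}w\in{}^{J}D_n$ and $\ell(w)=\ell(w_J)+\ell({}^{J}w)$. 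The key remark --- which is where a little care is needed, though it is routine --- is that the Bruhat order on $W_J$ is induced from that of $D_n$, and that the sets $X_1,\ldots,X_{n-2}$ and $Y_{n-1}$ involve only the generators $s_0,s_1,\ldots,s_{n-2}$; consequently they are exactly the chains produced by the same recipe for the Coxeter system $W_J\cong D_{n-1}$.

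Granting this identification, the induction is immediate. First one checks the base identity $D_3=X_1X_2Y_3$ (here $D_3$ is of type $A_3$); since $|X_1|\,|X_2|\,|Y_3|=2\cdot 4\cdot 3=24=|D_3|$, it suffices to verify that the product map is surjective (equivalently injective), a finite computation --- alternatively, one runs the argument of Proposition \ref{prop quoziente} in rank three, where the case analysis degenerates but still yields $D_3=X_1X_2Y_3$. For the inductive step, assume $n\geq 4$ and $W_J=X_1X_2\cdots X_{n-2}Y_{n-1}$. Then, using associativity of the set product and Proposition \ref{prop quoziente},
$$D_n=W_J\cdot{}^{J}D_n=X_1\cdots X_{n-2}\cdot\bigl(Y_{n-1}\cdot{}^{S\setminus\{s_{n-1}\}}D_n\bigr)=X_1\cdots X_{n-2}\cdot X_{n-1}Y_n,$$
which is the asserted factorization.

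The only genuine obstacle is the bookkeeping in the first paragraph: one must be sure that the handcrafted chains $X_1,\ldots,X_{n-2},Y_{n-1}$ defined inside $D_n$ literally coincide with those of $D_{n-1}$ under the isomorphism $W_{S\setminus\{s_{n-1}\}}\cong D_{n-1}$ (using that parabolic subgroups of a Coxeter system are again Coxeter systems carrying the induced Bruhat order). Once this is settled, the corollary drops out of Proposition \ref{prop quoziente}. As a byproduct, the equality $\prod_{i=1}^{n-1}|X_i|\cdot|Y_n|=2\cdot 4\cdots(2n-2)\cdot n=2^{n-1}n!=|D_n|$ forces the product map $X_1\times\cdots\times X_{n-1}\times Y_n\to D_n$ to be a bijection, and inspecting the factorizations above shows it is moreover length-additive --- which is what is needed to define a Lehmer code for $D_n$.
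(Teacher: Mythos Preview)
Your argument is correct and follows the same inductive scheme as the paper: factor $D_n = W_J\cdot{}^{J}D_n$ with $J=S\setminus\{s_{n-1}\}$, use the inductive hypothesis on $W_J\cong D_{n-1}$, and absorb $Y_{n-1}\cdot{}^{J}D_n$ into $X_{n-1}Y_n$ via Proposition~\ref{prop quoziente}. The only cosmetic difference is the base case: you push it down to the identity $D_3=X_1X_2Y_3$ (a finite check in $A_3$), whereas the paper starts at $n=4$ and verifies $D_4\subseteq X_1X_2X_3Y_4$ by first writing $(D_4)_{\{s_0,s_1,s_2\}}$ via the standard chain of quotients and then checking by hand that $\{e,s_2,s_2s_1\}\cdot s_0s_2s_1\subseteq X_2Y_3$ --- which is precisely the nontrivial part of your $D_3=X_1X_2Y_3$.
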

\begin{proof} We claim that $D_n \subseteq X_1\cdots X_{n-2}X_{n-1}Y_n$, for all $n\geq 4$. We prove the claim by induction on $n$. Let $n=4$. We have that $$D_4=(D_4)_{\{s_1\}} \cdot {^{\{s_1\}}(D_4)}_{\{s_1,s_2\}} \cdot {^{\{s_1,s_2\}}(D_4)}_{\{s_0,s_1,s_2\}} \cdot {^{S\setminus \{s_3\}}D_4}$$
and ${^{\{s_1,s_2\}}(D_4)}_{\{s_0,s_1,s_2\}}=Y_3 \cup \{s_0s_2s_1\}$.
One can check that $${^{\{s_1\}}}(D_4)_{\{s_1,s_2\}} \cdot  s_0s_2s_1 =\{s_0s_2s_1,s_2s_0s_2s_1,s_2s_1s_0s_2s_1\} \subseteq X_2Y_3.$$
Therefore, by Proposition \ref{prop quoziente}, $D_4 \subseteq X_1X_2X_3Y_4$.
    Let $n>4$ and $w=w_J({^Jw}) \in D_n$, with $J:=S\setminus \{s_{n-1}\}$. Then $w_J \in D_{n-1}$ and,
    by our inductive hypothesis, we have that $w_J\in X_1\cdots X_{n-2}Y_{n-1}$. Hence, by Proposition \ref{prop quoziente}, we obtain $w_J({^Jw})\in X_1\cdots X_{n-2}X_{n-1}Y_n$ and this proves our claim. 
\end{proof}

By Corollary \ref{corollario D}, any element $w\in D_n$ is factorized uniquely as $w=x_1\cdots x_{n-1}y$, with $y \in Y_n$ and $x_i \in X_i$, for all $i\in [n-1]$. Moreover the sets $Y_n$ and $X_i$, for all $i\in [n-1]$, are saturated chains of the Bruhat order of $D_n$. 
From these facts we obtain a Lehmer code for type $D_n$, for all $n\geq 4$.
\begin{thm} The function
    $$L_{D_n} : D_n \rightarrow \left(\prod\limits_{i=1}^{n-1}\{0,\ldots,2i-1\}\right) \times \{0,\ldots,n-1\}$$ defined by setting $$L_{D_n}(w)=(\ell(x_1),\ldots,\ell(x_{n-1}),\ell(y)),$$ for all $w\in D_n$, is a Lehmer code for type $D_n$, for all $n\geq 4$.
\end{thm}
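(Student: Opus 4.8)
The plan is to build on the factorization $D_n = X_1 X_2 \cdots X_{n-1} Y_n$ established in Corollary \ref{corollario D}, together with the subword-property observation recalled in the preliminaries, namely that if each factor of one element is Bruhat-below the corresponding factor of another, then the products are comparable. First I would verify that the map $L_{D_n}$ is well defined and bijective: by Corollary \ref{corollario D} every $w \in D_n$ has a \emph{unique} factorization $w = x_1 \cdots x_{n-1} y$ with $x_i \in X_i$ and $y \in Y_n$, so the assignment $w \mapsto (\ell(x_1), \ldots, \ell(x_{n-1}), \ell(y))$ is unambiguous. Since each $X_i$ is a saturated chain of rank $2i-1$ (so $|X_i| = 2i$, with lengths $0, 1, \ldots, 2i-1$ occurring exactly once) and $Y_n$ is a saturated chain of rank $n-1$ (so $|Y_n| = n$, with lengths $0, \ldots, n-1$), the length function restricted to each chain is a bijection onto $\{0, \ldots, 2i-1\}$ resp. $\{0, \ldots, n-1\}$. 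Hence $L_{D_n}$ maps $D_n$ into $\left(\prod_{i=1}^{n-1}\{0,\ldots,2i-1\}\right) \times \{0,\ldots,n-1\}$, and a cardinality count — $\prod_{i=1}^{n-1} 2i \cdot n = 2^{n-1}(n-1)! \cdot n = 2^{n-1} n! = |D_n|$ — combined with injectivity (which follows from uniqueness of the factorization) shows it is a bijection.

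Next I would check the rank-compatibility and the poset-morphism condition. Rank-compatibility is immediate: by Corollary \ref{corollario D} the factorization is length-additive, i.e. $\ell(w) = \ell(x_1) + \cdots + \ell(x_{n-1}) + \ell(y)$, so $\ell(w) = \rho(L_{D_n}(w))$, which is exactly the necessary condition noted after Definition \ref{def codici di L}. For the key property, that $L_{D_n}^{-1}$ is order preserving, take $a \leq b$ in the product poset $\left(\prod_{i=1}^{n-1}\{0,\ldots,2i-1\}\right) \times \{0,\ldots,n-1\}$, so $a_i \leq b_i$ for all $i$. Writing $L_{D_n}^{-1}(a) = x_1 \cdots x_{n-1} y$ and $L_{D_n}^{-1}(b) = x_1' \cdots x_{n-1}' y'$, the facts that each $X_i$ is a chain and $\ell(x_i) = a_i \leq b_i = \ell(x_i')$ force $x_i \leq x_i'$ in the Bruhat order (comparable elements of a chain are ordered by rank), and likewise $y \leq y'$ in $Y_n$. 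Then the subword-property remark from Section \ref{sezione codici espliciti} — applied to the factorization of Corollary \ref{corollario D} rather than the standard one, but the argument is identical since each $X_i$, $Y_n$ lies in a fixed two-sided coset structure — gives $L_{D_n}^{-1}(a) = x_1 \cdots x_{n-1} y \leq x_1' \cdots x_{n-1}' y' = L_{D_n}^{-1}(b)$. This establishes that $L_{D_n}$ satisfies Definition \ref{def codici di L}.

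The main obstacle is the last step: one must be sure that the subword-property argument genuinely applies to the factorization $w = x_1 \cdots x_{n-1} y$ even though the chains $X_i$ and $Y_n$ are not exactly the minimal-coset representatives ${}^{\{s_1,\ldots,s_{i-1}\}}(D_n)_{\{s_1,\ldots,s_i\}}$ appearing in the standard Bruhat-form factorization — they are modified by appending extra tails involving $s_0$, as in the definitions of $X_2, \ldots, X_{n-1}$ and $Y_n$. Here the content of Proposition \ref{prop quoziente} is exactly what repairs this: it identifies $Y_{n-1} \cdot {}^{S \setminus \{s_{n-1}\}}D_n$ with $X_{n-1} Y_n$, which is precisely the induction step making Corollary \ref{corollario D}'s factorization length-additive and coset-compatible in the way the subword property needs. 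So the proof of the theorem is essentially the observation that Corollary \ref{corollario D} does all the work; I would spell out carefully that length-additivity plus the chain structure of each factor plus the subword-property implication together yield both bijectivity (with the correct codomain) and the order-preserving property of the inverse, and cite Corollary \ref{corollario D} and the remark preceding Section \ref{sezione tipo a} for the monotonicity step.
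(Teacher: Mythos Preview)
Your proposal is correct and follows exactly the paper's approach --- the paper gives no formal proof of this theorem, only the preceding remark that it follows from Corollary \ref{corollario D} (unique factorization into the chains $X_i$ and $Y_n$) together with the fact that those sets are saturated Bruhat chains, and your proposal simply fills in these details. The only point to tighten is that length-additivity of the factorization $w=x_1\cdots x_{n-1}y$ is not literally asserted by Corollary \ref{corollario D}; it follows either from the explicit reduced-word manipulations in the proof of Proposition \ref{prop quoziente}, or from comparing the known factorization $D_n(q)=\prod_{i=1}^{n-1}[2i]_q\cdot[n]_q$ against the bijection (the pointwise inequality $\ell(w)\leq\sum_i\ell(x_i)+\ell(y)$ plus equality of generating functions forces equality everywhere), and once you have it the subword-property step goes through exactly as you describe --- the aside about ``two-sided coset structure'' is unnecessary, since only length-additivity is used.
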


\subsection{A Lehmer code for type $H_3$}\label{lehmer H3}

Let $(H_3,S)$ be a Coxeter system of type $H_3$. We order the set $S$ as in Figure \ref{H3}. A Lehmer code can be constructed in the following way.  
\begin{figure}  
\begin{center}\begin{tikzcd}
s_1 \arrow[r, dash] & s_2  \arrow[r, dash]{r}{5} & s_3
\end{tikzcd}\caption{Coxeter graph of $H_3$.}  \label{H3} \end{center} 
\end{figure} Define three chains $X$, $Y$ and $Z$ in the Bruhat order
of $H_3$ by setting
\begin{eqnarray*}
    X&:=&\{e,s_3, s_3s_2, s_3s_2s_3, s_3s_2s_3s_2, s_3s_2s_3s_2s_1,
s_3s_2s_3s_2s_1s_2,  \\
&& s_3s_2s_3s_2s_1s_2s_3,  s_3s_2s_3s_2s_1s_2s_3s_2, s_3s_2s_3s_2s_1s_2s_3s_2s_3\}, \\
Y&:=&\{e,s_2,s_2s_1,s_3s_2s_1, s_2s_3s_2s_1, s_1s_2s_3s_2s_1\},\\
Z&:=& \{s_1,s_1s_2,s_1s_2s_1,s_1s_3s_2s_1, s_2s_1s_3s_2s_1,s_1s_2s_1s_3s_2s_1\}.
\end{eqnarray*}
Notice that $X\subseteq {^{S\setminus \{s_3\}}H_3}$. In the following proposition we provide several characterizations of the set $Y\cup Z$. For a Coxeter system $(W,S)$ and a subset $V \subseteq W$, a generalized quotient is the set
$$W/V:=\{w \in W: \ell(wv)=\ell(w)+\ell(v), \, \forall \, v  \in V\}.$$
For definitions and results about generalized quotients in Coxeter groups we refer to \cite{bjorner3}. See also \cite[Exercise 2.22]{BB}. We denote by $[u,v]_L$ an interval in the left weak order of $W$.
\begin{prop} \label{prop da inventiones} Let $x_0:=\max X$ and $z_0:=\max Z$. Then
 $$ Y\uplus Z= [e,z_0]_L=(H_3)_{\{s_1,s_2\}}\{e,s_3s_2s_1\}=H_3/{\{x_0\}}=H_3/X.$$
\end{prop}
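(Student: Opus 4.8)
The plan is to verify the chain of equalities $Y\uplus Z=[e,z_0]_L=(H_3)_{\{s_1,s_2\}}\{e,s_3s_2s_1\}=H_3/\{x_0\}=H_3/X$ by a combination of explicit computation in the dihedral parabolic $(H_3)_{\{s_1,s_2\}}\cong I_2(5)$ and general facts about generalized quotients and weak order intervals. First I would record that $(H_3)_{\{s_1,s_2\}}$ is a group of order $10$ with a transparent list of elements $e,s_1,s_2,s_1s_2,s_2s_1,s_1s_2s_1,s_2s_1s_2,\ldots$, and then check directly that right multiplication by each prefix of $s_3s_2s_1$ preserves length on each of these ten elements — i.e. that $(H_3)_{\{s_1,s_2\}}\subseteq H_3/\{s_3s_2s_1\}$ and in fact $(H_3)_{\{s_1,s_2\}}\{e,s_3s_2s_1\}$ consists of $12$ distinct elements whose lengths add as required. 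Comparing the displayed lists $Y$ and $Z$ with this product (splitting $(H_3)_{\{s_1,s_2\}}=\{e,s_2,s_2s_1,\ldots\}\cdot\{e,s_1\}$ or directly) shows $Y\uplus Z=(H_3)_{\{s_1,s_2\}}\{e,s_3s_2s_1\}$; the disjointness is forced by the length bookkeeping, since the six elements of $Y$ have lengths $0,1,2,3,4,5$ and those of $Z$ have lengths $1,2,3,4,5,6$, but one checks directly that no element appears twice.

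Next I would identify this set with the left weak order interval $[e,z_0]_L$ where $z_0=\max Z=s_1s_2s_1s_3s_2s_1$. One inclusion is immediate: every element listed in $Y\cup Z$ is a left-divisor of $z_0$ (read off a reduced word for $z_0$ and check each listed element is obtained by deleting an initial segment — more precisely that it is a prefix of some reduced word for $z_0$, using $\leq_L$). For the reverse inclusion, $z_0$ has length $6$ and $[e,z_0]_L$ is graded by length with a known number of elements; since $[e,z_0]_L$ is a subinterval of $(H_3,\leq_L)$ and $z_0\notin{}^{\{s_3\}}H_3$-type obstructions, the cleanest route is: the left descent set and a short case analysis on reduced words of $z_0$ show $|[e,z_0]_L|=12$, matching $|Y\uplus Z|$, so the inclusion already proved is an equality. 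Alternatively, one invokes \cite{bjorner3}: $[e,z_0]_L=H_3/V$ for a suitable $V$, which dovetails with the last two equalities.

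For the generalized-quotient equalities, I would use the standard fact (see \cite{bjorner3}, \cite[Exercise 2.22]{BB}) that $H_3/X$ is an interval in the left weak order whenever $X$ itself is a left weak order interval $[e,x_0]_L$ — and indeed $X$ as displayed is exactly $[e,x_0]_L$ with $x_0=s_3s_2s_3s_2s_1s_2s_3s_2s_3\in{}^{S\setminus\{s_3\}}H_3$, a chain because ${}^{S\setminus\{s_3\}}H_3$ is a chain. Then $H_3/X=H_3/\{x_0\}$ because $x_0$ is the maximum of the chain $X$ and $\ell(wx)=\ell(w)+\ell(x)$ for all $x\le_L x_0$ iff it holds for $x_0$. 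Finally $H_3/\{x_0\}=[e,z_0]_L$: since $|H_3|=120$, $\ell(x_0)=9$, and the generalized quotient $H_3/\{x_0\}$ has size $|H_3|/|{}^{S\setminus\{s_3\}}H_3|\cdot$(correction for the chain) — more safely, $w\in H_3/\{x_0\}$ iff $x_0^{-1}$ has a reduced expression appearing as a suffix of some reduced word of $w x_0$, which by a parabolic decomposition argument relative to the maximal quotient ${}^{S\setminus\{s_3\}}H_3$ pins down $H_3/\{x_0\}$ to lie in $W_{\{s_1,s_2\}}\{e,s_3s_2s_1\}$, already shown equal to $Y\uplus Z$. The main obstacle I anticipate is the bookkeeping in this last step: getting the size of $H_3/\{x_0\}$ and of $[e,z_0]_L$ to come out to exactly $12$ without simply enumerating all $120$ elements. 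I would handle it via the parabolic factorization $w=w^{S\setminus\{s_3\}}\cdot{}^{S\setminus\{s_3\}}w$ used throughout Section \ref{sezione codici espliciti}: the condition $\ell(wx_0)=\ell(w)+\ell(x_0)$ with $x_0=\max({}^{S\setminus\{s_3\}}H_3)$ forces ${}^{S\setminus\{s_3\}}w=e$, i.e. $w\in W_{S\setminus\{s_3\}}=(H_3)_{\{s_1,s_2\}}$; but that gives only $10$ elements, so one must be more careful — the correct statement is that $x_0=w_0({}^{S\setminus\{s_3\}}H_3)$-type maximality forces $w\in$ a coset union, and the extra factor $\{e,s_3s_2s_1\}$ arises exactly because $x_0$ is the top of the chain rather than $w_0$. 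Pinning this down precisely is the delicate point; everything else is finite verification.
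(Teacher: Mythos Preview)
Your plan contains several factual errors that derail the argument. First, with the labeling of Figure~\ref{H3} the edge $s_1$--$s_2$ is unlabeled, so $(H_3)_{\{s_1,s_2\}}\cong S_3$ has order $6$, not $10$; this is why $(H_3)_{\{s_1,s_2\}}\{e,s_3s_2s_1\}$ has $6\cdot 2=12$ elements. Second, $X$ is a saturated chain in the \emph{right} weak order (each element is a prefix of the next), not $[e,x_0]_L$; the implication ``$\ell(wx_0)=\ell(w)+\ell(x_0)\Rightarrow \ell(wx)=\ell(w)+\ell(x)$ for all $x\in X$'' needs $x\leq_R x_0$, not $x\leq_L x_0$. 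Third, and most damaging, $x_0$ has length $9$ while $\ell(w_0)-\ell(w_0(\{s_1,s_2\}))=15-3=12$, so $x_0$ is \emph{not} the maximum of ${}^{S\setminus\{s_3\}}H_3$; moreover $|{}^{S\setminus\{s_3\}}H_3|=20$ with top length $12$, so this quotient is not a chain. Your parabolic-decomposition attack on $H_3/\{x_0\}$ is therefore built on false premises, which is exactly why it keeps refusing to give $12$ elements.

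The idea you are missing is the one-line observation $z_0x_0=w_0$. Once you check this (a short computation, since $\ell(z_0)+\ell(x_0)=6+9=15=\ell(w_0)$), \cite[Theorem 4.4]{bjorner3} gives $H_3/\{x_0\}=[e,w_0x_0^{-1}]_L=[e,z_0]_L$ immediately, with no counting or parabolic bookkeeping needed. This is how the paper proceeds: the equalities $Y\uplus Z=[e,z_0]_L=(H_3)_{\{s_1,s_2\}}\{e,s_3s_2s_1\}$ are checked by hand (with the correct $|(H_3)_{\{s_1,s_2\}}|=6$), the equality $H_3/\{x_0\}=[e,z_0]_L$ comes from $z_0x_0=w_0$ and Bj\"orner--Wachs, and $H_3/X=H_3/\{x_0\}$ follows because $X$ is a right-weak-order chain with maximum $x_0$.
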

\begin{proof}
  Notice that $Y \cap Z = \varnothing$. It can be checked by hand that $Y\cup Z = [e,z_0]_L=(H_3)_{\{s_1,s_2\}}\{e,s_3s_2s_1\}$ and that $z_0x_0=w_0$. By \cite[Theorem 4.4]{bjorner3}, we have the equality $H_3/{\{x_0\}}=[e,z_0]_L$.
  Since $X$ is a saturated chain in the right weak order of $H_3$, we have that $H_3/X = H_3/ {\{x_0\}}$.
\end{proof}
The following corollary provides a factorization of a Coxeter group of type $H_3$ similar to a parabolic factorization $H_3^J \cdot (H_3)_J$. 
\begin{cor} \label{corollario fattorizzazione H3}
The Coxeter group $H_3$ factorizes as $H_3/X \cdot X$.    
\end{cor}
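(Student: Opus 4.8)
The plan is to show that the factorization $H_3 = (H_3/X) \cdot X$ follows from the analysis of generalized quotients already carried out in Proposition \ref{prop da inventiones}, together with a standard cardinality and length-additivity argument. The key point is that $X$ is a saturated chain in the right weak order with maximum $x_0 = \max X$, and $H_3/X = H_3/\{x_0\}$ is precisely $Y \uplus Z$, which has $6 + 6 = 12$ elements. Since $|X| = 10$ and $|H_3| = 120 = 12 \cdot 10$, the counting works out; what remains is to see that the product map $(H_3/X) \times X \to H_3$ is injective (equivalently surjective, by counting), and that $\ell$ is additive along the factorization.

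First I would recall the defining property of the generalized quotient: if $u \in H_3/X$, then for every $v \in X$ we have $\ell(uv) = \ell(u) + \ell(v)$; in particular, since $x_0 \in X$ is the maximum of the saturated chain and every element of $X$ lies below $x_0$ in the right weak order, $\ell(ux_0) = \ell(u) + \ell(x_0)$. Then I would argue injectivity of the product map: suppose $u_1 v_1 = u_2 v_2$ with $u_i \in H_3/X$ and $v_i \in X$. Using that $X$ is a chain in right weak order, one of $v_1, v_2$ is a right-weak-order prefix of the other, say $v_2 = v_1 t$ with $\ell(v_2) = \ell(v_1) + \ell(t)$ for some $t$ with $v_1 t \in X$; then $u_1 = u_2 v_1 t v_1^{-1}$... actually the cleaner route is: from $u_1 v_1 = u_2 v_2$ we get $u_2^{-1} u_1 = v_2 v_1^{-1}$, and since $v_1, v_2$ lie on a common saturated chain in right weak order, $v_2 v_1^{-1}$ (or its inverse) is again an element of a bounded interval; one then uses the characterization $H_3/X = H_3/\{x_0\}$ and the equality $H_3/\{x_0\} = [e, z_0]_L$ to pin down the factorization uniquely, exactly as in the parabolic case.

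Actually, the slickest approach avoids re-proving injectivity from scratch: I would invoke the general theory of generalized quotients (the reference \cite{bjorner3}, \cite[Exercise 2.22]{BB}), which states that for $V \subseteq W$ the product $(W/V) \cdot V$, when $V$ is a lower interval in right weak order (or more generally when the relevant conditions hold), exhausts all of $W$ with length-additive and unique factorization, provided the cardinalities match. Since Proposition \ref{prop da inventiones} identifies $H_3/X$ with $H_3/\{x_0\}$ and with the left-weak-order interval $[e, z_0]_L$ of size $12$, and since $z_0 x_0 = w_0$ with $\ell(z_0) + \ell(x_0) = \ell(w_0) = 15$, the factorization $w_0 = z_0 x_0$ is length-additive; combined with $|H_3/X| \cdot |X| = 12 \cdot 10 = 120 = |H_3|$, the product map is a bijection. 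Hence $H_3 = (H_3/X) \cdot X$.

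The main obstacle I anticipate is justifying that the product map is genuinely injective rather than merely that the cardinalities agree — surjectivity does not follow from counting alone unless injectivity (or surjectivity) is established independently. The cleanest fix is to use length-additivity: if $u \in H_3/X$ and $v \in X$, then $\ell(uv) = \ell(u) + \ell(v)$, so distinct pairs $(u, v)$ with the same product would have to have $\ell(u_1) + \ell(v_1) = \ell(u_2) + \ell(v_2)$ but, more importantly, one shows $u_1 = u_2$ by observing that $u_i$ is the unique minimal-length representative of the coset $u_i X$ — this uses that $X$, being a saturated chain in right weak order through $e$, behaves like $W_J$ in a parabolic decomposition. Once injectivity is in hand, the cardinality count $120 = 12 \cdot 10$ forces surjectivity and the proof is complete.
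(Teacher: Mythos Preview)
Your approach has a genuine gap at precisely the point you flag: the injectivity of the product map $(H_3/X)\times X \to H_3$. You try to rescue it by saying that each $u_i$ is ``the unique minimal-length representative of the coset $u_iX$,'' but $X$ is \emph{not} a subgroup (for instance $s_3,\,s_3s_2\in X$ while $s_3\cdot s_3s_2=s_2\notin X$), so there are no cosets and no canonical minimal representatives. The vague appeal to \cite{bjorner3} and \cite[Exercise 2.22]{BB} does not help either: those references establish $H_3/\{x_0\}=[e,z_0]_L$, but they do \emph{not} provide a general theorem of the form ``$(W/V)\cdot V=W$ whenever $V$ is a right-weak-order chain and the cardinalities match.'' Cardinality alone gives you nothing without injectivity or surjectivity established independently, and your sketch establishes neither.

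The paper sidesteps this difficulty entirely by reducing to the \emph{parabolic} factorization, where unique decomposition is available. Concretely, it first checks the containment $\{e,s_3s_2s_1\}\cdot X\subseteq {^{\{s_1,s_2\}}H_3}$ and then counts: both sets have $20$ elements, so $\{e,s_3s_2s_1\}\cdot X={^{\{s_1,s_2\}}H_3}$. Combining this with the identity $Y\cup Z=(H_3)_{\{s_1,s_2\}}\cdot\{e,s_3s_2s_1\}$ from Proposition~\ref{prop da inventiones} gives
\[
(Y\cup Z)\cdot X=(H_3)_{\{s_1,s_2\}}\cdot\{e,s_3s_2s_1\}\cdot X=(H_3)_{\{s_1,s_2\}}\cdot{^{\{s_1,s_2\}}H_3}=H_3,
\]
the last equality being the standard parabolic decomposition. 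This proves surjectivity directly, and then your cardinality count $12\cdot 10=120$ gives uniqueness for free. If you want to salvage your line of argument, the cleanest fix is exactly this intermediate identity $\{e,s_3s_2s_1\}\cdot X={^{\{s_1,s_2\}}H_3}$; without it, the injectivity you need does not follow from the general theory you cite.
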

\begin{proof}  Notice that
    $\{e,s_3s_2s_1\}X \subseteq {^{S\setminus \{s_3\}}H_3}$.
    Moreover $|{^{S\setminus \{s_3\}}H_3}|=|H_3|/6=20$ and $|\{e,s_3s_2s_1\}X|=20$, so $\{e,s_3s_2s_1\}X = {^{S\setminus \{s_3\}}H_3}$. By Proposition \ref{prop da inventiones}
    we have the equality $Y\cup Z=(H_3)_{\{s_1,s_2\}}\{e,s_3s_2s_1\}$ and then $(Y\cup Z)X=H_3$.
\end{proof}
Now we are ready to define a Lehmer code in type $H_3$.  
\begin{thm} \label{lemer H3}
    The function $L_{H_3} : H_3 \rightarrow \{0,1\} \times \{0,\ldots,5\} \times \{0,\ldots,9\}$ defined by 
    $$
 L_{H_3}(ux) =  \left\{
  \begin{array}{ll}
    (0,\ell(u),\ell(x)), & \hbox{if $u\in Y$;} \\
    (1,\ell(u)-1,\ell(x)), & \hbox{if $u\in Z$,}
  \end{array}
\right.$$ for all $u\in H_3/X$, $x\in X$, is a Lehmer code.
\end{thm}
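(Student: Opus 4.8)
The plan is to verify directly that $L_{H_3}$ is a Lehmer code, namely that it is a bijection onto $\{0,1\} \times \{0,\ldots,5\} \times \{0,\ldots,9\}$ whose inverse is order preserving for the Bruhat order. First I would check bijectivity and rank compatibility. By Corollary \ref{corollario fattorizzazione H3} every $w \in H_3$ factorizes uniquely as $w=ux$ with $u \in H_3/X=Y\uplus Z$ and $x \in X$, and this factorization is length--additive, $\ell(w)=\ell(u)+\ell(x)$. Since $Y$ is a chain of cardinality $6$ with $\ell$ taking the values $0,1,\ldots,5$ and $Z$ is a chain of cardinality $6$ with $\ell$ taking the values $1,2,\ldots,6$, the assignment $u \mapsto (0,\ell(u))$ for $u\in Y$ and $u\mapsto (1,\ell(u)-1)$ for $u\in Z$ is a bijection from $Y\uplus Z$ onto $\{0,1\}\times\{0,\ldots,5\}$; combined with the chain $X$ of cardinality $10$ (values $0,\ldots,9$), the full map $L_{H_3}$ is a bijection onto the target product of chains. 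Moreover $L_{H_3}(w)_1+L_{H_3}(w)_2+L_{H_3}(w)_3 = \ell(u)+\ell(x)=\ell(w)$, so $L_{H_3}$ is rank preserving; this is the necessary condition noted after Definition \ref{def codici di L}.

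Next I would establish that $L_{H_3}^{-1}$ is order preserving. Suppose $L_{H_3}(w) \leq L_{H_3}(w')$ componentwise, with $w=ux$, $w'=u'x'$. Since $X$ is a saturated (hence by the subword property, Bruhat--increasing) chain in the right weak order, $L_{H_3}(w)_3 \leq L_{H_3}(w')_3$ gives $x \leq x'$ in Bruhat order, and in fact $x \leq_R x'$. As for the first two coordinates: if $u,u'$ lie in the same chain ($Y$ or $Z$) then $\ell(u)\leq \ell(u')$ forces $u \leq u'$, even $u \leq_L u'$, because each of $Y$ and $Z$ is a chain in the left weak order (this follows from Proposition \ref{prop da inventiones}, where $Y\uplus Z=[e,z_0]_L$ is itself a union of two left--weak chains; one checks $Y$ and $Z$ are each saturated left--weak chains). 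The only remaining case is $u \in Y$ and $u' \in Z$, i.e. $L_{H_3}(w)_1=0 < 1 = L_{H_3}(w')_1$; here one must show $u \leq u'$ in Bruhat order whenever $\ell(u) \leq \ell(u')-1$. This is the one genuinely ad hoc point, and since $H_3/X = Y\uplus Z$ has only $12$ elements, I would verify it by a short finite check: list the elements of $Y$ and $Z$ by length and confirm that every element of $Y$ of length $\leq k$ is $\leq$ (Bruhat) the element of $Z$ of length $k+1$. Concretely, $Y$ in increasing order is $e, s_2, s_2s_1, s_3s_2s_1, s_2s_3s_2s_1, s_1s_2s_3s_2s_1$ and $Z$ is $s_1, s_1s_2, s_1s_2s_1, s_1s_3s_2s_1, s_2s_1s_3s_2s_1, s_1s_2s_1s_3s_2s_1$, and one reads off the required subword inclusions.

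Having reduced comparability to these component--wise facts, I would conclude: if $L_{H_3}(w)\leq L_{H_3}(w')$, then $u \leq u'$ and $x \leq x'$ in Bruhat order (in each case via the stronger weak--order statements used to stay inside the respective chains), and then by the subword property applied to the length--additive factorizations $w=ux$, $w'=u'x'$ — exactly the argument recorded in Section \ref{sezione codici espliciti} for products of chains — we get $w=ux \leq u'x' = w'$. Hence $L_{H_3}^{-1}$ is order preserving, and $L_{H_3}$ is a Lehmer code. The main obstacle is the mixed case $u\in Y$, $u'\in Z$: there is no structural reason a priori that the two chains are ``interleaved'' correctly for the Bruhat order, so this must be settled by the explicit $12$--element verification above; everything else is bookkeeping built on Corollary \ref{corollario fattorizzazione H3} and Proposition \ref{prop da inventiones}.
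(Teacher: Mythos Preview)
Your approach is essentially the paper's own: use Corollary~\ref{corollario fattorizzazione H3} for well--definedness and bijectivity, use that $X$, $Y$, $Z$ are saturated Bruhat chains, reduce the mixed case to the finite check that $\ell(y)=\ell(z)-1$ implies $y<z$ for $y\in Y$, $z\in Z$, and conclude via the subword property for length--additive products exactly as in Section~\ref{sezione codici espliciti}.

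One small correction: your parenthetical claim that $Y$ and $Z$ are each saturated chains in the \emph{left weak} order is false (for instance $s_2 \not\leq_L s_2s_1$, since the unique reduced word for $s_2s_1$ ends in $s_1$). Proposition~\ref{prop da inventiones} only says that $Y\uplus Z$ is a left weak \emph{interval}, not that $Y$ and $Z$ individually are weak--order chains. Fortunately you do not need this: what is required, and what the paper asserts, is that $Y$ and $Z$ are saturated chains in the \emph{Bruhat} order, which one reads off directly from the listed reduced words. That is all the subword--property argument uses, so your proof goes through once you replace the weak--order justification by the Bruhat--chain one.
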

\begin{proof}
    By Corollary \ref{corollario fattorizzazione H3} the function $L_{H_3}$ is well defined and it is clearly bijective. Since $X$, $Y$ and $Z$ are saturated chains of the Bruhat order, to complete the proof we have only to check that $\ell(y)=\ell(z)-1$ implies $y < z$, for all $y\in Y$, $z\in Z$. This property is easily verified.
\end{proof}

In Example \ref{es palindromi H3} we show that the $L_{H_3}$-unimodal elements, defined in the next section, are in bijection with the palindromic Poincaré polynomials of the lower Bruhat intervals in $(H_3,\leq)$.

\begin{oss} \label{oss no riflessioni H3}
    We have checked, by using SageMath, that $(H_3,\leq_R)$ (equivalently $(H_3,\leq_L)$) does not inject to $[2]\times [6] \times [10]$. This implies that there is no Lehmer code for $H_3$ defined in terms of reflections as can be done in types $A_n$. 
\end{oss}

\section{$L$-principal and $L$-unimodal elements} \label{sezione principale}
Given a Lehmer code $L$ of a finite Coxeter system $(W,S)$, we
introduce a partial order $\Lh$  on $W$ by setting 
$$u  \Lh v \, \, \Leftrightarrow \, \, L(u) \leq L(v),$$ for all $u,v \in W$. For $u \Lh v$, we let $[u,v]_{\mathrm{Lh}}:=\{z\in W: u\Lh z \Lh v\}$.
We have that $[u,v]_{\mathrm{Lh}}$ is a subposet of $[u,v]$. It is natural to study the elements $w \in W$ for which $[e,w]_{\mathrm{Lh}}=[e,w]$ as sets. For these elements, it is clear that the rank--generating function of $[e,w]$ factorizes as product of $q$-analogs involving the entries of their Lehmer codes. Hence we define a distinguished set of elements of $W$, which we call \emph{$L$--principal}.

\begin{dfn} \label{def principali}
    Let $L$ be a Lehmer code for a finite Coxeter system $(W,S)$. An element $w$ is \emph{$L$-principal} if
    $[e,w]_{\mathrm{Lh}}=[e,w]$ as sets.
\end{dfn}

We denote by $\Pr(L)$ the set of $L$-principal elements in $W$. Notice that the poset $(W,\Lh)$ is a lattice. The lattice operations are the ones induced by the lattice $\imm(L)$, i.e. $$u \curlywedge v = L^{-1}(L(u) \wedge L(v)), \, \, \, u \curlyvee v = L^{-1}(L(u) \vee L(v)),$$ for all $u,v\in W$. It turns out that for any Lehmer code also $\Pr(L)$ is a lattice. 

\begin{prop} \label{lemma reticolo}
Let $(W,S)$ be a finite Coxeter system with a Lehmer code $L$. Then the set $\Pr(L) \subseteq W$ is a sublattice of $(W,\Lh)$. In particular, it is a distributive lattice.
\end{prop}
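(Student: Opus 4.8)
The plan is to show that $\Pr(L)$ is closed under the two lattice operations $\curlywedge$ and $\curlyvee$ of $(W,\Lh)$, and then invoke the fact that a sublattice of a distributive lattice is distributive — noting that $(W,\Lh)$ is isomorphic as a poset (hence as a lattice) to the product of chains $\prod_{i=1}^n\{0,\ldots,e_i\}$ via $L$, which is distributive. So the entire content is the closure statement. The key observation is an alternative, more tractable characterization of $L$-principality: I would first argue that $w\in\Pr(L)$ if and only if, for every $v\in W$ with $L(v)\leq L(w)$, one already has $v\leq w$ in the Bruhat order; equivalently, the order ideal $J_w=\{L(u):u\leq w\}$ generated by $w$ in $\imm(L)$ is exactly the principal order ideal $\{x:x\leq L(w)\}$ of the product of chains. (The inclusion $[e,w]\subseteq[e,w]_{\mathrm{Lh}}$ always holds since $L^{-1}$ is order preserving is not quite what we need; rather the easy inclusion is $[e,w]_{\mathrm{Lh}}\supseteq\{v:L(v)\le L(w)\}$, and "principal" asks for equality of $[e,w]_{\mathrm{Lh}}$ with $[e,w]$, which by $L^{-1}$ order-preserving forces $J_w$ to be the full principal ideal below $L(w)$.)

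Granting this reformulation, let $u,v\in\Pr(L)$ and set $a=L(u)$, $b=L(v)$. I want to show $L^{-1}(a\wedge b)$ is $L$-principal, i.e. that every $t\in W$ with $L(t)\leq a\wedge b$ satisfies $t\leq L^{-1}(a\wedge b)$ in the Bruhat order. From $L(t)\le a$ and $u$ principal we get $t\le u$; from $L(t)\le b$ and $v$ principal we get $t\le v$. Now the point is to produce the element $L^{-1}(a\wedge b)$ as a Bruhat-lower bound that dominates $t$. Here I would use that $L^{-1}$ is order preserving: $L^{-1}(a\wedge b)\leq u$ and $L^{-1}(a\wedge b)\leq v$. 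For the join, let $c=a\vee b$; I must show every $t$ with $L(t)\leq c$ lies Bruhat-below $L^{-1}(c)$. The delicate half is the join, because $L^{-1}(c)$ need not be $u\vee v$ in Bruhat order — indeed $u\vee v$ may not exist.

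I expect the main obstacle to be precisely the join case, and the natural tool is the subword property of the Bruhat order together with the length-additive factorizations $w=w^{(1)}\cdots w^{(n)}$ used throughout Section~\ref{sezione codici espliciti} — but of course the abstract statement does not assume any explicit Lehmer code, so that route is not available in general. Instead I would try to run the argument entirely inside $\imm(L)$ and exploit the hypothesis that \emph{both} $u$ and $v$ are principal to deduce that $J_{L^{-1}(c)}$, which contains $J_u\cup J_v$ (as $L^{-1}$ order preserving gives $u,v\le L^{-1}(c)$, hence $J_u,J_v\subseteq J_{L^{-1}(c)}$), actually equals the principal ideal below $c$: since $J_u=\{x:x\le a\}$ and $J_v=\{x:x\le b\}$ by principality, their union generates (as an order ideal) exactly $\{x:x\le a\text{ or }x\le b\}$, and one checks this equals $\{x:x\le a\vee b\}$ precisely when the product-of-chains lattice has the property that every element $\le a\vee b$ is $\le a$ or $\le b$ — which is \emph{false} in general. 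So the correct finish must instead use a covering/induction argument: take $t$ with $L(t)\le c$, write a saturated chain in $\imm(L)$ from $L(t)$ up to $c$, and lift it one covering step at a time, at each step staying Bruhat-below an appropriate join of conjugates — carefully using that interpolating elements between $a$ (resp.\ $b$) and $c$ are automatically principal when $u,v$ are, which is the genuinely nontrivial lemma. Establishing that the interval $[u, L^{-1}(c)]_{\mathrm{Lh}}$ consists of principal elements, hence that principality propagates upward along the two "legs" $[a,c]$ and $[b,c]$, is the step I anticipate as hardest, and once it is in place the closure under $\curlyvee$ (and the easier $\curlywedge$) follows, completing the proof that $\Pr(L)$ is a sublattice and therefore a distributive lattice.
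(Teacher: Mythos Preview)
Your characterization of $L$-principality is backwards, and this derails the rest of the proposal. By definition of a Lehmer code, $L^{-1}$ is order preserving, so the implication $L(v)\le L(w)\Rightarrow v\le w$ holds for \emph{every} $w$; equivalently, the inclusion $[e,w]_{\mathrm{Lh}}\subseteq [e,w]$ is automatic. Principality is the reverse inclusion: $w\in\Pr(L)$ means precisely that $v\le w$ in Bruhat order forces $L(v)\le L(w)$. You half-notice this in your parenthetical aside but then proceed with the wrong direction anyway.

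With the correct characterization the $\curlywedge$-closure is immediate, and this is exactly the paper's argument: given $a_1,a_2\in\Pr(L)$, set $a:=a_1\curlywedge a_2$ and take any $w\le a$ in Bruhat order. Since $a\Lh a_i$ we have $a\le a_i$, hence $w\le a_i$; principality of $a_i$ then gives $w\Lh a_i$ for $i=1,2$, so $L(w)\le L(a_1)\wedge L(a_2)=L(a)$ and $w\Lh a$. Your version of this step (``from $L(t)\le a$ and $u$ principal we get $t\le u$'') invokes principality for the direction that is free, and then stalls trying to deduce $t\le L^{-1}(a\wedge b)$ from $t\le u$ and $t\le v$ --- which the Bruhat order, not being a lattice, does not provide.

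As for the join: the paper's proof stops after the $\curlywedge$-closure and writes ``this concludes the proof''; it does not address $\curlyvee$-closure at all. The elaborate strategy you sketch for the join --- propagating principality upward along the legs $[a,c]$ and $[b,c]$ via an auxiliary lemma --- has no counterpart in the paper, and much of the difficulty you encounter there is an artifact of working with the inverted characterization.
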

\begin{proof}
    Let $a_1,a_2\in \Pr(L)$, 
    $a:=a_1 \curlywedge a_2$ and $w\leq a$.
    We have that $a \Lh a_i$, for all $i\in [2]$. Since $a_i \in \Pr(L)$, we have that $a \leq a_i$, for all $i\in [2]$. Hence $w \leq a_i$, for all $i\in [2]$, i.e. $w \Lh a_i$, for all $i\in [2]$, by the $L$-principality of $a_i$. We have proved that $w\Lh a$ for all $w\leq a$, i.e. $a\in \Pr(L)$, and this concludes the proof.
\end{proof}
\begin{oss}\label{bruhat come segre} Notice that, by definition of $L$-principality, we have the equality of posets $(\Pr(L),\leq)=(\Pr(L),\Lh)$. In types $A_n$ and $B_n$, the principal elements for the Lehmer codes $L_{A_n}$ and $L_{B_n}$ defined in Section \ref{sezione codici espliciti} are known in literature as \emph{Kempf elements}.
\end{oss}

\begin{ex} By \cite[Propositions 4.3 and 4.9]{Huneke}, the elements of $\Pr(L_{A_n})$ and $\Pr(L_{B_n})$  are exactly the Kempf elements in type $A_n$ and $B_n$, respectively. It is known that Kempf elements in type $A_n$ coincide with $312$--avoiding permutations of $S_{n+1}$, see Section \ref{sezione6} and references therein. 
\end{ex}
 
It is clear that for an $L$-principal element $w\in W$ we have that the Poincaré polynomial of $[e,w]$ is $$h_w(q)=\prod\limits_{i=1}^{|S|}[L(w)_i+1]_q.$$ In general, for a finite Coxeter system, a more involved formula for the cardinality of $[e,w]$ holds, as we prove in the following theorem.

\begin{thm} \label{teorema maduro}
Let $L$ be a Lehmer code of $(W,S)$ and $w\in W$. Then 
     $$h_w(q)=\sum\limits_{\varnothing \neq X\subseteq \max\{L(v): v\leq w\}}(-1)^{|X|+1}\prod\limits_{i=1}^{|S|}[(\wedge_{x\in X}\,x)_i+1]_q.$$
\end{thm}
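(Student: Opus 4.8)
The plan is to interpret both sides as counting functions on the multicomplex $I_w:=\{L(v):v\leq w\}$ and to invoke inclusion--exclusion on its maximal elements. Since $L^{-1}$ is a poset morphism, the set $I_w$ is an order ideal of $[e_1+1]\times\cdots\times[e_n+1]$; concretely, $z\in I_w$ if and only if $z\leq x$ for some $x\in\max I_w$. Moreover, because $\ell(v)=\rho(L(v))$ for every $v\in W$, we have $h_w(q)=\sum_{z\in I_w}q^{\rho(z)}$. So it suffices to prove the polynomial identity
$$\sum_{z\in I_w}q^{\rho(z)}=\sum_{\varnothing\neq X\subseteq\max I_w}(-1)^{|X|+1}\prod_{i=1}^{n}\bigl[(\wedge_{x\in X}x)_i+1\bigr]_q.$$

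First I would record the elementary observation that, for a single $x\in[e_1+1]\times\cdots\times[e_n+1]$, the principal order ideal $\{z:z\leq x\}=\prod_{i=1}^n[x_i]$ has rank--generating function $\prod_{i=1}^n[x_i]_q=\prod_{i=1}^n[((x)_i-1)+1]_q$; after the shift built into $\wedge$ this matches a single term $\prod_i[(\wedge_{x\in X}x)_i+1]_q$ when $X=\{x\}$ with the convention that the entries of elements of $\max I_w$ already absorb the $+1$. The key structural fact is that the meet in the product-of-chains lattice is componentwise minimum, hence for any nonempty $X\subseteq\max I_w$ the principal ideal below $\wedge_{x\in X}x$ equals $\bigcap_{x\in X}\{z:z\leq x\}$. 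Therefore the right-hand side is exactly the inclusion--exclusion expansion of $\bigl|\bigcup_{x\in\max I_w}\{z:z\leq x\}\bigr|$ refined by the weight $q^{\rho(z)}$: each $z\in I_w$ lies in $\{z:z\leq x\}$ for precisely the nonempty subcollection $X_z:=\{x\in\max I_w: z\leq x\}$, and $\sum_{\varnothing\neq X\subseteq X_z}(-1)^{|X|+1}=1$, so every $z\in I_w$ is counted with total coefficient $q^{\rho(z)}$ and nothing outside $I_w$ contributes.

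Concretely the steps are: (i) rewrite $h_w(q)$ as $\sum_{z\in I_w}q^{\rho(z)}$ using $\ell=\rho\circ L$ and the fact that $I_w$ is an order ideal; (ii) express $I_w=\bigcup_{x\in\max I_w}\prod_{i=1}^n[x_i]$; (iii) apply inclusion--exclusion for the $q^{\rho}$-weighted count over this union of principal ideals; (iv) identify the intersection of principal ideals below $x^{(1)},\dots,x^{(m)}$ with the principal ideal below their componentwise minimum $\wedge_j x^{(j)}$, whose weighted count is $\prod_{i=1}^n[(\wedge_j x^{(j)})_i]_q$; and (v) reconcile the shift so that $[(\wedge_{x\in X}x)_i+1]_q$ in the statement agrees with $[(\wedge_j x^{(j)})_i]_q$ from the computation --- this is just a matter of whether one works with $\{L(v):v\leq w\}$ or with $J_w=\{L(v)+(1,\dots,1):v\leq w\}$, and the statement's convention is the former, where the maxima have entries already equal to $L(v)_i$ and $[L(v)_i+1]_q$ is the size of a chain of rank $L(v)_i$.

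The only real subtlety, and what I expect to be the main point to get right, is the bookkeeping of the $+1$ shifts and being careful that $\max\{L(v):v\leq w\}$ is taken in the product of chains $\prod_i\{0,\dots,e_i\}$ (so entries can be $0$) rather than in the shifted copy; once the meet is correctly identified as componentwise minimum in that poset and the principal ideal below a point $x$ is seen to be a product of chains of ranks $x_1,\dots,x_n$ contributing $\prod_i[x_i+1]_q$, the argument is a clean weighted inclusion--exclusion with no further obstacle. Distributivity of the lattice $\imm(L)$ (noted before Proposition~\ref{lemma reticolo}) guarantees meets behave well, but in fact only the product-of-chains structure is needed here.
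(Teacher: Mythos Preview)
Your proposal is correct and follows essentially the same route as the paper: both argue by inclusion--exclusion on the cover of $[e,w]$ (equivalently, of the order ideal $I_w$) by principal ideals indexed by $\max\{L(v):v\leq w\}$, using that intersections of such principal ideals are again principal below the componentwise minimum. The paper phrases this on the $W$ side via the $\Lh$-intervals $[e,L^{-1}(z)]_{\mathrm{Lh}}$ and their meets $\curlywedge$, while you work directly in the product of chains, but the content is identical; your only wobble is the notational slip in writing the principal ideal as $\prod_i[x_i]$ with generating function $\prod_i[x_i]_q$ before correcting the shift in step~(v).
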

\begin{proof}
   For $w\in W$ we have that, as sets, $$[e,w]=\bigcup\limits_{z \in \max\{L(v): v\leq w\}}[e,L^{-1}(z)]_{\mathrm{Lh}}.$$ Moreover the rank--generating function of the poset $[e,x]_{\mathrm{Lh}} \cap [e,y]_{\mathrm{Lh}}=[e,x\curlywedge y]_{\mathrm{Lh}}$ is equal to the polynomial $\prod_{i=1}^{|S|}[L(x \curlywedge y)_i+1]_q$, for all $x,y\in W$. By the Inclusion-Exclusion principle we obtain the result. 
\end{proof}

\begin{ex}
    Let $w:=3412 \in S_4$ and $L:=L_{A_3}$. Then $L(w)=(0,2,2)$
    and $\max_{\Lh}[e,w]=\{2413,3214,3412\}$. We have that $2413 \curlywedge 3214 = 2134$, $2413 \curlywedge 3412 = 1423$, $3214 \curlywedge 3412 = 3124$ and $2413\curlywedge 3214 \curlywedge 3412 = 1234$. 
    Therefore
    \begin{eqnarray*}
        h_w(q)&=& \prod\limits_{i=1}^3[L(2413)_i+1]_q+\prod\limits_{i=1}^3[L(3214)_i+1]_q \\ && +\prod\limits_{i=1}^3[L(3412)_i+1]_q - \prod\limits_{i=1}^3[L(2134)_i+1]_q \\
        && -\prod\limits_{i=1}^3[L(1423)_i+1]_q
        -\prod\limits_{i=1}^3[L(3124)_i+1]_q
        \\ && + \prod\limits_{i=1}^3[L(1234)_i+1]_q \\
        &=& 2(1+q)(1+q+q^2)+(1+q+q^2)^2  \\
        && -(1+q)-2(1+q+q^2)+ 1
        \\ &=& 1+3q+5q^2+4q^3+q^4.
    \end{eqnarray*}
\end{ex} 
Now we define a subset of $\Pr(L)$ whose elements are involved in the description of palindromic Poincaré polynomials.
Given a Coxeter system $(W,S)$ with a Lehmer code $L$, we define the $L$-orbit of $w\in \Pr(L)$ by setting
$$O_w:=\{L(u): u \in \Pr(L)\}\cap \{(L(w)_{\sigma(1)},\ldots,L(w)_{\sigma(n)}): \sigma \in S_n\},$$ where $n:=|S|$.

\begin{dfn} \label{def unimodale}
We say that an $L$-principal element $w\in W$ is \emph{$L$-unimodal} if $L(w)=\min_{\leq_{\lex}} O_w$. 
\end{dfn}
We denote by $U(L)$ the set of $L$-unimodal elements in $W$. In Section \ref{sezione6}, for the Lehmer code $L_{A_n}$ of $S_{n+1}$, we prove that $U(L_{A_n})$ is the set of unimodal permutations in $S_{n+1}$.  

We observe that the inclusion $U(L)\subseteq \Pr(L)$ implies the poset equality $(U(L),\leq)=(U(L),\Lh)$, for every Lehmer code $L$.
Given a Coxeter system $(W,S)$ we define $$\Pal(W,S):=\left\{h_w(q): w\in W, \,  q^{\ell(w)}h_w(q^{-1})= h_w(q)\right\}.$$ 
It is clear by definition that 
\begin{equation}\label{equazione pal}\{h_w(q): w \in U(L)\} = \{h_w(q): w \in \Pr(L)\}\subseteq \Pal(W,S)\end{equation} 
and $|U(L)|=|\{h_w(q): w \in U(L)\}|$, for every Lehmer code $L$ of a finite Coxeter system $(W,S)$. In Section \ref{sezione6} we prove that in type $A_n$, for the Lehmer code $L_{A_n}$, we have the equalities
$$|\Pal(A_n)|=|\{h_w(q): w \in U(L_{A_n})\}|=2^n,$$ for all $n\in \N$.

In the following example we show that in general the previous equality does not hold. 

\begin{ex}
    It can be checked by using a computer that $|\Pal(B_3)|>|U(L_{B_3})|$. The linear order on the generators given in
    Figure \ref{tipo B 2} provides a Lehmer code $\tilde{L}_{B_n}$ for type $B_n$, by a construction similar to the one of Theorem \ref{teorema Bn}. In this case we have that $|\Pal(B_3)|=|U(\tilde{L}_{B_3})|$ but $|\Pal(B_4)|>|U(\tilde{L}_{B_4})|$. Finally, similar computations show that $|\Pal(D_4)|=|U(L_{D_4})|$ and $|\Pal(D_5)|>|U(L_{D_5})|$.
    \begin{figure}  
\begin{center}\begin{tikzcd}
s_2 \arrow[r, dash]{r}{4} & s_1 \arrow[r, dash] & s_3 \arrow[r, dash] & \cdots \arrow[r, dash] & s_n 
\end{tikzcd}\caption{Coxeter graph of $B_n$.}  \label{tipo B 2} \end{center} 
\end{figure}
\end{ex}

In the next example we classify palindromic Poincaré polynomials of lower Bruhat intervals in type $H_3$. This classification relies on the set of unimodal elements with respect to the Lehmer code $L_{H_3}$ defined in Section \ref{lehmer H3}. 

\begin{ex} \label{es palindromi H3}
Consider the Lehmer code $L_{H_3}$ of Theorem \ref{lemer H3}. We have that 
   $$\Pal(H_3)= \left\{h_w(q): w \in \Pr(L_{H_3})\right\}=\left\{h_w(q): w \in U(L_{H_3})\right\},$$ where the first equality can be checked by a SageMath computation and the second equality follows from \eqref{equazione pal}.
Figure \ref{figura principale} depicts the poset of all palindromic Poincaré polynomials of lower Bruhat intervals in type $H_3$: each element $(x_1,x_2,x_3)$ in this poset corresponds to an $L_{H_3}$-unimodal element whose lower Bruhat interval has Poincaré polynomial equal to $[x_1+1]_q[x_2+1]_q[x_3+1]_q$.

\begin{figure} \begin{center}\begin{tikzpicture}
\matrix (a) [matrix of math nodes, column sep=0.9cm, row sep=0.7cm]{
    &    & (1,5,9)  &   &   \\
   &    & (1,5,4)  &   &   \\
     &    & (1,4,4)  &   &   \\
       &    & (1,3,4)  &   &   \\
         &    & (1,2,4)  &   &   \\
           & (1,1,4)   &   & (1,2,3)  &   \\
        (0,1,4)     &    & (1,1,3)  &   &  (1,2,2) \\
               &   (0,1,3) &   &  (1,1,2) &   \\
                 &  (0,1,2)  &   & (1,1,1)  &   \\
                   &    &  (0,1,1) &   &   \\
                     &    & (0,0,1)  &   &   \\
                       &    & (0,0,0)  &   &   \\};

\foreach \i/\j in {1-3/2-3, 2-3/3-3, 3-3/4-3, 4-3/5-3, 5-3/6-2, 5-3/6-4,
6-4/7-3, 6-4/7-5, 6-2/7-1, 6-2/7-3, 7-3/8-2, 7-3/8-4, 7-5/8-4, 7-1/8-2,
8-2/9-2, 8-4/9-2, 8-4/9-4, 9-2/10-3, 9-4/10-3, 10-3/11-3, 11-3/12-3}
    \draw (a-\i) -- (a-\j);
\end{tikzpicture} \caption{Hasse diagram of the Bruhat order of $U(L_{H_3})$.} \label{figura principale} \end{center} \end{figure}  
\end{ex}

\section{Unimodal permutations and Poincaré polynomials of smooth Schubert varieties} \label{sezione6} 

In this section we consider the symmetric group $S_n$ with its standard Coxeter presentation. For the Lehmer code $L_{A_{n-1}}$, principal elements are exactly Kempf elements in type $A_{n-1}$ and these are the permutations in $S_n$ avoiding the pattern $312$. We prove that $U(L_{A_{n-1}})$ is the set of unimodal permutations in $S_n$. The main result of this section is the classification of Poincaré polynomials of smooth Schubert varieties in complex flag manifolds. These are covered by the rank--generating functions for lower Bruhat intervals of unimodal permutations, as we prove in Theorem \ref{teorema lisce}.

Let $P(n)$ be the set of partitions of a non--negative integer $n$. To write a partition we adopt French notation. Then an element $\lambda \in P(n)$ is either $(0)$, if $n=0$, or a tuple $(\lambda_1,\ldots,\lambda_r)$, where $0<\lambda_1\leq \lambda_2\leq \ldots \leq \lambda_r$ and $\sum_{i=1}^r\lambda_i=n$, if $n>0$. Given a partition $(\lambda_1,\ldots,\lambda_r)\in P(n)$, we define $[\lambda]:=[r]$ and, given $k \in \mathbb{N}$, $m_k(\lambda):=|\{i \in [r]: \lambda_i=k\}|$.
We denote by $\lambda^*$ the \emph{dual partition} (or conjugate partition), i.e. the unique partition determined by the condition $m_k(\lambda^*):=\lambda_{r-k+1}-\lambda_{r-k}$, for all $k\in [r]$ and where $\lambda_0:=0$. The partition $(0) \in P(0)$ is self--dual.

The set of \emph{smooth permutations} in $S_n$ is defined by setting $$\Sm_n:=\{w\in S_n: \mbox{$w$ avoids $3412$ and $4231$}\}.$$ 
It is known that $w\in \Sm_n$ if and only if
$$h_w(q)= \sum\limits_{v\leq w}q^{\ell(v)}=\prod\limits_{i\in [E(w)]}[E(w)_i+1]_q,$$
where $E: \Sm_n \rightarrow P(\ell(w))$ is the function which assigns to any element $w\in \Sm_n$ its \emph{exponents} (see \cite[Theorem 1.1]{gasharov}). To define $E(w)$, following \cite[Remark 2.8]{gasharov}, we need some preparation.

For $w\in S_n$ define a poset $P_w =([n],\leq_w)$ by setting $i<_w j$ if and only if $i<j$ and $w^{-1}(j)< w^{-1}(i)$, for all $i,j\in [n]$. The comparability graph of the poset $P_w$ is known as a \emph{permutation graph}. For a finite poset $P$, we define the poset $C(P)$ of saturated chains of $P$, ordered by inclusion. It is a ranked poset; we denote by $\rho$ its rank function and define $\rho_i:=|\{c\in C(P):\rho(c)=i\}|$, for all $i\geq 0$, and, if $P$ is not trivial, $m_P:=\max\{\rho_i: i\geq 1\}$. For a ranked poset $P$ of cardinality $n$ whose Hasse diagram is a forest, Proposition \ref{prop foreste} allows to define a partition into distinct parts.

\begin{prop} \label{prop foreste}
    Let $P$ be a finite poset whose Hasse diagram is a forest. If $0\leq h<k \leq m_P$, then $\rho_h>\rho_k$.
\end{prop}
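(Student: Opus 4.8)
The plan is to induct on the rank $k$, proving the chain of strict inequalities $\rho_1 > \rho_2 > \cdots > \rho_{m_P}$, where $\rho_i$ counts the saturated chains of $P$ of rank $i$ (i.e. chains with $i+1$ elements, or equivalently $i$ covering relations). The key structural input is that the Hasse diagram of $P$ is a forest: every element of $P$ covers at most one other element. This means that a saturated chain $x_0 \vartriangleleft x_1 \vartriangleleft \cdots \vartriangleleft x_i$ of rank $i$ is completely determined by its top element $x_i$ together with the rank $i$; indeed, from $x_i$ there is a unique way to descend, since each $x_j$ (for $j \geq 1$) covers a unique element. So I would set up the combinatorics in terms of this forest structure rather than the chain poset $C(P)$ directly.

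First I would reformulate: for $v \in P$, let $\mathrm{ht}(v)$ denote the length of the (unique) saturated chain from a minimal element of $P$ up to $v$ in the forest — this is well defined precisely because in a forest each non-minimal vertex has a unique parent. Then $\rho_i = |\{v \in P : \mathrm{ht}(v) = i\}|$ for $i \geq 1$, since each such $v$ is the top of exactly one rank-$i$ saturated chain and conversely. (For $i=0$, $\rho_0$ counts the minimal elements.) The claim $\rho_h > \rho_k$ for $1 \leq h < k \leq m_P$ now becomes: the number of vertices at height $h$ strictly exceeds the number at height $k$. Here $m_P = \max\{i \geq 1 : \rho_i \neq 0\}$ is the largest height attained.

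The heart of the argument is then an injection-with-slack: I would build, for each $1 \leq h < k$, an injective map from the set of height-$k$ vertices into the set of height-$h$ vertices that is \emph{not} surjective. The natural map sends a height-$k$ vertex $v$ to its unique ancestor at height $h$ (obtained by walking down the forest $k - h$ steps). This is well defined since $\mathrm{ht}(v) = k > h \geq 1$ guarantees the ancestor exists. It is injective: if $v, v'$ at height $k$ had the same height-$h$ ancestor $u$, then $v$ and $v'$ both lie in the subtree rooted at $u$; but walking \emph{up} from $u$ along saturated chains — and here one must be slightly careful, since a vertex can have several children (the forest condition constrains parents, not children) — actually injectivity needs the observation that within one subtree two distinct vertices at the same height cannot coincide, which is trivial, but they could have different ancestors; so in fact the map $v \mapsto (\text{ancestor at height } h)$ need not be injective. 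I expect \textbf{this to be the main obstacle}: one cannot naively injectively compress a level of a tree into a lower level, because trees branch upward.

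To get around this I would instead induct downward on $m_P$ and use the pruning idea: it suffices to prove $\rho_{m_P - 1} > \rho_{m_P}$ and then remove all vertices of maximal height to obtain a poset $P'$ that is still a forest with $m_{P'} = m_P - 1$ and $\rho_i(P') = \rho_i(P)$ for all $i \leq m_P - 2$, while $\rho_{m_P-1}(P') \geq \rho_{m_P-1}(P)$ — wait, removing leaves at the top only can decrease $\rho_{m_P-1}$ if some height-$(m_P-1)$ vertex was itself a leaf; but those are not removed. Removing the top level leaves all height-$(m_P-1)$ vertices in place, so $\rho_{m_P-1}(P') = \rho_{m_P-1}(P)$, and by induction $\rho_1(P') > \cdots > \rho_{m_P-1}(P')$, giving the result once the single inequality $\rho_{m_P-1}(P) > \rho_{m_P}(P)$ is established. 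For that last inequality: every vertex $v$ of maximal height $m_P$ has a parent at height $m_P - 1$; distinct vertices of maximal height that share a parent $u$ are siblings, so the map $v \mapsto \mathrm{parent}(v)$ goes from the top level to level $m_P - 1$; it is injective \emph{if} no two maximal vertices are siblings, which is false in general. Hence the genuinely correct bound is: $\rho_{m_P} \leq \rho_{m_P - 1}$ always (each maximal vertex sits in a distinct... no). I will therefore argue via a weight/counting identity instead — count pairs (vertex at height $m_P$, its parent) and compare to the total — or, cleanly, observe $\sum_{v \text{ at height } m_P-1} (\#\text{children of } v \text{ at height } m_P) = \rho_{m_P}$, and since at least one height-$(m_P-1)$ vertex is itself a leaf of the Hasse diagram restricted upward... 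This final counting step, reconciling branching with the strict inequality, is where I expect the real work to lie, and I would resolve it by the pruning induction above combined with the observation that $P$ being a forest forces, at the top, strictly more chains ending one step lower.
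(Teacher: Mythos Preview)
Your proposal rests on a misreading of the hypothesis. That the Hasse diagram of $P$ is a forest means only that the underlying \emph{undirected} graph is acyclic; it does \emph{not} imply that every element covers at most one other element. For instance, in $P=\{a,b,c\}$ with $a<c$, $b<c$ and $a,b$ incomparable, the Hasse diagram is the path $a\text{--}c\text{--}b$, a tree, yet $c$ covers two elements. Consequently your height function is not well defined in general (in $\{a,b,c,d\}$ with $a<c$, $b<c$, $c<d$ there are two distinct saturated descents from $d$ to a minimal element), and the identification of $\rho_i$ with a level count collapses entirely. Even granting your intended rooted--forest model, the formula is off: a rank--$i$ saturated chain is determined by its top $v$ together with $i$, but $v$ need only satisfy $\mathrm{ht}(v)\geq i$, not $\mathrm{ht}(v)=i$, so one would obtain $\rho_i=|\{v:\mathrm{ht}(v)\geq i\}|$ --- which, ironically, would make the strict decrease immediate and render all of your subsequent injection and pruning manoeuvres unnecessary.

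The paper proceeds quite differently: it inducts on $|P|$ rather than on rank. One fixes a maximal saturated chain $C$ of length $m_P$, uses the fact that a tree has at least two leaves to locate a leaf $x\notin C$, and deletes it; the inductive hypothesis on $P\setminus\{x\}$ together with control of how each $\rho_j$ changes under this deletion is then meant to yield the strict inequalities. Your instinct that the ``final counting step'' is where the real difficulty lies is well founded. Indeed, consider the poset on $\{a,b,c,d,d',e,x\}$ with coverings $a\lessdot b\lessdot c$, $x\lessdot c$, $c\lessdot d\lessdot e$ and $c\lessdot d'$: its Hasse diagram is a tree, yet $\rho_1=\rho_2=6$. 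So the statement needs further hypotheses to hold as written, and the paper's bookkeeping claim that deleting a single leaf changes each $\rho_j$ by at most one likewise fails on this same example. Any correct argument will have to exploit more than the bare forest condition.
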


\begin{proof}
It is sufficient to prove the result for trees. The condition $m_P>0$ implies $|P|>1$.
    We proceed by induction on $|P|$. If $|P|=2$ then $m_P=1$ and $\rho_0=2>1=\rho_1$. Let $|P|>2$. If $|\min P|=|\max P|=1$ then $P$ is a chain and the result is obvious. Assume $|\max P| \neq 1$. Let $C$ be a maximal saturated chain of length $m_P$. We claim that there exists a leaf of $P$ not belonging to $C$. Recall that in a tree there are at least two leaves. Suppose first that $P$ has exactly two leaves. In this case it is clear that $C$ cannot have these two leaves as extremal points and the claim follows. If $P$ has more than two leaves, the claim follows immediately. This implies that there exists $x\in P$  such that $m_{P\setminus \{x\}}=m_P$.  Let $P':=P\setminus \{x\}$. Then $\rho'_j\in \{\rho_j-1, \rho_j\}$, for all $j\in [m_P]$. 
    We have that
    $\rho'_h=\rho_h$ implies $\rho'_k=\rho_k$, for all $0\leq h<k \leq m_P$.
    Hence, if $\rho'_h=\rho_h$, by our inductive hypothesis we have
    $\rho_h=\rho'_h> \rho'_k =\rho_k$.
    If $\rho'_h=\rho_h-1$, again by our inductive hypothesis,
    $\rho_h=\rho'_h+1> \rho'_k+1 \geq \rho_k$, for all $0\leq h<k \leq m_P$.
\end{proof} Given a smooth permutation $w$, the Hasse diagram of the poset $P_w$ is a forest (see \cite[Section 3]{Forest-like permutations}).
For $w\in \Sm_n \setminus \{e\}$, by considering the poset $C(P_w)$  and Proposition \ref{prop foreste}, we define a partition 
\begin{equation} \label{eq partizione}
  \lambda_w:=(\rho_{m_P},\ldots, \rho_1) \in P(\ell(w)). 
\end{equation} We let $\lambda_e:=(0)$. 
Then the function $E$ is defined by setting $E(w):=\lambda^*_w$, for all $w\in \Sm_n$.

In \eqref{equazione Ln} we have defined a function $L_n : S_n \rightarrow \prod_{i=0}^{n-1}\{0,\ldots,i\}$. By Theorem \ref{teorema cocodice}, this function satisfies $$L_n(w)_k= |\{i \in [n]: i<k, \, w^{-1}(i)>w^{-1}(k)\}|,$$ for all $k\in [n]$, $w\in S_n$. With a slight abuse of language, from now on we say that $L_n$ is a Lehmer code for $S_n$.
In order to describe $L_n$-principal elements in Proposition \ref{teorema catalani}, we study permutations $w \in S_n$ avoiding the pattern $312$. For these permutations we can easily relate the partition $\lambda^*_w$ to the Lehmer code $L_n(w)$.  
\begin{prop} \label{prop 312}
    Let $w\in S_n$ be a $312$--avoiding permutation and $\lambda := \lambda^*_w$. Then
    $$\{\lambda_h: h \in [\lambda]\} \cup \{0\}= \{L_n(w)_k: k\in [n]\}.$$ In particular,
    $$h_w(q)=\prod\limits_{k=1}^n[L_n(w)_k+1]_q.$$
\end{prop}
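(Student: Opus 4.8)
The plan is to analyze the structure of a $312$-avoiding permutation $w$ directly and show that its Lehmer code $L_n(w)$ records, up to a permutation of the entries and the insertion of zeros, the parts of the dual partition $\lambda^*_w$. First I would recall that by Theorem \ref{teorema cocodice} the code entry $L_n(w)_k$ counts the indices $i < k$ with $w^{-1}(i) > w^{-1}(k)$, which is precisely the number of elements covering $k$ (or at least the indegree of $k$) in a suitable Hasse-type relation associated with $P_w$. More concretely, for $312$-avoiding permutations the poset $P_w=([n],\leq_w)$ has a particularly rigid shape: the relation $i <_w k$ with $i<k$ and $w^{-1}(i)>w^{-1}(k)$ is governed by the descents of $w^{-1}$, and $312$-avoidance forces each $k$ to have at most one ``new'' incomparability of a controlled type. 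I would make this precise by translating $312$-avoidance of $w$ into a statement about $w^{-1}$ (it avoids the reverse-inverse pattern, i.e. $w^{-1}$ avoids $231$) and then reading off how the values $L_n(w)_k$ accumulate.

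The key steps, in order, are: (1) reduce to a statement purely about the code vector $(L_n(w)_1,\dots,L_n(w)_n)$ using Theorem \ref{teorema cocodice}; (2) show that for a $312$-avoiding $w$ the poset $P_w$ has Hasse diagram a disjoint union of chains (this is where $312$-avoidance is essential and where I expect to invoke or re-prove the forest structure, noting that $312$-avoiding permutations are in particular forest-like), so that $C(P_w)$ is easy to compute — each chain of length $m$ in $P_w$ contributes exactly one saturated chain of each rank $0,1,\dots,m$, hence $\rho_i = \#\{\text{chains of }P_w\text{ of length}\ge i\}$; (3) conclude that $\lambda_w = (\rho_{m_{P_w}},\dots,\rho_1)$ is the partition whose conjugate $\lambda^*_w$ has parts equal to the chain lengths of $P_w$; (4) match chain lengths of $P_w$ with the nonzero entries of $L_n(w)$: an element $k$ lies at height $L_n(w)_k$ in its chain, so the maximal element of each chain contributes its length as an entry of the code, while all other positions contributing to the same chain give smaller values already counted — so $\{L_n(w)_k : k\in[n]\}\cup\{0\}$ equals $\{\lambda_h : h\in[\lambda]\}\cup\{0\}$ as sets; (5) derive the product formula for $h_w(q)$: since $w$ is $312$-avoiding it is $L_n$-principal (a fact to be used from the surrounding discussion on Kempf elements), hence $h_w(q)=\prod_i [L_n(w)_i+1]_q$, and this product equals $\prod_{i\in[E(w)]}[E(w)_i+1]_q$ because the multiset $\{L_n(w)_i\}$ and $\{E(w)_i\}\cup\{0,\dots,0\}$ agree after deleting zeros (which contribute factors $[1]_q=1$).

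The main obstacle I anticipate is step (4): carefully establishing the bijection between the parts of $\lambda^*_w$ and the distinct nonzero code values, rather than merely between multisets. The subtlety is that several entries of $L_n(w)$ can repeat, and several chains of $P_w$ can share a length, so I must track the combinatorics of which index $k$ sits at which height in which chain. I would handle this by an explicit description: for a $312$-avoiding permutation, read $w$ left to right and group positions into maximal increasing ``runs'' dictated by the left-to-right minima of $w^{-1}$ (equivalently the structure forced by avoiding $231$ in $w^{-1}$); each such run is exactly one chain of $P_w$, its length is the corresponding part of $\lambda^*_w$, and the code value at position $k$ is its rank within the run. An induction on $n$ (peeling off $n$ or the position $w^{-1}(n)$, mirroring the induction in the proof of Theorem \ref{teorema cocodice}) should make this bookkeeping clean. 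Once the set equality $\{\lambda_h\}\cup\{0\}=\{L_n(w)_k\}$ is in hand, the product-formula consequence is immediate since extra zeros contribute trivial factors.
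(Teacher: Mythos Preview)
Your step (2) contains a genuine error: the Hasse diagram of $P_w$ is \emph{not} in general a disjoint union of chains for $312$-avoiding $w$. Take $w=231\in S_3$: then $w^{-1}=312$, and $P_w$ has $1<_w 2$ and $1<_w 3$ with $2,3$ incomparable---a ``V'', not a union of chains. What $312$-avoidance actually buys is weaker: if $i<j<k$ with $i<_w k$, $j<_w k$ and $i,j$ incomparable in $P_w$, then at the positions $w^{-1}(k)<w^{-1}(i)<w^{-1}(j)$ the values $k,i,j$ form a $312$ pattern. Hence every element of $P_w$ has at most one lower cover, i.e.\ each principal down-set $\{i:i\leq_w k\}$ is a chain; but upper covers are unconstrained (two incomparable elements above a given one produce a $231$ pattern, which is allowed). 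Your computation in steps (3)--(4) of $\lambda_w$ via ``chain lengths of the components'' therefore does not go through. The paper exploits precisely the down-set-is-a-chain property: the elements below $k$ in $P_w$ form a chain of length $L_n(w)_k$, so the saturated chains of $P_w$ with top element $k$ realize each of the ranks $1,\ldots,L_n(w)_k$ exactly once, which forces every nonzero value $L_n(w)_k$ to occur as a part of $\lambda^*_w$.

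Step (5) also hides a circularity: you appeal to $L_n$-principality of $312$-avoiding permutations, but in the paper that implication (Proposition~\ref{teorema catalani}, $3)\Rightarrow 1)$) is deduced \emph{from} the present proposition. The non-circular route, which the paper takes, is to combine the set equality just established with Gasharov's factorization $h_w(q)=\prod_{i\in[E(w)]}[E(w)_i+1]_q$ for smooth $w$ and the identification $E(w)=\lambda^*_w$.
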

\begin{proof}
 If $w\in S_n$ avoids $312$ then $w\in \Sm_n$. If $w=e$ the result is trivial. Let $w\neq e$. By definition of the poset $P_w$, it is clear that for each inversion $(i,j)$ of $w$ we have a saturated chain $c_{(i,j)}$. Let $k \in [n]$ and define $I_k(w):=\{(i,k): i<k, w^{-1}(i)>w^{-1}(k)\}$. For a $312$--avoiding permutation $w$, if $(i,k),(j,k) \in I_k(w)$, for some $i<j$, then in the poset of saturated chains $C(P_w)$ we have that $\rho(c_{(i,k)})>\rho(c_{(j,k)})$. This implies that $|I_k(w)|$ is a component of $\lambda_w^*$, for all $k \in [n]$ such that $I_k(w) \neq \varnothing$. Since $|I_k(w)|=L_n(w)_k$ for all $k\in [n]$, the first statement follows. The second one is immediate from the fact that $E(w)=\lambda^*_w$.
\end{proof}

We proceed now to define \emph{Fubini words} and \emph{lazy Fubini words}. Fubini words are enumerated by Fubini numbers (see \cite[Section 3]{Wilson}). Lazy Fubini words are enumerated by Catalan numbers and they arise as Lehmer codes of $L_n$-principal permutations, as stated in Proposition \ref{teorema catalani}. 
\begin{dfn}
    Let $k\in \N$. An element $x\in \{0,\ldots,k-1\}^k$ is a \emph{Fubini word} if $\{x_1,\ldots, x_k\}=\{0,\ldots, \max(x)\}$. 
\end{dfn} As ordered set partitions, Fubini words provide bases of generalized coinvariant algebras, as treated in \cite{Haglung}.
Fubini words have a geometric interest as they realize indexing sets for stratifications of spaces of line configurations, see \cite{Rhoades}. 
Since the dual of a partition into distinct parts is a Fubini word, we obtain the following corollary of Proposition \ref{prop foreste}. 
\begin{cor} \label{cor foreste}
Let $w\in \Sm_n$. Then the dual partition $\lambda_w^*$, with a $0$ prefix if $\lambda_w^*\neq (0)$, is a Fubini word.
\end{cor}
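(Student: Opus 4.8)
The plan is to isolate what smoothness buys — namely that $\lambda_w$ is a partition into \emph{distinct} parts — and then to verify the Fubini-word property by direct inspection of the conjugation rule for partitions.

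First I would dispose of the case $w=e$: here $\lambda_e=(0)$ is self-dual, so $\lambda_e^*=(0)$, which is itself a Fubini word (of length one, as $\{0\}=\{0,\ldots,0\}$), and there is nothing more to check. So assume $w\in\Sm_n\setminus\{e\}$. Since $w$ is smooth, the Hasse diagram of $P_w$ is a forest by \cite{Forest-like permutations}, so Proposition~\ref{prop foreste}, applied to $P_w$, yields $\rho_0>\rho_1>\cdots>\rho_{m_P}$. Because $\lambda_w=(\rho_{m_P},\ldots,\rho_1)$ is a genuine partition of $\ell(w)$ by \eqref{eq partizione}, its parts are positive, and by the strict inequalities just displayed they are pairwise distinct; hence $\lambda_w$ is a partition into exactly $m_P$ distinct parts.

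Next I would record the elementary fact that the dual of a partition into $r$ distinct parts has set of parts exactly $\{1,\ldots,r\}$. For $\mu=(\mu_1<\cdots<\mu_r)$ the defining relation $m_k(\mu^*)=\mu_{r-k+1}-\mu_{r-k}$ (with $\mu_0:=0$) exhibits, for each $k\in[r]$, a strictly positive difference — a gap between two consecutive distinct parts, or $\mu_1-0$ when $k=r$ — so every value in $[r]$ occurs in $\mu^*$ with multiplicity at least one, while $m_k(\mu^*)=0$ for $k>r$. (Equivalently, conjugation is the familiar bijection between partitions with distinct parts and partitions whose multiset of parts is an initial interval of $\N$.)

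Applying this with $\mu=\lambda_w$, $r=m_P$, the partition $\lambda_w^*$ has part-set $\{1,\ldots,m_P\}$ and, since each of these $m_P$ multiplicities is at least one, has at least $m_P$ parts. Writing $\lambda_w^*$ weakly increasingly and prepending a $0$ gives a word $x$ of length at least $m_P+1$ whose entry-set is $\{0,1,\ldots,m_P\}=\{0,\ldots,\max(x)\}$; in particular every entry lies in $\{0,\ldots,(\text{length of }x)-1\}$, so $x$ is a Fubini word. There is no real obstacle: the only step with genuine content is the appeal to Proposition~\ref{prop foreste} together with the forest structure of $P_w$, after which everything reduces to routine bookkeeping about partition conjugation.
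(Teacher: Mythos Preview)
Your proof is correct and follows essentially the same approach as the paper: the paper's argument is the single sentence ``Since the dual of a partition into distinct parts is a Fubini word, we obtain the following corollary of Proposition~\ref{prop foreste},'' and you have simply supplied the details (the $w=e$ case, why $\lambda_w$ has distinct parts via the forest structure of $P_w$ and Proposition~\ref{prop foreste}, and the explicit conjugation bookkeeping). There is no substantive difference in strategy.
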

The next definition introduce the notion of lazy Fubini word.
\begin{dfn} \label{prop fubini}
    An element $x\in \{0,\ldots,k-1\}^k$ is a \emph{lazy Fubini word} if $x_1=0$ and $x_{i+1}-x_i \leq 1$, for all $i\in [k-1]$.
\end{dfn}

Notice that lazy Fubini words are Fubini words. We denote by $\hat{\mathrm{F}}_k$ the set of lazy Fubini words in $\{0,\ldots,k-1\}^k$.
We have that $\hat{\mathrm{F}}_k \subseteq \prod_{i=0}^{k-1}\{0,\ldots,i\}$. Moreover, by Definition \ref{prop fubini} and \cite[Exercise 6.19(u)]{StaEC2}, lazy Fubini words are enumerated by Catalan numbers, i.e. $|\hat{\mathrm{F}}_k|=C_k$, for all $k\in \N$. 

\begin{ex}
    We have that $\hat{\mathrm{F}}_3=\{(0,0,0),(0,1,0),(0,0,1),(0,1,1),(0,1,2)\}$ and 
   \begin{eqnarray*}
     \hat{\mathrm{F}}_{4} &=&   \{(0,0,0,0),(0,1,0,0),(0,0,1,0),(0,0,0,1),(0,1,1,0), \\
     && (0,1,0,1), (0,0,1,1), (0,1,1,1),(0,1,2,0),(0,0,1,2), \\
     && (0,1,1,2),(0,1,2,1),(0,1,2,2),(0,1,2,3)\}.
   \end{eqnarray*}
    The tuple $(0,2,1,1)$ is a Fubini word which is not lazy. 
\end{ex}
Recall from Section \ref{sezione principale} that an element $w\in S_n$ is $L$-principal for a Lehmer code $L$ of $S_n$ if $[e,w]_{\mathrm{Lh}}=[e,w]$ as sets, and that we denote by $\Pr(L)\subseteq S_n$ the set of $L$-principal elements. The content of the following proposition is essentially known in the context of Kempf varieties, see \cite{Huneke}. Since a unifying result is not available in our setting, we think it is useful to state it here with a short proof. 

\begin{prop} \label{teorema catalani} Let $n\in \N$ and $w\in S_n$. The following are equivalent:
\begin{enumerate}
    \item[$\mathrm{1)}$] $w$ is $L_n$-principal;
    \item[$\mathrm{2)}$] $L_n(w)$ is a lazy Fubini word;
    \item[$\mathrm{3)}$] $w$ avoids the pattern $312$.
\end{enumerate}
    In particular $|\Pr(L_n)|=C_n$, where $C_n$ is the $n$-th Catalan number. 
\end{prop}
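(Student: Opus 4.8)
The plan is to prove the three equivalences in a cycle, say $3) \Rightarrow 2) \Rightarrow 1) \Rightarrow 3)$, and then deduce the count from the fact that $312$-avoiding permutations are counted by the Catalan numbers (or, equivalently, from $|\hat{\mathrm{F}}_n| = C_n$, which the excerpt already records).

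\medskip

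\noindent\textbf{Step $3) \Rightarrow 2)$.} Assume $w \in S_n$ avoids $312$. Using the formula $L_n(w)_k = |\{i \in [n] : i < k,\ w^{-1}(i) > w^{-1}(k)\}|$ from Theorem \ref{teorema cocodice}, I would show directly that $L_n(w)_1 = 0$ (trivially) and $L_n(w)_{k+1} - L_n(w)_k \leq 1$ for all $k$. The key observation is that $I_{k+1}(w) := \{i \le k : w^{-1}(i) > w^{-1}(k+1)\}$ and $I_k(w) := \{i < k : w^{-1}(i) > w^{-1}(k)\}$ are related by: if $i \in I_{k+1}(w)$ and $i \neq $ (the position forcing an inversion with $k$), then $312$-avoidance forces $i \in I_k(w) \cup \{k\}$. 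More precisely, suppose $i < j \leq k$ both satisfy $w^{-1}(i) > w^{-1}(k+1)$ and $w^{-1}(j) > w^{-1}(k+1)$ but $i \notin I_k(w)$, i.e. $w^{-1}(i) < w^{-1}(k)$; then the values at positions $w^{-1}(i) < \min(w^{-1}(j), w^{-1}(k))$ exhibit a $312$ pattern (the value $i$ comes first and is smallest, then larger values $j$ or $k$, then... ) — I need to be careful with the exact triple, but the upshot is that at most one element of $I_{k+1}(w)$ can fail to lie in $I_k(w) \cup \{k\}$, which gives $L_n(w)_{k+1} \le L_n(w)_k + 1$. This pattern-avoidance bookkeeping is the main technical obstacle: getting the inequality from the right $312$ witness requires care.

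\medskip

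\noindent\textbf{Step $2) \Rightarrow 1)$.} Assume $L_n(w)$ is a lazy Fubini word. I want $[e,w]_{\mathrm{Lh}} = [e,w]$, and since $[e,w]_{\mathrm{Lh}} \subseteq [e,w]$ always holds (as $L_n^{-1}$ is order-preserving, by Corollary \ref{corollario codice} and Definition \ref{def codici di L}), it suffices to show $v \leq w \Rightarrow L_n(v) \leq L_n(w)$ componentwise. The cleanest route: show that for a lazy Fubini word $c = L_n(w)$, the order ideal generated by $c$ in $\prod_{i=0}^{n-1}\{0,\ldots,i\}$ pulls back under $L_n^{-1}$ to exactly $[e,w]$; equivalently, $|[e,w]| = \prod_k (c_k+1)$. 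One shows that every $c' \le c$ in the product of chains is itself realized as $L_n(v)$ for some $v \le w$ — I would build $v$ explicitly from $c'$ by the standard ``code to permutation'' algorithm and verify $v \le w$ via the subword property, or alternatively argue by downward induction removing a maximal coordinate. The lazy condition $c_{k+1} - c_k \le 1$ is exactly what guarantees that decrementing coordinates keeps one inside the image of $L_n$ restricted to $[e,w]$.

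\medskip

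\noindent\textbf{Step $1) \Rightarrow 3)$.} Assume $w$ is $L_n$-principal but contains a $312$ pattern, say at positions $a < b < c$ with $w(b) < w(c) < w(a)$. I would produce an element $v \le w$ with $L_n(v) \not\le L_n(w)$, contradicting principality. The natural candidate is obtained by a single transposition or a short modification of $w$ that deletes the offending pattern in a way that \emph{increases} some coordinate of the Lehmer code while staying below $w$ in Bruhat order; concretely, swapping $w(a)$ and $w(b)$ (which lowers length, hence gives $v \triangleleft w$ or at least $v < w$) changes the inversion count at column $w(a)$ or $w(c)$ so that $L_n(v)_{k} > L_n(w)_k$ for the relevant $k$. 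Verifying the strict inequality in the code is the delicate point here. Finally, having established $1) \Leftrightarrow 3)$, the count $|\Pr(L_n)| = C_n$ follows since $312$-avoiding permutations of $[n]$ are enumerated by the $n$-th Catalan number (equivalently, via $2)$ and $|\hat{\mathrm{F}}_n| = C_n$ noted before Proposition \ref{teorema catalani}).
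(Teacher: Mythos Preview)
Your cycle runs opposite to the paper's ($3 \Rightarrow 2 \Rightarrow 1 \Rightarrow 3$ versus the paper's $1 \Rightarrow 2 \Rightarrow 3 \Rightarrow 1$), and the implication $2) \Rightarrow 1)$ has a genuine gap. You correctly observe that $[e,w]_{\mathrm{Lh}} \subseteq [e,w]$ is automatic, so the content is the reverse inclusion, or equivalently the cardinality equality $|[e,w]| = \prod_k(L_n(w)_k+1)$. But your ``cleanest route'' --- realizing each $c' \le c$ as $L_n(v)$ for some $v \le w$ --- only re-establishes the trivial inclusion: it says nothing about why every Bruhat-smaller $v \le w$ must satisfy $L_n(v) \le L_n(w)$. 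The paper closes this (in its $3 \Rightarrow 1$) by invoking Proposition~\ref{prop 312}, which rests on Gasharov's factorization of the Poincar\'e polynomial for smooth permutations and yields $|[e,w]| = \prod_k(L_n(w)_k+1)$ directly; cardinality then forces equality of the two intervals. Without that external input, or a genuine substitute, your step is incomplete.

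Your $1) \Rightarrow 3)$ also fails as written. For $w = 312 \in S_3$ (positions $a=1$, $b=2$, $c=3$), swapping the values at positions $a,b$ gives $v = 132$ with $L_3(v) = (0,0,1) \le (0,0,2) = L_3(w)$, so this $v$ is \emph{not} a witness to non-principality. The paper's witness (in its $1 \Rightarrow 2$) is $s_i w$ for the value $i$ at which the code jumps by more than one; for $w=312$ this is $s_2 w = 213$, and indeed $L_3(213) = (0,1,0) \not\le L_3(w)$. In position terms, the swap that works is at positions $a$ and $c$, not $a$ and $b$. Finally, your $3) \Rightarrow 2)$ is morally right but misstated: under $312$-avoidance one has $I_{k+1}(w) \subseteq I_k(w) \cup \{k\}$ with \emph{no} exceptions (any $i<k$ with $w^{-1}(k+1) < w^{-1}(i) < w^{-1}(k)$ exhibits a $312$), whereas allowing ``at most one'' failure would only give $L_n(w)_{k+1} \le L_n(w)_k + 2$.
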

\begin{proof}
$1) \Rightarrow 2)$: by contradiction, assume $L_n(w)$ is not lazy Fubini. Hence by Definition \ref{prop fubini} there exists $i\in [n]$ such that $L_n(w)_i < L_n(w)_{i+1}-1$. Then there exists $a<i$ such that $w^{-1}(i+1)<w^{-1}(a)<w^{-1}(i)$.
We have that $s_i\in D_L(w)$ and then $s_iw< w$ in the Bruhat order. But $L_n(s_iw)_i > L_n(w)_i$ and then $s_iw\not \in [e,w]_{\mathrm{Lh}}$, i.e. $[e,w]_{\mathrm{Lh}} \neq [e,w]$, a contradiction.

  $2) \Rightarrow 3)$: by contradiction, assume that $w$ has the pattern $312$. This means that there exist $b<c<a$ with $a,b,c \in [n]$ such that $w^{-1}(a)<w^{-1}(b)<w^{-1}(c)$. Consider $a,b,c$ such that $(w^{-1}(a),w^{-1}(b),w^{-1}(c))$ is lexicographically minimal. If $a=c+1$, then $[L_n(w)]_{a}-[L_n(w)]_{a-1}>1$; by Definition \ref{prop fubini} this is a contradiction. If $a>c+1$, the minimality of $(w^{-1}(a),w^{-1}(b),w^{-1}(c))$ implies that  $w^{-1}(k)>w^{-1}(c)$ for all $c+1 \leq k \leq a-1$. It follows that $[L_n(w)]_{a}-[L_n(w)]_{a-1}>1$, again a contradiction.

  $3) \Rightarrow 1)$: by Proposition \ref{prop 312} we have that $$|[e,w]|=\prod\limits_{i=1}^n (L_n(w)_i+1)=|[e,w]_{\mathrm{Lh}}|.$$ Since $[e,w]_{\mathrm{Lh}}$ is a subposet of $[e,w]$, we obtain that, as sets, $[e,w]_{\mathrm{Lh}}=[e,w]$, i.e. $w$ is $L_n$-principal.


The last assertion of our theorem follows from the fact that $312$-avoiding permutations are enumerated by Catalan numbers (see \cite[Exercise 6.19(ff)]{StaEC2}).
\end{proof}
\begin{oss}
By Proposition \ref{lemma reticolo}, Remark \ref{bruhat come segre} and Proposition \ref{teorema catalani}, we recover the lattice property of the Bruhat order on the set of $312$-avoiding permutations, stated in \cite[Theorem 5.1]{Ferrari}. This is also known as \emph{Dyck lattice} or \emph{Stanley lattice}, see \cite[Corollary 5.1]{Ferrari} and \cite{Bernardi}. \end{oss}

In the following definition, we recall the notion of {\em unimodal permutation}. For more details and results involving this class of permutations see for instance \cite{AL}, \cite{Gannon}, \cite{Thibon}.

\begin{dfn}
    A permutation $w\in S_n$ is \emph{unimodal} if
there exists $i \in \{0,\ldots,n\}$ such that $w(j)<w(j+1)$ for all $j<i$, and $w(j)>w(j+1)$ for all $i<j<n$.  
\end{dfn} We denote by $U_n$ the set of unimodal permutations in $S_n$.
Since unimodal permutations coincide with permutations avoiding $312$ and $213$, Proposition \ref{teorema catalani} implies that every unimodal permutation is $L_n$-principal. In the next proposition we provide a further characterization of unimodal permutations.

\begin{prop} \label{prop unimodale lehmer}
Let $n\in \N$ and $u\in S_n$. The following are equivalent:
\begin{enumerate}
    \item[$\mathrm{1)}$] $u$ is $L_n$-unimodal;
    \item[$\mathrm{2)}$] $L_n(u)$ is a weakly increasing Fubini word;
    \item[$\mathrm{3)}$] $u$ is an unimodal permutation.
\end{enumerate}
\end{prop}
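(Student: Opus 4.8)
The plan is to prove the cyclic chain of implications $1) \Rightarrow 2) \Rightarrow 3) \Rightarrow 1)$, using Theorem \ref{teorema cocodice} (the inversion description of $L_n$), Proposition \ref{teorema catalani} (which identifies $L_n$-principal elements, lazy Fubini words, and $312$-avoiders), Corollary \ref{cor foreste} (which tells us that the dual partition $\lambda_w^*$ of a smooth permutation, with a $0$ prefix, is a Fubini word), and Proposition \ref{prop 312} (which relates $\lambda_w^*$ to the multiset of entries of $L_n(w)$). Throughout, recall that unimodal permutations are exactly the $\{312,213\}$-avoiders, and that $U_n \subseteq \Pr(L_n)$ by Proposition \ref{teorema catalani}, so every $L_n$-unimodal element is in particular $L_n$-principal and $312$-avoiding.

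For $1) \Rightarrow 2)$: if $u$ is $L_n$-unimodal then it is $L_n$-principal, so by Proposition \ref{teorema catalani} the word $L_n(u)$ is lazy Fubini, hence a Fubini word; it remains to see that it is weakly increasing. The idea is to use the definition of $L_n$-unimodality, namely $L_n(u) = \min_{\le_{\lex}} O_u$: among all rearrangements of the entries of $L_n(u)$ that still arise as Lehmer codes of $L_n$-principal elements, $L_n(u)$ is lex-minimal. The key observation is that the weakly increasing rearrangement of the entries of any lazy Fubini word is again a lazy Fubini word (since lazy Fubini words are, up to the forced $0$ prefix, precisely those Fubini words whose sorted form is $0,0,\ldots,1,1,\ldots,2,\ldots$ — sorting can only decrease consecutive gaps below $1$), hence corresponds to an $L_n$-principal permutation by Proposition \ref{teorema catalani}, and it is lex-smaller than any other rearrangement. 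By lex-minimality this forced rearrangement must equal $L_n(u)$ itself, so $L_n(u)$ is weakly increasing.

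For $2) \Rightarrow 3)$: assume $L_n(u)$ is a weakly increasing Fubini word. Being a weakly increasing word starting at $0$ with consecutive gaps at most $1$ (the Fubini condition forces each value between $0$ and the max to appear, so successive distinct values differ by exactly $1$), it is in particular a lazy Fubini word, hence $u$ avoids $312$ by Proposition \ref{teorema catalani}. To get that $u$ also avoids $213$, suppose for contradiction $u$ contains a $213$ pattern, i.e. there are positions $p < q < r$ with $u(q) < u(p) < u(r)$. Using Theorem \ref{teorema cocodice}, translate this into a statement about the values $a := u(p), b := u(q), c:= u(r)$ and the inverse word $u^{-1}$; the presence of such a pattern should force a strict decrease $L_n(u)_k > L_n(u)_{k+1}$ for a suitable $k$ (roughly, the value $b$ lying to the left of $a$ but right of nothing larger contributes an inversion to $L_n(u)_a$ that is lost at the next index), contradicting weak monotonicity. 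Thus $u$ avoids both $312$ and $213$, so $u$ is unimodal.

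For $3) \Rightarrow 1)$: assume $u$ is unimodal. Then $u$ avoids $312$ (and $213$), so $u \in \Pr(L_n)$ by Proposition \ref{teorema catalani}; it remains to check $L_n(u) = \min_{\le_{\lex}} O_u$. Since $u$ is unimodal, a direct computation from Theorem \ref{teorema cocodice} shows that $L_n(u)$ is weakly increasing: for a unimodal permutation the set of $k$ with $L_n(u)_k > 0$ forms an interval of the final descending run, and along it $L_n(u)_k$ increases by exactly $1$. A weakly increasing word is lex-minimal among all its rearrangements, so in particular $L_n(u) \le_{\lex} v$ for every $v \in O_u$, giving $L_n(u) = \min_{\le_{\lex}} O_u$, i.e. $u$ is $L_n$-unimodal. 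The main obstacle is the bookkeeping in $2) \Rightarrow 3)$: carefully converting pattern containment/avoidance in $u$ into the correct monotonicity statement about the coordinates of $L_n(u)$ via the inversion formula of Theorem \ref{teorema cocodice}, in particular pinning down exactly which index $k$ witnesses the failure of weak monotonicity when a $213$ pattern is present.
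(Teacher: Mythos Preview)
Your treatment of $1)\Leftrightarrow 2)$ is correct and matches the paper almost verbatim: the paper also deduces weak monotonicity of $L_n(u)$ from lex-minimality in $O_u$, and conversely observes that a weakly increasing (hence lazy) Fubini word is automatically lex-minimal among all its rearrangements. Your extra remark that the sorted rearrangement of a lazy Fubini word is again lazy Fubini is exactly what makes the paper's one-line ``immediately'' work.

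The substantive difference is in $2)\Rightarrow 3)$. The paper does \emph{not} argue by contradiction from a $213$ pattern; it proves directly, by induction on $n$, that $L_n^{-1}$ of a weakly increasing Fubini word is unimodal, using the explicit recursion $L_n(u)_{i+1}\in\{L_n(u)_i,L_n(u)_i+1\}$ according to whether $u^{-1}(i)<u^{-1}(i+1)$ or not. This sidesteps precisely the ``bookkeeping obstacle'' you flag. Your pattern-based route is viable but genuinely incomplete as written: a generic $213$ pattern need \emph{not} produce a descent at an adjacent pair of values. What one actually needs is a $213$ pattern whose outer values are \emph{consecutive integers} $k<k+1$, since from the inversion formula one checks that $L_n(u)_{k+1}<L_n(u)_k$ if and only if $u^{-1}(k)<u^{-1}(k+1)$ and some $i<k$ has $u^{-1}(k)<u^{-1}(i)<u^{-1}(k+1)$. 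To close your gap you would have to show: if $u$ contains any $213$ pattern, then it contains one with consecutive outer values. (This can be done by choosing a $213$ pattern with $c-a$ minimal and tracking the position of the value $a+1$, but it is a real extra lemma, not just bookkeeping.)

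Finally, note that your cycle is slightly redundant: inside $3)\Rightarrow 1)$ you already sketch $3)\Rightarrow 2)$ (the ``direct computation'' that $L_n(u)$ is weakly increasing for unimodal $u$) and then use the $2)\Rightarrow 1)$ argument. The paper organizes this more cleanly as two biconditionals $2)\Leftrightarrow 3)$ and $1)\Leftrightarrow 2)$.
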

\begin{proof}
    First we prove that $2)$ is equivalent to $3)$. Let $u \in U_n$. Since $u$ avoids $312$, by Proposition \ref{teorema catalani} we have that $L_n(u)$ is a Fubini word. 
    Moreover
     $$L_n(u)_{i+1}=\left \{
  \begin{array}{ll}
    L_n(u)_i, & \hbox{if $u^{-1}(i)<u^{-1}(i+1)$;} \\ 
    L_n(u)_{i}+1, & \hbox{if $u^{-1}(i)>u^{-1}(i+1)$,}
  \end{array}\right.$$ for all $i \in [n-1]$. Hence $L_n(u)$ is a weakly increasing Fubini word.
  
  Let $f:=(0,f_2,\ldots, f_n) \in [n-1]^n$ be a weakly increasing Fubini word, i.e. $f_i\leq f_{i+1}$ for all $i\in [n-1]$, and $f_i \leq i-1$, for all $i\in [n]$. We proceed by induction on $n \geq 1$ to prove that $u:=L_n^{-1}(f) \in U_n$. If $n=1$ there is nothing to prove. 
      Let $n>1$. Then $\tilde{u}:=L^{-1}_{n-1}(0,f_2,\ldots,f_{n-1})\in U_{n-1}$ by our inductive hypothesis, because $(0,f_2,\ldots,f_{n-1})$ is still a weakly increasing Fubini word. Since $f$ is a weakly increasing Fubini word, $f_n\in \{f_{n-1}, f_{n-1}+1\}$. If $f_n=f_{n-1}$ we have that $u^{-1}(n)=\tilde{u}^{-1}(n-1)+1$.
      If $f_n=f_{n-1}+1$ we have that $u^{-1}(n)=\tilde{u}^{-1}(n-1)$.
      In both cases $u\in U_n$.

We prove now the equivalence between $1)$ and $2)$. Let $u \in U(L_n)$. Since $u$ is $L_n$-principal, then $L_n(u)$ is a lazy Fubini word by Proposition \ref{teorema catalani}. Since $L_n(u)=\min_{\leq_{\lex}} O_u$ it follows immediately that $L_n(u)$ is a weakly increasing Fubini word. Finally, let $u \in S_n$ such that $L_n(u)$ is a weakly increasing Fubini word; then it is a lazy Fubini word. By Proposition \ref{teorema catalani}, $u$ is $L_n$-principal and $L_n(u)$ is the lexicographically minimum in its orbit because $L_n(u)$ is weakly increasing.
\end{proof}

In the next result we prove that all the possible Poincaré polynomials of smooth permutations in $S_n$ arise as Poincaré polynomials of unimodal permutations. 
\begin{thm} \label{teorema lisce} Let $n\in \N$. Then
    $$\{h_w(q) : w \in \Sm_n\} = \{h_w(q) : w \in U_n\}.$$ In particular,
    $|\{h_w(q) : w \in \Sm_n\}|=2^{n-1}$.
\end{thm}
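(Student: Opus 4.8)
The plan is to prove the two inclusions separately, then extract the counting statement. For the inclusion $\{h_w(q): w \in U_n\} \subseteq \{h_w(q): w \in \Sm_n\}$, I would simply recall that every unimodal permutation avoids $312$ and $213$, hence in particular avoids $3412$ and $4231$, so $U_n \subseteq \Sm_n$; this gives the easy direction for free. The substance is the reverse inclusion: given $w \in \Sm_n$, I must produce a unimodal $u \in U_n$ with $h_u(q) = h_w(q)$.

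For the hard direction I would work through the exponent data. By \cite[Theorem 1.1]{gasharov} (stated in the excerpt), for $w \in \Sm_n$ we have $h_w(q) = \prod_{i \in [E(w)]} [E(w)_i + 1]_q$, where $E(w) = \lambda_w^*$ is a partition into distinct parts (by Proposition \ref{prop foreste}, since the Hasse diagram of $P_w$ is a forest). So $h_w(q)$ is determined by the set of distinct parts of $E(w)$, equivalently by the weakly increasing Fubini word obtained by listing these parts in increasing order with a $0$ prefix (Corollary \ref{cor foreste}). The key step is then: every weakly increasing Fubini word in $\prod_{i=0}^{n-1}\{0,\ldots,i\}$ is realized as $L_n(u)$ for a unique unimodal $u \in U_n$. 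This is exactly the equivalence of $(2)$ and $(3)$ in Proposition \ref{prop unimodale lehmer}. Finally, for such a $u$, Proposition \ref{prop 312} (applicable since $u$ avoids $312$) gives $h_u(q) = \prod_{k=1}^n [L_n(u)_k+1]_q = \prod_{i\in[E(w)]}[E(w)_i+1]_q = h_w(q)$, where in the middle I drop the factors equal to $[1]_q = 1$ coming from the zero entries of $L_n(u)$ and match the nonzero entries with the distinct parts of $E(w) = \lambda_w^*$ via Corollary \ref{cor foreste}. This closes the set equality.

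For the cardinality, I would argue that $u \mapsto h_u(q)$ is injective on $U_n$: by \eqref{equazione pal}, on $\Pr(L_n) \supseteq U_n$ the polynomial $h_u(q) = \prod_i [L_n(u)_i+1]_q$ determines the multiset $\{L_n(u)_i\}$, and since $L_n(u)$ is weakly increasing (Proposition \ref{prop unimodale lehmer}) this multiset determines the tuple $L_n(u)$, hence $u$. So $|\{h_w(q): w \in U_n\}| = |U_n| = |\hat{\mathrm{F}}_n^{\uparrow}|$, the number of weakly increasing Fubini words in $\prod_{i=0}^{n-1}\{0,\ldots,i\}$. Such a word is $(0, f_2, \ldots, f_n)$ with $0 \le f_2 \le \cdots \le f_n$ and $f_i \le i-1$; encoding the positions where the value strictly increases shows there is one for each subset of $\{2,\ldots,n\}$, hence $2^{n-1}$ of them. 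I expect the main obstacle to be the bookkeeping in the middle step of the second paragraph: carefully matching the nonzero entries of the Lehmer word $L_n(u)$ with the parts of the dual partition $\lambda_w^*$, using Proposition \ref{prop 312} and Corollary \ref{cor foreste}, so that the product of $q$-analogs for $u$ literally equals $h_w(q)$ rather than merely some polynomial with the same degree.
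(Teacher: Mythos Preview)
Your proposal is correct and follows essentially the same route as the paper: the easy inclusion via $U_n\subseteq \Sm_n$; the hard inclusion by padding $\lambda_w^*$ with zeros to a weakly increasing Fubini word of length $n$ (the paper uses $n-[\lambda_w]$ zeros rather than a single one, which is exactly the bookkeeping you anticipate), then inverting via Proposition~\ref{prop unimodale lehmer} and matching the products of $q$-analogs via Proposition~\ref{prop 312} and \cite[Remark~2.8]{gasharov}. The only difference is in the last step: the paper simply invokes $|U_n|=2^{n-1}$ and the injectivity of $u\mapsto h_u(q)$ on $U(L_n)$ already noted after \eqref{equazione pal}, whereas you re-derive both by counting weakly increasing Fubini words directly.
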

\begin{proof}
The inclusion $\{h_w(q) : w \in U_n\} \subseteq \{h_w(q) : w \in \Sm_n\}$ is trivial because every unimodal permutation avoids the patterns $3412$ and $4231$.

 Let $w\in \Sm_n$. Consider the dual partition $\lambda^*_w$ of the partition $\lambda_w$ defined in \eqref{eq partizione}. Define $\widetilde{\lambda}_w$ equal to $\lambda^*_w$ if $w=e$ and to $\lambda^*_w$, to which we added as a prefix $n-[\lambda_w]$ zeroes, if $w\neq e$.  By  Corollary \ref{cor foreste}, $\widetilde{\lambda}_w$ is a weakly increasing Fubini word. 
By Proposition \ref{prop unimodale lehmer}, 
$u:=L^{-1}_n(\widetilde{\lambda}_w)$ is a unimodal permutation.  By \cite[Remark 2.8]{gasharov} and Proposition \ref{prop 312}, $h_w(q)=h_u(q)$.
      The last assertion is straightforward because $L_n$ is a bijection and $|U_n|=2^{n-1}$. \end{proof}

Unimodal permutations of length $k$ in $S_n$ are in bijection with partitions $(\lambda_1,\ldots,\lambda_r)$ of $k$ such that $\lambda_1<\ldots<\lambda_r \leq n-1$, see \cite[Theorem 10]{AL}. We denote by $\Lambda$ this bijection and then by $\Lambda(u)$ the partition associated to $u \in U_n$. We recall how this bijection works. We let $\Lambda(e):=(0)$. Given $u \in U_n \setminus \{e\}$, let $j:=u^{-1}(n)$. Then $$\Lambda(u)=(n-u(j+1),\ldots,n-u(n)).$$

Conversely, given a partition $\lambda=(\lambda_1,\ldots,\lambda_r)$ of $k$, such that $\lambda_1<\ldots<\lambda_r \leq n-1$, consider the set $\{t_1,\ldots,t_{n-r}\}:=[n] \setminus \{n-\lambda_1,\ldots,n-\lambda_r\}$ and assume that $t_1<\ldots<t_{n-r}$. Then $$\Lambda^{-1}(\lambda):=t_1 \ldots t_{n-r} \, \, n-\lambda_1 \ldots n-\lambda_r.$$ 

The last result of the paper allows to obtain the partition in \eqref{eq partizione}, for unimodal permutations, in a considerably simplified way by using the bijection $\Lambda$. 

\begin{prop} \label{roba da annals}
    Let $u\in U_n$. Then $\Lambda(u)=\lambda_u$ and 
    $$(\underbrace{0,\ldots,0}\limits_{u(n)},\Lambda(u)^*_1,\ldots,\Lambda(u)^*_s)=L_n(u),$$
where $s=n-u(n)$ is the number of parts of $\Lambda(u)^*$.
\end{prop}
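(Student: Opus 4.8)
The plan is to prove the two assertions in the order they are stated, relying on the combinatorial description of the bijection $\Lambda$ recalled just above the statement and on the formula for $L_n$ from Theorem \ref{teorema cocodice}. First I would fix $u \in U_n \setminus \{e\}$, set $j := u^{-1}(n)$, and write $u$ explicitly in one-line notation: $u$ is increasing on $1,\dots,j$ and decreasing on $j,\dots,n$, with $u(j)=n$. In particular the values $u(j+1) > u(j+2) > \dots > u(n)$ are exactly the values that appear "on the descending side", and $\Lambda(u) = (n-u(j+1),\dots,n-u(n))$ by definition. The key observation is that the poset $P_u = ([n],\leq_u)$ has a very simple shape for a unimodal $u$: since $u$ avoids $312$ and $213$, the only inversions of $u^{-1}$ (equivalently, the comparabilities $i <_u k$, meaning $i<k$ and $u^{-1}(k)<u^{-1}(i)$) are between values lying on the descending part, and the Hasse diagram of $P_u$ is in fact a single chain (a "broom" degenerates here), or more precisely a forest each of whose components is a chain. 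I would make this precise and then compute, directly from the definition of $C(P_u)$ and its rank numbers $\rho_i$, that $(\rho_{m_P},\dots,\rho_1)$ is exactly the partition $\Lambda(u)$; this gives $\lambda_u = \Lambda(u)$.

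For the second assertion I would use Theorem \ref{teorema cocodice}: $L_n(u)_k = |\{i \in [n] : i<k,\ u^{-1}(i) > u^{-1}(k)\}|$. For $k$ with $u^{-1}(k) \leq j$ (i.e.\ $k \leq u(j)=n$ lying on the ascending side, equivalently $k \in \{u(1),\dots,u(j)\}$), there is no smaller value $i$ occurring later, so $L_n(u)_k = 0$; there are exactly $j$ such indices, but more relevantly the smallest $u(n)$ of the values $k=1,2,\dots$ all have $L_n(u)_k=0$, which accounts for the $u(n)$ leading zeros (here one checks $\{1,\dots,u(n)\}$ lies on the ascending side since $u(n)$ is the minimum of the descending values and $u$ is increasing below $j$). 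For $k$ on the descending side, $L_n(u)_k$ counts how many values smaller than $k$ appear to its right, and since the descending part lists its values in strictly decreasing order of position, this count is governed by the gaps $u(t)-u(t+1)$, which are precisely the multiplicities recorded by the conjugate partition $\Lambda(u)^*$. I would organize this by reading off, for each $k$, that $L_n(u)_k$ equals the number of parts of $\Lambda(u)$ that are $\geq n-k$, which is exactly the value $\Lambda(u)^*$ assigns at that position; combined with Proposition \ref{prop unimodale lehmer} (which says $L_n(u)$ is a weakly increasing Fubini word) this pins down the tuple as $(0,\dots,0,\Lambda(u)^*_1,\dots,\Lambda(u)^*_s)$ with $s = n - u(n)$.

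In fact a cleaner route to the second formula is available and I would prefer it: by Proposition \ref{prop 312} (applicable since a unimodal $u$ avoids $312$) the multiset $\{L_n(u)_k : k \in [n]\}$ equals $\{\lambda_h : h \in [\lambda]\} \cup \{0\}$ where $\lambda = \lambda_u^* = E(u)$; by the first assertion $\lambda_u = \Lambda(u)$, so $\lambda = \Lambda(u)^*$. Then Proposition \ref{prop unimodale lehmer} tells us $L_n(u)$ is moreover \emph{weakly increasing}, so the multiset determines the tuple uniquely once we know the number of zeros. The number of zero entries is $|\{k : L_n(u)_k = 0\}| = n - |\Lambda(u)^*|$-many? — no: it is $n$ minus the number of nonzero parts of $\Lambda(u)^*$, i.e.\ $n - s$; and one checks this equals $u(n)$ by the ascending-side argument above. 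Assembling: $L_n(u) = (0,\dots,0,\Lambda(u)^*_1,\dots,\Lambda(u)^*_s)$ with $u(n)$ zeros and $s = n-u(n)$, as claimed.

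The main obstacle I anticipate is the bookkeeping in identifying $\lambda_u$ (defined through the poset $C(P_w)$ of saturated chains of the permutation poset $P_w$) with the partition $\Lambda(u)$ (defined directly from the one-line notation): one must verify that, for a unimodal permutation, the permutation poset $P_u$ is order-isomorphic to a disjoint union of chains whose lengths are exactly the parts of $\Lambda(u)$ (so that the rank numbers $\rho_i$ of $C(P_u)$ are the conjugate counts), and one must be careful about the off-by-one conventions between parts, conjugates, and the zero-prefix normalization $\widetilde{\lambda}_w$ used in the proof of Theorem \ref{teorema lisce}. Once the structure of $P_u$ is correctly described, both assertions reduce to elementary partition/conjugation arithmetic together with the two cited propositions.
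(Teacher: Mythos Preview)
Your structural claim about $P_u$ is false, and this breaks your proof of the first assertion. You assert that for a unimodal permutation $u$ the poset $P_u$ is a disjoint union of chains whose lengths are the parts of $\Lambda(u)$. Take $u=1342\in U_4$: here $u^{-1}=1324$, the only comparabilities are $2<_u 3$ and $2<_u 4$, and $3,4$ are incomparable; so $P_u$ consists of an isolated point $1$ together with a ``cherry'' (two maximal elements covering a common minimum). Similarly, for $u=2431$ one finds the Hasse diagram of $P_u$ is a path $1\!-\!3\!-\!4$ with a leaf $2$ attached at $1$. In general $P_u$ for unimodal $u$ is a caterpillar tree, not a union of chains, so your computation of $\rho_i$ from the claimed chain decomposition does not go through. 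One could still compute the $\rho_i$ directly from this caterpillar structure, but you have not done so, and your ``cleaner route'' to the second assertion explicitly invokes the first, so as written both parts collapse.

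The paper sidesteps this difficulty by reversing the order of the two assertions. It first proves the Lehmer-code formula directly: one checks $L_n(u)_i=0$ for $i\leq u(n)$ and $L_n(u)_{u(n)+1}\neq 0$, and then, for each $0<k\leq r$ (with $r=n-u^{-1}(n)$), the unimodality of $u$ gives $|\{i:L_n(u)_i=k\}|=u(n-k)-u(n-k+1)=\lambda_{r-k+1}-\lambda_{r-k}=m_k(\Lambda(u)^*)$. This pins down $L_n(u)$ as the weakly increasing tuple $(0,\ldots,0,\Lambda(u)^*_1,\ldots,\Lambda(u)^*_s)$ with $u(n)$ zeros. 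Only then is $\Lambda(u)=\lambda_u$ deduced, not from the structure of $P_u$, but by combining two facts: Gasharov's result that $\lambda_u^*$ gives the exponents of $u$, and Proposition~\ref{prop 312} that the nonzero entries of $L_n(u)$ also give the exponents. Since the second assertion identifies those entries with $\Lambda(u)^*$, one gets $\Lambda(u)^*=\lambda_u^*$ and hence $\Lambda(u)=\lambda_u$. If you want to salvage your order of argument, you would need an honest computation of the $\rho_i$ for the caterpillar $P_u$; otherwise, adopt the paper's order.
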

\begin{proof} Let $\Lambda(u)=(\lambda_1,\ldots,\lambda_r)$ and $\Lambda(u)^*=(\mu_1,\ldots,\mu_s)$.
    We prove that  $(0,\ldots,0,\mu_1,\ldots,\mu_s)$ is equal to $L_n(u)$. If $\ell(u)=0$ we have nothing to show. Let $\ell(u)>0$.
    It is clear that $L_n(u)_i=0$ for all $i\leq u(n)$ and $[L_n(u)]_{u(n)+1}\neq 0$. We have that $s=\lambda_r=n-u(n)$ and the number of zeros in $L_n(u)$ is $u(n)=n-s$.  

    Let $0<k\leq n-u^{-1}(n)=r$; since $u$ is unimodal, we have that $$|\{i\in [n]:L_n(u)_i=k\}|=u(n-k)-u(n-k+1).$$ Hence, by defining $\lambda_0:=0$, we obtain 
    \begin{eqnarray*}
        u(n-k)-u(n-k+1)&=&n-u(n-k+1)-(n-u(n-k)) \\
        &=& \lambda_{r-k+1}-\lambda_{r-k}
    \end{eqnarray*}  and this concludes the proof of the second equality by definition of dual partition. 

    By  \cite[Remark 2.8]{gasharov} we have that $\lambda_{u}^*$ provides the exponents of $u$. By Proposition \ref{prop 312} also the Lehmer code $L_n(u)$ provides the exponents of $u$. Hence, by the equality proved above,
    $\Lambda(u)^*=\lambda_{u}^*$ and then $\Lambda(u)=\lambda_{u}$.
\end{proof}

By using Theorem \ref{teorema lisce} and Proposition \ref{roba da annals}, it is possible to list easily all the possible Poincaré polynomials of smooth permutations in $S_n$, as the following example shows.

\begin{ex} \label{esempio finale}
    The partitions corresponding to the elements of $U_4$ are
    $$(1,2,3), (2,3), (1,3), (1,2), (3),  (2), (1), (0).$$
    Then the elements of the set $\{h_w(q):w\in \Sm_4\}$ are
$$[2]_q[3]_q[4]_q, \,\, [2]_q[3]_q[3]_q, \,\, [2]_q[2]_q[3]_q, \,\, [2]_q[3]_q, \,\, [2]_q[2]_q[2]_q, \,\, [2]_q[2]_q, \,\, [2]_q, \,\, [1]_q.$$
\end{ex}

\begin{oss}
    By \cite[Theorem 1.6]{Nigro} it follows that $\{h_w(q):w \in \Sm_n\}=\{h_w(q):w \in \Pr(L_n)\}$. Hence Theorem \ref{teorema lisce} can be seen as a refinement of this equality, which is a special case of a conjecture due to Haiman \cite[Conjecture 3.1]{Haiman}. Notice that Haiman's conjecture is false in its full generality, see \cite[Theorem 1.8]{Nigro}.
    
\end{oss}

{\bf Acknowledgements:}
The authors would like to thank the organizers of the Workshop “Bruhat order: recent developments and open problems”, held
at the University of Bologna on April 2024 where part of our research was carried on.

\end{document}